\documentclass[a4paper,reqno,10pt]{amsart}

\usepackage {amsmath,amssymb}
\usepackage{amsfonts}
\usepackage{amsthm}
\usepackage{tikz}
\usepackage[encapsulated]{CJK}
\usepackage{graphicx,color}
\usepackage [latin1]{inputenc}
\usepackage {times}
\usepackage {float}
\usepackage{hyperref}
\usepackage{amscd}
\usepackage[shortalphabetic,nobysame]{amsrefs}

\DeclareGraphicsRule{.pdftex}{pdf}{*}{} 

\usetikzlibrary{matrix,arrows}

\numberwithin{equation}{section}
\numberwithin{figure}{section}

\DeclareMathOperator{\qqq}{q}
\DeclareMathOperator{\jj}{j}
\DeclareMathOperator{\trop}{B}
\DeclareMathOperator{\Ima}{Im}
\DeclareMathOperator{\cl}{cl}
\DeclareMathOperator{\rank}{r}

\DeclareMathOperator {\Star}{Star}

\DeclareMathOperator {\codim}{codim}

\DeclareMathOperator {\id}{id}

\newcommand{\mn}{\mathcal{M}_n}
\newcommand{\mnlab}{\mathcal{M}_n^{\text{\rm{lab}}}}
\newcommand{\mndelta}{\mathcal{M}_{n+\betrag{\Delta}}\times\R^r}
\newcommand{\bfan}{\mathcal{B}(M)}
\newcommand{\mf}{\mathcal{B}}

\newcommand{\diagonal}{\Delta_{B(M)}}
\newcommand{\Zy}{Z}
\newcommand{\betrag}[1]{\left|#1\right|}
\newcommand{\R}{\mathbb R}

\newcommand{\Z}{\mathbb Z}
\newcommand{\T}{\mathbb T}
\newcommand{\F}{\mathcal F}
\newcommand{\G}{\mathcal G}
\newcommand{\curlyx}{\mathcal{X}}
\newcommand{\curlyc}{\mathcal{C}}

\newcommand{\curlyb}{\mathcal{B}}

\newcommand{\diagM}{\Delta_M}
\newcommand {\PP}{{\mathbb P}}

\newcommand {\dcup}{\hspace{-0.7ex}
                      \begin{array}{c}
                        \\[-3.12ex]
                        \cdot\\[-2.3ex]
                        \cup
                      \end{array}
                      \hspace{-0.7ex}
                   }

\newcommand{\arxiv}[1]{%
  \href{http://arxiv.org/abs/#1}{arxiv:#1}%
}

\parindent 0mm
\parskip 1ex 

\newtheorem {theorem}{Theorem}[section]
\newtheorem {proposition}[theorem]{Proposition}
\newtheorem {lemma}[theorem]{Lemma}
\newtheorem {defthm}[theorem]{Definition and Theorem}
\newtheorem {corollary}[theorem]{Corollary}
\theoremstyle {definition}
\newtheorem {definition}[theorem]{Definition}
\newtheorem {example}[theorem]{Example}
\theoremstyle {remark}
\newtheorem {remark}[theorem]{Remark}
\newtheorem {convention}[theorem]{Convention}
\newtheorem*{acknowledgement}{Acknowledgement}

\begin{document}

\title{The diagonal of tropical matroid varieties and cycle intersections}

\author {Georges Francois}
\address {Georges Francois, Fachbereich Mathematik, Technische Universit\"{a}t
  Kaiserslautern, Postfach 3049, 67653 Kaiserslautern, Germany}
\email {gfrancois@email.lu}

\author {Johannes Rau}
\address {Johannes Rau, Section de math\'ematiques, Universit\'e de Gen\`eve,
	Case postale 64, CH-1211 Gen\`eve 4, Switzerland}
\email {johannes.rau@unige.ch}

\begin{abstract}
We define an intersection product of tropical cycles on matroid varieties 
(via cutting out the diagonal) and show that it is well-behaved. 
In particular, this enables us to intersect cycles on moduli spaces of 
tropical rational marked curves $\mn$ and $\mnlab(\Delta, \R^r)$. 
This intersection product can be extended to smooth varieties
(whose local models are matroid varieties).
We also study pull-backs of cycles and rational equivalence. 
\end{abstract}

\maketitle

\section{Introduction}
For each loopfree matroid $M$ with ground set $E$ 
there is an associated tropical cycle $\trop(M) \subset \R^E$. It is
a fan with lineality space $\R \cdot (1,\ldots,1)$ whose dimension
is equal to the rank of the matroid. These objects, which we call
\emph{matroid varieties} here, have been studied, among others, by
Sturmfels, Ardila, Klivans, Speyer and Feichtner
\cites{Sturmfels,ardila,Speyer,FS}.
Matroid varieties 
generalise the tropicalisations of classical linear spaces and can therefore be considered as tropical linear spaces. 
In particular, they
are natural candidates for being the local building blocks 
of \emph{smooth} tropical varieties.
Therefore, it has been expected that on such spaces a well-behaved intersection
product of tropical subcycles exists. 
The aim of this article is to construct this intersection product, to analyse
some of its properties and to relate it to other notions such as rational equivalence.
As in \cite[section 9]{AR} for $\R^n$ and in \cite[section 1]{lars} for $L_k^n$, our 
construction is based on finding rational functions on the product $\trop(M)\times\trop(M)$ which cut out the diagonal $\Delta_{\trop(M)}$. 

An intersection product on matroid varieties was presented before by Kristin Shaw (cf.\ \cite{kristin}). 
Her alternative approach uses tropical modifications 
to give a recursive definition.
In particular, the observation that elementary quotients
of matroids correspond to tropical modifications is due to her.
In theorem \ref{compkristin}, we show that both definitions of intersection
products on matroid varieties agree --- therefore, the advantages of
both approaches can be combined. 

In section 3 we show that any matroid variety
contained in a second one can be cut out from the second one by 
explicitly given rational functions. 
Applying this in section 4 to the diagonal of a matroid variety 
sitting in the cartesian product
enables us to construct an intersection product of cycles on $\trop(M)$ having the usual properties.

In our terminology, $\trop(M)$ denotes a 
``tropical affine cone'' with lineality space $L = \R\cdot(1,\ldots,1)$. 
A priori, our intersection product is defined on this cycle. 
Therefore, section 5 is devoted to carrying over this intersection product
to the projectivisation $\trop(M)/L$. This is a mainly technical task.
In section 6, we give a definition of smooth tropical varieties (whose
local models are $\trop(M)/L$) and extend the intersection product to this case.

As an application, in section 7 we identify (on the level of tropical varieties) the 
moduli spaces of tropical rational curves $\mn$ and $\mnlab(\Delta, \R^r)$
with matroid varieties obtained from the complete graph $K_{n-1}$ and hence get
an intersection product on these spaces. 
Finally, in sections 8 resp.\ 9 we study pull-backs of cycles
resp.\ rational equivalence.

We are grateful to Kristin Shaw, Federico Ardila and Andreas Gathmann for many helpful
discussions and comments.

\begin{acknowledgement}
Georges Fran\c{c}ois is supported by the Fonds national de la Recherche (FNR), Luxembourg.
\end{acknowledgement}

\begin{convention}
In the following, unless explicitly told otherwise, all matroids are assumed to be loopfree (that means that each element of the ground set has rank 1). 
\end{convention}

\section{Preliminaries}

We start with recalling the definition of a matroid variety.
We state a few general results about matroid varieties
which are needed in the following. 

Let $M=(E,\curlyb)$ be a loopfree matroid of rank $\rank(M)$
with ground set $E=\{1,\ldots,n\}$. 
It defines a tropical fan cycle $\trop(M)$ of dimension $\rank(M)$ in $\R^n$
whose support set can be described as follows:
To each point $p \in \R^n$ one can associate a matroid $M_p$ whose
bases are the $p$-minimum bases of $M$ (where the $p$-weight of
a basis $B$ is $\sum_{i \in B} p_i$). The point $p$ lies in the support
of $\trop(M)$ if and only if the matroid $M_p$ is (still) loopfree.
As tropical cycle, $\trop(M)$ can be obtained from the 
unimodular fan $\bfan$ consisting of the cones 
\[\langle \F \rangle:=\left\{\sum_{i=1}^{p}\lambda_i\cdot V_{F_i}: \lambda_1,\ldots,\lambda_{p-1}\geq 0, \lambda_p\in\R\right\},\] 
where $\F =(\emptyset\subsetneq F_1\subsetneq\ldots\subsetneq F_{p-1}\subsetneq F_p=E)$ is a chain of flats in $M$, and $V_F=-\sum_{i\in F} e_i$ denotes the vector corresponding to the flat $F$. 
Here $\{e_1,\ldots,e_n\}$ denotes the standard basis of $\R^n$. 
Equipped with trivial weights $1$ for each facet,
$\bfan$ is balanced (as defined for example in \cite[2.6]{AR}).
We call the resulting tropical cycle $\trop(M)$
the \emph{matroid variety} corresponding to $M$.
Following \cite{ardila}, we call $\bfan$ the \emph{fine subdivision} of $\trop(M)$.
Note that, by definition, $\trop(M)$ has lineality space $\R\cdot (e_1+\ldots+e_n)$.

The following picture shows the fine subdivision of the matroid variety $\trop(U_{3,4})$ (modulo its lineality space $\R\cdot (1,1,1,1)$). Here $U_{3,4}$ denotes the uniform matroid of rank $3$ on the set $N:=\{1,2,3,4\}$ whose bases are the $3$-subsets of $N$ (cf.\ example \ref{lnk}). The maximal cones of $\mf(U_{3,4})$ are of the form $\langle \emptyset\subsetneq\{i\}\subsetneq \{i,j\}\subsetneq\{1,2,3,4\}\rangle$.

\begin{center}
\includegraphics[scale=0.30]{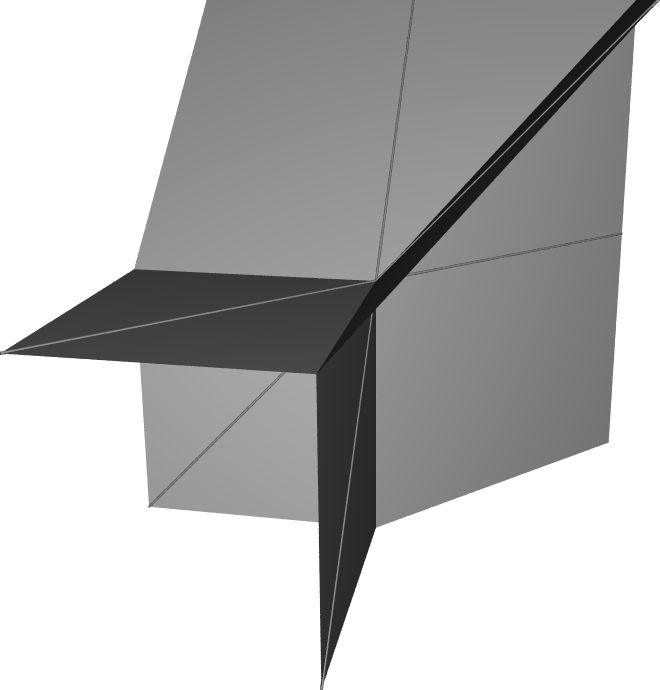}
\end{center}

Recall that the direct sum $M\oplus N$ of two matroids $M$ and $N$ 
is the matroid whose ground set is the disjoint union $E(M)\dcup E(N)$ and whose set of bases is the set $\{B_M\dcup B_N: B_M, B_N \text{ bases of } M, N \text{ respectively}\}$. 

\begin{lemma} \label{directsum}
Let $M, N$ be matroids. Then the two tropical cycles $\trop(M\oplus N)$ and $\trop(M)\times\trop(N)$ are equal.
\end{lemma}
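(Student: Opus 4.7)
The plan is to prove equality of tropical cycles by checking that the two sides have the same support and carry the same (trivial) weights. Using the identification $\R^{E(M) \sqcup E(N)} = \R^{E(M)} \times \R^{E(N)}$, we write any point as $p = (p_M, p_N)$. Since both $\trop(M\oplus N)$ and $\trop(M)\times\trop(N)$ are $(\rank(M)+\rank(N))$-dimensional pure tropical cycles with all facet weights equal to $1$, it suffices to show that their supports coincide; equality as tropical cycles then follows by passing to a common refinement.

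The key matroid-theoretic observation is the identity
\[
(M \oplus N)_p \;=\; M_{p_M} \oplus N_{p_N}.
\]
Indeed, every basis of $M \oplus N$ has the form $B_M \dcup B_N$, and its $p$-weight splits as $\sum_{i \in B_M}(p_M)_i + \sum_{j \in B_N}(p_N)_j$. Jointly minimizing over all bases of $M \oplus N$ therefore amounts to independently minimizing over bases of $M$ and $N$, so the bases of $(M\oplus N)_p$ are precisely the sets of the form $B'_M \dcup B'_N$ with $B'_M$ a basis of $M_{p_M}$ and $B'_N$ a basis of $N_{p_N}$. Since a loop of a direct sum of matroids is necessarily a loop of one of the summands, $M_{p_M} \oplus N_{p_N}$ is loopfree if and only if both $M_{p_M}$ and $N_{p_N}$ are. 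Using the support characterization from the preliminaries, we conclude
\[
p \in |\trop(M\oplus N)| \iff p_M \in |\trop(M)| \text{ and } p_N \in |\trop(N)|,
\]
which is exactly the statement $|\trop(M\oplus N)| = |\trop(M)| \times |\trop(N)|$.

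The only potential obstacle is that the canonical fan structures do not literally coincide: a maximal chain of flats in $M \oplus N$ is a shuffle of maximal chains of flats in $M$ and $N$, so $\bfan(M\oplus N)$ is in general a proper refinement of $\bfan(M) \times \bfan(N)$. This is harmless, however, because we only need equality at the level of tropical cycles, and trivial weights pull back to trivial weights under refinement.
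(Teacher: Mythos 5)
Your proof is correct and follows essentially the same route as the paper: both rest on the identity $(M\oplus N)_{(p,q)} = M_p \oplus N_q$ to get equality of supports, and then conclude equality of cycles because all facet weights are $1$. You simply spell out the verification of that identity and the (harmless) difference between the fine subdivision of $\trop(M\oplus N)$ and the product fan, details the paper leaves implicit.
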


\begin{proof}
The equality of the support sets follows from the equality of matroids \[(M\oplus N)_{(p,q)}=M_p\oplus N_q\] (see also \cite[proposition 2.5]{Speyer}).
As all occurring weights are $1$, that also shows the equality
of the cycles.
\end{proof}

Our next remark about matroid varieties concerns their local structure.
Let us fix our terminology first.

Let $X$ be a tropical cycle in a vector space $V$ and let $p$ be a point
in $X$. We define $\Star_X(p)$ to be the cycle (in $V$) associated to
$\Star_{\curlyx}(\{p\})$ (cf.\ \cite[section 1.2.3]{disshannes}), where
$\curlyx$ is a polyhedral structure of $X$ containing the cell $\{p\}$.
In other words, $\Star_X(p)$ is the fan cycle containing all vectors $v$ such that
$p + \epsilon v \in |X|$ for sufficiently small (positive) $\epsilon$
(with inherited weights).

\begin{lemma}
  \label{localmatroidfans}
  Let $\trop(M)$ be a matroid variety and $p$ a point in $\trop(M)$.
  Then we have
  \[
    \Star_{\trop(M)}(p)=\trop(M_p),
  \] 
  where $M_p$ is the matroid whose bases are the $p$-minimum bases of $M$.
\end{lemma}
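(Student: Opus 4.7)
My plan is to verify the equality of these two fan cycles by checking that they have the same support and the same weight function on facets.

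For the support, I would use the characterisation recalled before the lemma: a point $q$ lies in $|\trop(M)|$ if and only if $M_q$ is loopfree. Applied to $q = p + \epsilon v$, the key observation is the matroid identity $M_{p+\epsilon v} = (M_p)_v$ for all sufficiently small $\epsilon > 0$ (the threshold depending on $p$ and $v$). This is because $\sum_{i \in B}(p_i + \epsilon v_i) = \sum_{i \in B} p_i + \epsilon \sum_{i \in B} v_i$, so once $\epsilon$ is smaller than the $p$-weight gap between the $p$-minimum bases of $M$ and the non-$p$-minimum bases, a basis $B$ minimises the $(p+\epsilon v)$-form exactly when it is $p$-minimum and, among those, $v$-minimum. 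Consequently, $p + \epsilon v \in |\trop(M)|$ for small $\epsilon > 0$ if and only if $(M_p)_v$ is loopfree, i.e.\ $v \in |\trop(M_p)|$. By the definition of $\Star_{\trop(M)}(p)$ recalled above, this gives $|\Star_{\trop(M)}(p)| = |\trop(M_p)|$.

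For the weights, both cycles are pure of the same dimension $\rank(M) = \rank(M_p)$ (bases of $M_p$ are bases of $M$). The matroid variety $\trop(M_p)$ carries weight $1$ on each facet by construction. On the other hand, $\Star_{\trop(M)}(p)$ inherits its facet weights from $\trop(M)$ via the standard star construction, and since every facet of $\trop(M)$ has weight $1$, the same holds for every facet of $\Star_{\trop(M)}(p)$. Two tropical cycles with equal supports and constant weight $1$ coincide as cycles, completing the argument.

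The only point with real content is the matroid identity $M_{p+\epsilon v} = (M_p)_v$; the dimension count and the weight comparison are formal once the support equality is in place, so I do not expect a substantial obstacle.
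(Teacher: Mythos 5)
Your proposal is correct and follows essentially the same route as the paper: the paper's proof consists precisely of invoking the identity $M_{p+\epsilon v}=(M_p)_v$ for sufficiently small $\epsilon$, which you establish via the weight-gap argument and then use to compare supports, the weight-$1$ comparison being immediate since all facets on both sides carry weight $1$.
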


\begin{proof}
  The statement follows from the identity
  $M_{p + \epsilon v} = (M_p)_v$ for any vector $v$
  and sufficiently small $\epsilon$.
\end{proof}

Any matroid $M$ can be decomposed into a direct sum 
$M = M_1 \oplus \cdots \oplus M_k$ of connected
submatroids which is unique up to reordering (cf.\ \cite[corollary 4.2.13]{matroidtheory}). 
It follows from lemma \ref{directsum} that the (maximal) 
lineality space of $\trop(M)$ is of dimension at least the number of
connected components $k$.
Here, 
a lineality space $L$ of a tropical cycle $C \subseteq \R^n$
is a subspace of $\R^n$ such that $C$ is invariant under translations
by vectors in $L$ (see section \ref{dividingout} for further terminology).
The next lemma states that equality holds.

\begin{lemma}
\label{connectedcomponents}
Let $M$ be a matroid on the ground set $E$
and let $\trop(M)$ be the corresponding matroid variety.
Let $L$ be its maximal lineality space. Then the equation
\[
  \dim(L) = \text{number of connected components of }M
\]
holds.
In particular, if $M$ is connected, then $L$ is just spanned by $(1,\ldots,1)$.
\end{lemma}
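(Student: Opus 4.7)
The lower bound $\dim L \geq k$ is already noted in the paragraph preceding the lemma: if $M = M_1 \oplus \cdots \oplus M_k$ is the decomposition into connected summands on ground sets $E_1, \ldots, E_k$, then by Lemma \ref{directsum} we have $\trop(M) = \prod_i \trop(M_i)$, and each factor is invariant under translation by $V_{E_i}$, so $L \supseteq \bigoplus_i \R \cdot V_{E_i}$, a $k$-dimensional subspace.

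For the upper bound my plan is to exploit Lemma \ref{localmatroidfans}. First, if $l \in L$ and $p \in |\trop(M)|$, then $p + \epsilon l \in |\trop(M)|$ for all small $\epsilon > 0$, so $l \in \Star_{\trop(M)}(p) = \trop(M_p)$; since $L$ is a linear subspace of $\R^n$ contained in the support, it in fact lies inside the lineality of the cycle $\trop(M_p)$. Next, I pick $p$ in the relative interior of a maximal cone $\langle \F \rangle$ of the fine subdivision $\bfan$, corresponding to a maximal chain of flats $\F = (\emptyset \subsetneq F_1 \subsetneq \cdots \subsetneq F_{r-1} \subsetneq E)$. A direct greedy-algorithm check shows that every $p$-minimum basis selects exactly one element from each ``level'' $F_k \setminus F_{k-1}$, so $M_p$ is the direct sum $U_{1,|F_1|} \oplus U_{1,|F_2 \setminus F_1|} \oplus \cdots \oplus U_{1,|E \setminus F_{r-1}|}$, and $\trop(M_p)$ is the linear subspace of $\R^n$ consisting of vectors constant on each level of $\F$. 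Hence $L$ is contained in the intersection of these linear subspaces as $\F$ ranges over all maximal flags of flats of $M$.

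The main obstacle is then to identify this intersection with $\bigoplus_i \R \cdot V_{E_i}$, the space of vectors constant on each connected component, whose dimension is exactly $k$. One inclusion is easy: in a maximal flag of a direct sum, the rank jumps by one in exactly one summand per step, so each level lies inside a single $E_i$, and vectors constant on the $E_i$ are automatically constant on the levels. For the other inclusion I would argue by contrapositive: if $v_a \neq v_b$ for elements $a, b$ in a common connected component, then by the standard circuit characterization of connectedness there is a circuit $C$ containing both $a$ and $b$; writing $I = C \setminus \{a, b\}$ (which is independent), the elements $a$ and $b$ are parallel in the contraction $M/\cl(I)$, so $\cl(I \cup \{a\}) = \cl(I \cup \{a, b\})$ is a flat of rank $|I| + 1$. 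Completing the sequence of flats obtained by adding the elements of $I$ one at a time and then this rank-$(|I|+1)$ flat into a maximal flag of flats of $M$ produces a max flag in which $a$ and $b$ share a level, contradicting the constancy assumption. Combining both inclusions gives $\dim L \leq k$, and together with the lower bound we conclude $\dim L = k$.
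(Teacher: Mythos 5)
Your proof is correct, but it takes a genuinely different route from the paper's. The lower bound $\dim(L)\geq k$ is the same in both (it is the remark preceding the lemma, via lemma \ref{directsum}). For the upper bound, the paper argues in one stroke: if $\dim(L)>1$, then $L$ contains some vector $V_S$ with $\emptyset\subsetneq S\subsetneq E$; since $V_S$ lies in the lineality space, $M_{V_S}=M$, so all bases of $M$ have the same number of elements in $S$, hence $S$ is a separator and $M$ is disconnected --- a short basis-weight argument that settles the connected case (the general equality being left as an easy reduction). You instead localise: by lemma \ref{localmatroidfans}, $L\subseteq|\Star_{\trop(M)}(p)|=|\trop(M_p)|$ for every $p$, and for $p$ in the relative interior of a maximal cone of $\bfan$ your greedy computation correctly gives $M_p=U_{1,|F_1|}\oplus\cdots\oplus U_{1,|E\setminus F_{r-1}|}$, so $|\trop(M_p)|$ is the subspace of vectors constant on the levels of the corresponding complete flag; your flag/circuit argument (each level of a flag of a direct sum lies in one component, and two elements of one component lie in a common level of some flag via a circuit through them) then identifies the intersection of these subspaces with the span of the component indicator vectors, of dimension $k$. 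Both arguments are sound; the trade-off is that the paper's proof is shorter but takes as given that a lineality space of dimension greater than one contains a braid ray $V_S$, whereas yours needs only the support containment $L\subseteq|\trop(M_p)|$ at a generic point (note that your stronger parenthetical claim, that a linear subspace contained in the support of a fan cycle lies in its lineality space, is not justified in general --- but you never use it, since for your generic $p$ the support is itself a linear subspace), at the cost of the longer combinatorial step about flags; it also yields the equality $\dim(L)=k$ directly rather than through the connected case.
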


\begin{proof}
  Obviously, it suffices to show that $M$ is disconnected if
  $\dim(L) > 1$. So let us assume that $L$ is more than the span of $(1,\ldots,1)$.
  Then $L$ must contain some vector $V_S$ with $\emptyset \subsetneq 
  S \subsetneq E$. The fact that $V_S$ is contained in the lineality space
  of $\trop(M)$ means that $M_{\lambda V_S}$ stays the same for all
  $\lambda \in \R$; in particular, $M_{V_S} = M$. 
  Hence all bases of $M$ have the same $V_S$-weight, i.e.\ have the same 
  number of elements in $S$ (resp.\ $E \setminus S$). This shows that 
  $S$ is a separator (i.e.\ a union of connected components).
\end{proof}

We finish this section by showing that a matroid 
variety $\trop(M)$ is always irreducible (i.e.\ any subcycle 
$X$ of $\trop(M)$ of the same dimension is $X = m \cdot \trop(M)$
for some integer $m$). 
Moreover, by lemma \ref{localmatroidfans} this implies that
a matroid variety $\trop(M)$ is locally irreducible 
(i.e.\ for every point $p$ in $\trop(M)$, the local fan
$\Star_{\trop(M)}(p)$ is also irreducible, 
cf.\ \cite{disshannes}*{definition 1.2.27}).
To show irreducibility, let us recall that for
any cycle $X \subseteq \R^n$ of pure dimension $k$ 
the \emph{projective degree} of $X$ is given
by
\[
  \deg(X) := \deg(\max\{0,x_1, \ldots, x_n\}^{n-k} \cdot X).
\]
It follows easily from \cite{Speyer}*{section 3} that each matroid
variety has projective degree $1$ (moreover, Fink shows 
a converse statement
in \cite{Fink}*{theorem 6.5}). As in the classical case, this implies irreducibility,
though this implication is not quite as trivial as 
$\trop(M)$ could be split into cycles with possibly negative weights.

\begin{lemma}
  Each matroid variety $\trop(M)$ is irreducible, and therefore,
  by lemma \ref{localmatroidfans}, also locally irreducible.
\end{lemma}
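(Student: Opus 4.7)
The plan is to show that every top-dimensional subcycle $X \subseteq \trop(M)$ has constant integer weights on the facets of the fine subdivision $\mf(M)$, so that $X = m\cdot\trop(M)$ for some $m\in\Z$. Since $\mf(M)$ is a polyhedral structure on $|\trop(M)|$ with all facets of weight $1$, I write $X = \sum_\sigma m_\sigma\sigma$ with $m_\sigma\in\Z$, and the irreducibility claim reduces to showing that all $m_\sigma$ coincide. This will follow by combining (i) local constancy at each codimension-$1$ cell of $\mf(M)$ with (ii) connectedness of the associated dual graph of facets.

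For (i), fix a codimension-$1$ cell $\tau$ of $\mf(M)$. It corresponds to an incomplete chain of flats of $M$ with a gap at some rank $r$, that is, $F_{r-1}\subsetneq F_{r+1}$ with the rank-$r$ flat omitted. The facets containing $\tau$ are indexed by the rank-$r$ flats $F_r^{(i)}$ of $M$ with $F_{r-1}\subsetneq F_r^{(i)}\subsetneq F_{r+1}$; in the quotient $\R^n/\langle\tau\rangle$ the corresponding tangent directions are the classes $\bar V_{F_r^{(i)}}$. These classes are precisely the atom-vectors of the rank-$2$ minor $N := (M|_{F_{r+1}})/F_{r-1}$. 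Whether $N$ is connected (its atoms being its parallel classes) or decomposes as two rank-$1$ coloops (with exactly two atoms), the atoms always partition the ground set of $N$; hence, after reducing modulo the balancing vector $V_{E(N)}$, the $\bar V_{F_r^{(i)}}$ admit, up to scalar, the unique linear relation $\sum_i \bar V_{F_r^{(i)}} = 0$. The balancing of $X$ at $\tau$ then forces the weights $m_{\sigma_i}$ on all facets containing $\tau$ to be equal.

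For (ii), I invoke the standard shellability of the geometric lattice of flats of $M$, which implies that any two maximal chains of flats are related by a sequence of single-flat substitutions; this translates to connectedness of the dual graph of facets of $\mf(M)$. Combining with (i), all the $m_\sigma$ coincide, giving $X = m\cdot\trop(M)$. Comparing projective degrees then pins down $m = \deg X$, which is the precise realisation of the ``degree-$1$'' principle alluded to in the discussion preceding the lemma. Local irreducibility follows immediately: for any $p\in\trop(M)$, lemma~\ref{localmatroidfans} identifies $\Star_{\trop(M)}(p)$ with the matroid variety $\trop(M_p)$, which is then irreducible by the global statement.

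The main obstacle is the unique-relation claim in step (i); this is where the explicit matroid structure (via the rank-$2$ minor) enters decisively. The global connectedness in step (ii) is a well-known consequence of the shellability of geometric lattices.
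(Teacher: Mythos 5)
Your argument is correct, but it is a genuinely different proof from the one in the paper. The paper deduces irreducibility from the fact that $\trop(M)$ has projective degree $1$: for a generic point $p$ one finds a translate $H$ of $\max\{0,x_1,\ldots,x_n\}^{n-k}\cdot\R^n$ with $|\trop(M)|\cap|H|=\{p\}$, and then $H\cdot X=\deg(X)\cdot\{p\}$ forces $X=\deg(X)\cdot\trop(M)$; this is short but leans on Speyer's degree computation and on a genericity argument for the translated complete intersection. You instead prove weight-constancy directly on the fine subdivision: at a codimension-one cone of $\mf(M)$ (a flag with exactly one rank missing) the adjacent facets correspond to the atoms of the rank-two minor $(M|_{F_{r+1}})/F_{r-1}$, these atoms partition $F_{r+1}\setminus F_{r-1}$, and hence the primitive normal vectors satisfy, modulo the span of the codimension-one cone, a unique relation up to scalar with all coefficients equal, so balancing equalises the weights of adjacent facets; connectivity of the facet--ridge graph (strong connectedness of the shellable order complex of the geometric lattice, equivalently connectivity of the flag graph under single-flat substitutions) then makes all weights equal. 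This buys a self-contained combinatorial proof using only the matroid structure and balancing, at the cost of being longer; one small point you should make explicit is that the ``unique relation'' must be verified modulo the full span of the codimension-one cone, not just modulo $V_{E(N)}$ --- this works because every other flat in the chain either contains $F_{r+1}$ or is contained in $F_{r-1}$, hence restricts to a constant vector on $F_{r+1}\setminus F_{r-1}$, so the relation space is exactly the one you compute in the minor. The final identification $m=\deg X$ is not needed for the statement, and the deduction of local irreducibility via lemma \ref{localmatroidfans} is exactly as in the paper.
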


\begin{proof}
  Let $k$ be the dimension of $\trop(M)$ and let $H$ be a translation of 
  \[
    \max\{0,x_1, \ldots, x_n\}^{n-k} \cdot \R^n.
  \]
  As $\deg(\trop(M)) = 1$ and both $\trop(M)$ and $H$ carry only positive
  weights, it is not hard to see that for a generic 
  point $p$ in $\trop(M)$, there is a suitable translation $H$ such that 
  $\{p\} = |\trop(M)| \cap |H|$.
  Now let $X$ be a subcycle of $\trop(M)$ of the same dimension. 
  For $p$ and $H$ as before,
  we must have $H \cdot X = \deg(X) \cdot \{p\}$, 
  and therefore $X = \deg(X) \cdot \trop(M)$.
\end{proof}

\section{Matroid quotients and rational functions}

Let us fix a set $E$ with $n$ elements and let $O$ be the (trivial) matroid of rank $n$ on $E$ (i.e.\ the only basis is given by the whole set $E$).
Following the construction of \cite{ardila} described earlier, 
we get a subdivision $\mf(O)$ of $\R^n$, with
minimal cone $\R\cdot(1,\ldots,1)$, which
is also called \emph{braid arrangement}.
By construction, any other matroid $M$ on $n$ elements produces a subfan
$\mf(M)$ of this subdivision $\mf(O)$. 
For two matroids $M$ and $N$, we conclude
\[
  |\mf(N)| \subseteq |\mf(M)| \;\Leftrightarrow\;
    \mf(N) \subseteq \mf(M) \;\Leftrightarrow\;
    \{\text{flats of } N\} \subseteq \{\text{flats of } M\}.
\]
In the following, we just write $\trop(N) \subseteq \trop(M)$ in this situation.
The last equivalence has the following consequences on the rank functions of $M$ and $N$.

\begin{lemma} \label{equalities}
  Let $M$ and $N$ be matroids of rank $r$ resp.\ $s$ such that
  $\trop(N) \subseteq \trop(M)$. Let $A \subseteq B$ be arbitrary subsets
  of $E$. Then the equation
  \[
    \rank_M(A)-\rank_N(A) \leq \rank_M(B)-\rank_N(B)
  \]
  holds. Plugging in $A=\emptyset$ and $B = E$, we obtain
  \[
    \rank_N(A) \leq \rank_M(A) \leq \rank_N(A)+r-s.
  \]
\end{lemma}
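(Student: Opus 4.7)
The plan is to reduce the monotonicity statement to the single-element case and then use the hypothesis $\trop(N)\subseteq\trop(M)$ in its closure formulation. Concretely, let $f(S):=\rank_M(S)-\rank_N(S)$. I want to show $f(A)\leq f(B)$ whenever $A\subseteq B$. By adding one element at a time, it suffices to prove
\[
  f(A) \leq f(A\cup\{x\})
\]
for any $x\in E\setminus A$. Since each of $\rank_M(A\cup\{x\})-\rank_M(A)$ and $\rank_N(A\cup\{x\})-\rank_N(A)$ lies in $\{0,1\}$, the inequality can only fail if $\rank_M$ does not grow but $\rank_N$ does, i.e.\ if $x\in\cl_M(A)$ but $x\notin\cl_N(A)$. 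So the whole lemma comes down to establishing
\[
  \cl_M(A) \subseteq \cl_N(A) \quad \text{for every } A\subseteq E.
\]

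For this, I would use the equivalence from the preceding paragraph of the excerpt: $\trop(N)\subseteq\trop(M)$ is the same as every flat of $N$ being a flat of $M$. Now $\cl_N(A)$ is the intersection of all flats of $N$ containing $A$, and $\cl_M(A)$ is the intersection of all flats of $M$ containing $A$. Since each flat of $N$ containing $A$ is also a flat of $M$ containing $A$, the intersection taken over $M$-flats is contained in the intersection taken over $N$-flats, giving $\cl_M(A)\subseteq\cl_N(A)$. Combining this with the reduction above proves the main inequality.

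For the ``in particular'' part, one just substitutes. Setting $A=\emptyset$ in the main inequality and using $\rank_M(\emptyset)=\rank_N(\emptyset)=0$ yields $\rank_N(A')\leq\rank_M(A')$ for any subset $A'$ (here playing the role of $B$). Setting $B=E$ and using $\rank_M(E)=r$, $\rank_N(E)=s$ gives $\rank_M(A)-\rank_N(A)\leq r-s$, i.e.\ $\rank_M(A)\leq\rank_N(A)+r-s$.

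I do not expect a real obstacle here: the only conceptual step is translating the hypothesis into the closure inclusion $\cl_M(A)\subseteq\cl_N(A)$, after which everything is just the standard observation that rank functions increase by $0$ or $1$ when a single element is added. The rest is bookkeeping via induction on $|B\setminus A|$.
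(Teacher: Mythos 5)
Your proof is correct and rests on the same key fact as the paper's, namely that $\trop(N)\subseteq\trop(M)$ gives $\cl_M(A)\subseteq\cl_N(A)$ for every $A$, followed by an inductive one-step reduction. The only (inessential) difference is that you induct by adding one ground-set element at a time and use the unit-increase property of rank, whereas the paper first reduces to $M$-closed sets and steps through covers in the lattice of flats of $M$; both arguments are equally valid.
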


\begin{proof}
  As $\cl_M(A) \subseteq \cl_N(A)$ for any set $A$, 
  we can assume that $A$ and $B$ are closed in $M$. By induction,
  we can also assume $\rank_M(B) - \rank_M(A) =1$, i.e.\
  $B = \cl_M(A \cup x)$ for an element 
  $x \in B \setminus A$. It follows $\cl_N(B) = \cl_N(\cl_M(A \cup x))
  = \cl_N(A \cup x)$, 
  i.e.\ $\rank_N(B) - \rank_N(A) \leq 1$, 
  which proves the claim.
  Another proof is contained in \cite[proposition 7.3.6]{matroidtheory}.
\end{proof}

We will now see that there is a notion in matroid theory which
captures containment of matroid varieties. This notion
is based on the following standard constructions for matroids.

Let $Q$ be a matroid on the set $E \dcup R$. Then the \emph{deletion}
$Q \setminus R$ is the matroid on $E$ given by the rank function
\[
  \rank_{Q \setminus R}(A) = \rank_Q(A),
\]
whereas the \emph{contraction} $Q/R$ is the (potentially not loopfree) 
matroid on $E$ given by
\[
  \rank_{Q / R}(A) = \rank_Q(A \cup R) - \rank_Q(R).
\]
Note that $Q/R$ is loopfree if and only if $R$ is a flat in $Q$.
The next definition (following \cite[section 7.3]{matroidtheory})
combines both operations.

\begin{definition}
  Let $M$ and $N$ be matroids of rank $r$ resp.\ $s$ 
  on the same ground set $E$. We call $N$ a \emph{quotient} of
  $M$ if there exists a third matroid $Q$ on the ground set $E \dcup R$ such 
  that $M = Q \setminus R$ and $N = Q / R$.
  In this case, we have $r - s = r_Q(E) + \rank_Q(R) - r_Q(Q)$.
  Furthermore, if $r-s=1$, we call $N$ an \emph{elementary quotient}
  of $M$.
\end{definition}

Now, in fact, containment of matroid varieties is related to quotients as follows.

\begin{proposition} \label{quotient}
  The matroid variety $\trop(N)$ is a subcycle of $\trop(M)$ if and only if $N$ is a 
  quotient of $M$.
\end{proposition}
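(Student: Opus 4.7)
The plan is to reduce the statement to the classical combinatorial characterisation of matroid quotients in terms of flats. Indeed, as already observed in the paragraph preceding lemma \ref{equalities}, the inclusion $\trop(N) \subseteq \trop(M)$ is equivalent to the condition that every flat of $N$ is a flat of $M$. Thus the proposition reduces to the purely combinatorial equivalence that $N$ is a quotient of $M$ if and only if every flat of $N$ is a flat of $M$.

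For the implication ``quotient $\Rightarrow$ flat containment'', I would argue directly from the definitions. Pick $Q$ on $E \dcup R$ realising $M = Q \setminus R$ and $N = Q/R$. Since $N$ is loopfree, $R$ must be a flat of $Q$, and the flats of $N = Q/R$ are then exactly the sets $F \subseteq E$ for which $F \cup R$ is a flat of $Q$. Given such an $F$ and any $x \in E \setminus F$, the fact that $F \cup R$ is closed in $Q$ yields $\rank_Q(F \cup R \cup x) > \rank_Q(F \cup R)$; combining this with submodularity of $\rank_Q$ and the identity $\rank_M = \rank_Q|_E$ (from the definition of deletion) forces $\rank_M(F \cup x) > \rank_M(F)$, so that $F$ is indeed a flat of $M$.

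For the converse implication, assuming every flat of $N$ is a flat of $M$, one has to construct an extension $Q$. This is the content of a classical result of matroid theory, for which I would simply invoke \cite[proposition 7.3.6]{matroidtheory}. A constructive alternative proceeds by induction on $r-s$, reducing to the case of an elementary quotient ($r-s=1$), in which $Q$ is built as a single-element extension of $M$ whose behaviour is prescribed by a suitable modular cut (essentially the modular cut of those $M$-flats whose $N$-rank drops). Iterating along a chain of elementary quotients $M = N_0, N_1, \ldots, N_{r-s} = N$, each obtained by filtering the flat collection, then produces the desired $Q$ on $E \dcup R$.

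The main obstacle is really this second implication, since producing $Q$ from purely flat-theoretic data requires the modular-cut machinery; however, this is well-established in the matroid literature and we may simply appeal to \cite{matroidtheory} rather than reproducing the construction.
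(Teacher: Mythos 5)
Your proof is correct, but it takes a different route from the paper on the hard direction. Your argument for ``quotient $\Rightarrow$ flats of $N$ are flats of $M$'' is fine (the paper dispatches this with ``straight from the definitions''; your submodularity argument, or simply $x \in \cl_Q(F) \subseteq \cl_Q(F \cup R) = F \cup R$, both work). For the converse, however, you outsource the content to \cite{matroidtheory}*{proposition 7.3.6} (or sketch a modular-cut/elementary-quotient induction), whereas the paper deliberately avoids this --- it notes that Oxley's proof is lengthy and instead gives a short self-contained construction: fix $R$ with $r-s$ elements and define $Q$ on $E \dcup R$ by the explicit rank function $\rank_Q(I \dcup J) = \min\{\rank_M(I) + |J|,\ \rank_N(I) + r - s\}$, then verify the rank axioms of theorem \ref{criterionrank} using the inequalities of lemma \ref{equalities}. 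The difference is not merely stylistic: this explicit $Q$ (minimal in the sense of remark \ref{Q}, symmetric in $R$) is reused later to produce the intermediate matroids $M_i$ of corollary \ref{zwischenmatroid} and the explicit diagonal-cutting functions $\varphi_i$ of corollary \ref{diagonal}, so a bare citation proves the proposition but does not supply these downstream ingredients. Also note that your constructive alternative via a chain of elementary quotients quietly presupposes the Higgs factorisation (Oxley's proposition 7.3.5), which in this paper is itself \emph{derived} from the explicit $Q$; as a freestanding proof it is fine only because you ultimately fall back on the literature.
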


A quite lengthy proof can be found in \cite[proposition 7.3.6]{matroidtheory}. To be self-contained, we give a short proof here. 
We use the following criterion for rank functions.

\begin{theorem}[\cite{matroidtheory} theorem 1.4.14] \label{criterionrank}
  Let $\rank$ be an integer valued function on the set of subsets of $E$. Then
  $\rank$ is the
  rank function of a matroid if and only it satisfies the following properties.
  \begin{enumerate}
    \item $\rank(\emptyset) = 0$.
    \item If $A \subseteq E$ and $x \in E$, then $\rank(A) \leq \rank(A \cup x)
          \leq \rank(A) + 1$.
    \item If $A \subseteq E$ and $x,y \in E$ such that $\rank(A \cup x) =
          \rank(A \cup y) = r(A)$, then $\rank(A \cup x \cup y) = r(A)$.
  \end{enumerate}
\end{theorem}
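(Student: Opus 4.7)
The statement has two directions. For the ``only if'' direction, one checks that the matroid rank function satisfies (1)--(3). Property (1) is immediate; (2) is the standard monotonicity and unit-growth of matroid rank. Property (3) follows from submodularity applied to $A\cup\{x\}$ and $A\cup\{y\}$, giving
\[
\rank(A\cup x) + \rank(A\cup y) \geq \rank(A\cup\{x,y\}) + \rank(A),
\]
which combined with $\rank(A\cup\{x,y\}) \geq \rank(A)$ and hypothesis yields equality.

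For the ``if'' direction, I would start from a function $\rank$ satisfying (1)--(3) and set
\[
\mathcal{I} := \{I\subseteq E : \rank(I)=|I|\},
\]
then show that $(E,\mathcal{I})$ is a matroid whose rank function is $\rank$. The first two independence axioms are short: $\emptyset \in \mathcal{I}$ by (1), and hereditariness follows by iterating (2) along a chain from $\emptyset$ to $J$ to $I$ to force $\rank(J)=|J|$ whenever $J\subseteq I\in\mathcal{I}$.

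The crucial ingredient is a multi-element version of (3), proved by induction on $|T|$: if $\rank(A\cup x)=\rank(A)$ for every $x\in T$, then $\rank(A\cup T)=\rank(A)$. Axiom (3) only handles two elements, so the right inductive hypothesis to carry is the stronger statement that, writing $A_i:=A\cup\{t_1,\ldots,t_i\}$, we have $\rank(A_i)=\rank(A)$ together with $\rank(A_i\cup y)=\rank(A_i)$ for every $y$ not yet absorbed; the step then applies (3) with base $A_i$ to the pair $(t_{i+1},y)$. Granted this lemma, the exchange axiom (I3) is a short contradiction: if $I,J\in\mathcal{I}$ with $|I|<|J|$ and no element of $J\setminus I$ extends $I$, then by (2) each $x\in J\setminus I$ satisfies $\rank(I\cup x)=\rank(I)$, so the lemma gives $\rank(I\cup J)=|I|$, contradicting $\rank(I\cup J)\geq \rank(J)=|J|$.

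It remains to identify $\rank$ with the rank function $\rho$ of the matroid just constructed. For any $A\subseteq E$, pick a maximal independent $J\subseteq A$, so $\rho(A)=|J|=\rank(J)$. Maximality combined with (2) forces $\rank(J\cup x)=\rank(J)$ for every $x\in A\setminus J$, and the lemma promotes this to $\rank(A)=\rank(J)=\rho(A)$. The main obstacle of the whole argument is packaging the two-element axiom (3) into the multi-element lemma via a strengthened inductive hypothesis; everything else is essentially bookkeeping.
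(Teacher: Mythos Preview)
The paper does not give a proof of this theorem; it is quoted from Oxley's \emph{Matroid Theory} (theorem 1.4.14) and used only as a black box, namely to verify in the proof of proposition~\ref{quotient} that the function $\rank_Q$ defined there is indeed a rank function. So there is no argument in the paper to compare against.

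Your proof is correct and is essentially the standard one (and the one Oxley gives). A minor comment: the ``strengthened inductive hypothesis'' you describe is really just the verification that the induction hypothesis applies to the new base set. Phrased as straight induction on $|T|$: pick $t\in T$, set $A'=A\cup t$ and $T'=T\setminus\{t\}$; then $\rank(A')=\rank(A)$ by assumption, and for each $y\in T'$ axiom (3) applied to the pair $(t,y)$ over $A$ gives $\rank(A'\cup y)=\rank(A')$, so the hypothesis holds for $(A',T')$ and the induction runs. This is exactly your argument, just without tracking the intermediate $A_i$ explicitly. Everything else---hereditariness via the chain $\emptyset\subseteq J\subseteq I$, the exchange axiom by contradiction, and the identification $\rank=\rho$ via a maximal independent subset---is clean and complete as you wrote it.
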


\begin{proof}[Proof of proposition \ref{quotient}] 
  If $N$ is a quotient of $M$, then it follows straight from the definitions
  that
  any flat of $N$ is also closed in $M$. This proves one implication.
  
  For the other direction, let us assume $\trop(N) \subseteq \trop(M)$.
  First, we fix a set $R$ with $r-s$ elements. We define a matroid $Q$
  on $E \dcup R$ by assigning to each subset $I \dcup J \subseteq E \dcup R$
  the rank 
  \begin{equation} \label{eqmin} 
    \rank_Q(I \dcup J) = \min\{\rank_M(I) + |J|, \rank_N(I) + r - s\}.
  \end{equation}
  Using the inequalities of lemma \ref{equalities} and plugging in
  $I \dcup \emptyset$, $I \dcup R$ and $\emptyset \dcup R$, we
  see that   indeed $Q \setminus  R = M$ and $Q/R = N$. 
  
  It remains to check, by using the criteria of theorem \ref{criterionrank}, 
  that $\rank_Q$ is indeed a rank function. The first criterion 
  is trivial,
  the second one follows from the corresponding property of $\rank_M$ and
  $\rank_N$.
  As for the third criterion,
  for a given $A = I \dcup J$, note that if adding an element $x$ does not
  increase the first term of the minimum in equation \eqref{eqmin}, 
  then it does not increase the 
  second term either, as we have $\rank_N(I \cup x) - \rank_N(I) \leq 
  \rank_M(I \cup x) - \rank_M(I)$ by lemma \ref{equalities}. So the third 
  property follows from the respective property of $\rank_N$ 
  (if the minimum in equation \eqref{eqmin}
  is attained in the second term) and $\rank_M$ (otherwise). This finishes the proof.
\end{proof}

\begin{remark}
\label{Q}
Note that the matroid $Q$ we constructed is minimal in the 
following sense. It is loopfree, $R$ is independent 
and closed in $Q$ and $\rank(Q)=\rank(M)$
(cf.\ \cite{matroidtheory}*{lemma 7.3.3}). 
\end{remark}

Let us use proposition \ref{quotient} now. We start again with two matroids $N$ and $M$ such that $\trop(N) \subseteq \trop(M)$. Let $Q$ be the matroid we constructed
in the previous proof and assume $R = \{1, \ldots, r-s\}$. Instead of deleting (or contracting) the whole set $R$ we might do the following: For given $i,j \geq 0$ with $i+j \leq r-s$, we can define the matroid
\[
  Q \setminus i \;/ j := (Q \setminus \{1,..i\})/\{i+1, \ldots, i+j\}
\]
on the ground set $E \dcup \{i+j+1, \ldots, r-s\}$. Of course,
as $Q$ is symmetric in $R$, we could as well have chosen any other subsets
of $R$ with $i$ resp.\ $j$ elements. Particularly interesting are the
matroids $M_i := Q \setminus i \;/ (r-s)-i$ with ground set $E$. We see
directly from the definition that $M_0 = N$, $M_{r-s} = M$ and 
$\rank_{M_i}(E) = s + i$. Moreover, one can easily check that $\trop(M_i) \subseteq \trop(M_j)$ holds for all $i \leq j$ (either by computing the flats
using for example \cite[proposition 3.3.1]{matroidtheory} or by noting that
$Q \setminus i \;/ (r-s) - j$ is a matroid that makes $M_i$ a quotient 
of $M_j$). The rank function of $M_i$ is given by
\[
  \rank_{M_i}(A):=\min\{\rank_N(A)+i,\ \rank_M(A)\}.
\]
Summarising, we get the following statement.

\begin{corollary}
\label{zwischenmatroid}
Let $M$ and $N$ be matroids of rank $r$ resp.\ $s$ such that $\trop(N)\subseteq\trop(M)$. 
Then there exists a sequence of matroids $M_i$ with the properties
$M_0=N$, $M_{r-s}=M$, $\rank_{M_i}(E)=\rank_{N}(E)+i$, and $\trop(M_{i})\subseteq\trop(M_{i+1})$.
\end{corollary}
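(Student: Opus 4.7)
The plan is to directly extract the sequence from the matroid $Q$ constructed in the proof of proposition \ref{quotient}, using intermediate deletions/contractions. Since $\trop(N) \subseteq \trop(M)$, proposition \ref{quotient} supplies a matroid $Q$ on $E \dcup R$ with $|R| = r-s$ such that $M = Q \setminus R$ and $N = Q/R$; take the explicit $Q$ with rank function \eqref{eqmin}, which by remark \ref{Q} is loopfree, has $R$ as an independent flat, and satisfies $\rank(Q) = \rank(M) = r$. I would label $R = \{1, \ldots, r-s\}$ and set
\[
  M_i := Q \setminus \{1, \ldots, i\} \;/\; \{i+1, \ldots, r-s\}
\]
on ground set $E$ for $0 \leq i \leq r-s$. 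By construction, $M_0 = Q/R = N$ and $M_{r-s} = Q \setminus R = M$.

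Next I would compute the rank function of $M_i$. For $A \subseteq E$, deletion/contraction together with \eqref{eqmin} give
\[
  \rank_{M_i}(A) = \rank_Q\bigl(A \cup \{i+1,\ldots,r-s\}\bigr) - \rank_Q\bigl(\{i+1,\ldots,r-s\}\bigr)
                 = \min\{\rank_M(A),\ \rank_N(A)+i\},
\]
using $\rank_Q(\{i+1,\ldots,r-s\}) = r-s-i$, which follows from the minimum formula since $\rank_M(\emptyset) = \rank_N(\emptyset) = 0$. Taking $A = E$ and recalling $\rank_N(E) = s$, $\rank_M(E) = r$, we obtain $\rank_{M_i}(E) = s + i$ as required.

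The main remaining step is the containment $\trop(M_i) \subseteq \trop(M_{i+1})$. By proposition \ref{quotient} it is enough to realise $M_i$ as a (one-step) quotient of $M_{i+1}$. The natural candidate is
\[
  Q_i := Q \setminus \{1,\ldots,i\} \;/\; \{i+2,\ldots,r-s\}
\]
on the ground set $E \dcup \{i+1\}$: since deletion and contraction of disjoint subsets commute, one checks directly that $Q_i \setminus \{i+1\} = M_{i+1}$ and $Q_i / \{i+1\} = M_i$, so $M_i$ is indeed a quotient of $M_{i+1}$.

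The subtle point I anticipate is ensuring that everything is loopfree, since our convention requires loopfree matroids and $\trop$ is defined only then. This reduces to checking that each subset of $R$ used for contraction is a flat of $Q$ (and then of the appropriate deletion). This follows from the minimum formula: for any $S \subseteq R$ and any $x \in (E \dcup R) \setminus S$, plugging into \eqref{eqmin} and using that $M$ and $N$ are loopfree (so $\rank_M(x) = \rank_N(x) = 1$ for $x \in E$) yields $\rank_Q(S \cup x) = \rank_Q(S) + 1$, so $S$ is closed in $Q$. Hence all the matroids $M_i$ and auxiliary matroids $Q_i$ are loopfree, and the sequence has the claimed properties.
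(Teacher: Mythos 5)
Your proof is correct and follows essentially the same route as the paper: both take the matroid $Q$ from proposition \ref{quotient} and define $M_i$ by deleting $i$ elements of $R$ and contracting the remaining $r-s-i$, compute the rank function $\min\{\rank_M(A),\rank_N(A)+i\}$, and obtain $\trop(M_i)\subseteq\trop(M_{i+1})$ by exhibiting an intermediate deletion/contraction of $Q$ as a witness that $M_i$ is a quotient of $M_{i+1}$. Your explicit loopfreeness check (every subset of $R$ is a flat of $Q$) is a welcome detail the paper only records implicitly via remark \ref{Q}.
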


\begin{remark}
  The matroid-theoretic counterpart of this statement can be found in
  \cite[proposition 7.3.5]{matroidtheory}. 
\end{remark}

The previous corollary suggests to study elementary quotients in more detail. 
Before we do that, let us have a look towards the geometric meaning of
deletions and contractions of matroids.

Let $Q$ be matroid on the set $E \dcup R$ and let $\trop(Q)$ be its matroid variety in $\R^{E \dcup R}$. 
Assume that $R$ is a flat of $Q$ (i.e.\ $Q/R$ is loopfree) and that there exists a basis $B$ of $Q$ such that $R\cap B=\emptyset$ (i.e.\ $\rank(Q)=\rank(Q\setminus R)$). From that, we construct two tropical cycles in $\R^E$.
First, the projection map $\pi_R : \R^{E \dcup R} \rightarrow \R^E$ 
produces the push-forward $(\pi_R)_*(\trop(Q))$ (for the definition of
push-forward, see e.g.\ \cite[definition 1.3.6]{disshannes}).
Second, we can take the closure of $\trop(Q)$ in $(\R \cup \{-\infty\})^{E \dcup R}$ and
perform the intersection $\overline{\trop(Q)} \cap (\R^E \times \{-\infty\}^R)$
with a coordinate plane at infinity.
In other words, we intersect $\trop(Q)$ with $\R^E \times \{-\lambda\}^R$,
where $\lambda$ is a large real number.
Let us denote the resulting set/cycle in $\R^E$ by $\trop(Q)^{\cap R}$.
Now, the following statement relates these geometric constructions to
the matroid-theoretic notions of contraction and deletion.

\begin{lemma} \label{geometric}
  With the notations and assumptions from above, we see that 
  the deletion of $R$ corresponds 
  to projecting, i.e.\
  \[
    \trop(Q \setminus R) = (\pi_R)_*(\trop(Q)),
  \]
  and the contraction of $R$ corresponds to intersecting with the
  appropriate coordinate hyperplane at infinity, i.e.\
  \[
    \trop(Q / R) = \trop(Q)^{\cap R}.
  \]
  Moreover, the map $\pi_R : \trop(Q) \rightarrow \trop(Q \setminus R)$ is
  generically one-to-one. 
\end{lemma}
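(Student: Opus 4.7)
I would handle the three assertions in the order push-forward, generic injectivity, contraction.

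For the push-forward equality $\trop(Q\setminus R)=(\pi_R)_*(\trop(Q))$, the plan is to compare the fine subdivisions $\mf(Q)$ and $\mf(Q\setminus R)$. Under $\pi_R$ the primitive generator $V_F$ attached to a flat $F$ of $Q$ maps to $V_{F\cap E}\in\R^E$. The key combinatorial input is the elementary identity $\cl_Q(A)\cap E=\cl_{Q\setminus R}(A)$ for $A\subseteq E$, which is immediate from the definition of the deletion rank function. Using it, I would set up a bijection between maximal chains of flats $\emptyset\subsetneq F_0'\subsetneq\cdots\subsetneq F_r'=E$ of $Q\setminus R$ and ``adapted'' maximal chains of $Q$-flats defined by $F_i:=\cl_Q(F_i')$; they automatically satisfy $F_i\cap E=F_i'$ and exhaust all ranks $0,\ldots,r$ thanks to the hypothesis $\rank(Q)=\rank(Q\setminus R)=r$. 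For such an adapted chain, $\pi_R$ restricts to a linear bijection from the top cone $\langle(F_i)\rangle$ of $\mf(Q)$ onto the top cone $\langle(F_i')\rangle$ of $\mf(Q\setminus R)$ sending primitive generators to primitive generators, hence with lattice index~$1$. Every other maximal chain in $Q$ contains two consecutive flats with equal trace on $E$ and therefore projects to a cone of strictly smaller dimension, so contributes nothing to the push-forward.

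The generic injectivity of $\pi_R\colon\trop(Q)\to\trop(Q\setminus R)$ then drops out: for a point $p$ in the relative interior of a top cone $\sigma'$ of $\mf(Q\setminus R)$, the bijection above identifies a unique adapted top cone $\sigma$ of $\mf(Q)$ above $\sigma'$, and $\pi_R|_\sigma$ is a bijection onto $\sigma'$.

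For the contraction equality $\trop(Q/R)=\trop(Q)^{\cap R}$, I would analyze, for sufficiently large $\lambda$, when $(p,-\lambda\mathbf{1}_R)$ belongs to $\trop(Q)$. The very negative weight on $R$ forces every $(p,-\lambda\mathbf{1}_R)$-minimum basis $B$ of $Q$ to maximize $|B\cap R|$, hence $|B\cap R|=\rank_Q(R)$, so that $B\cap R$ is a basis of $Q|_R$ and $B\cap E$ is a basis of $Q/R$; moreover, $(p,-\lambda\mathbf{1}_R)$-minimality of $B$ reduces to $p$-minimality of $B\cap E$ as a basis of $Q/R$. Loopfreeness of $Q_{(p,-\lambda\mathbf{1}_R)}$ then decouples: every element of $R$ must lie in some basis of $Q|_R$, automatic since $Q|_R$ is loopfree; and every element of $E$ must lie in some $p$-minimum basis of $Q/R$, which is exactly $p\in|\trop(Q/R)|$. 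The weight-one statement on every top cone follows from the same lattice-index computation as in the push-forward part, now applied to the chains of flats of $Q$ passing through~$R$.

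The main obstacle is the contraction case when $R$ is not independent in $Q$: the intersection of $\trop(Q)$ with the hyperplane $\R^E\times\{-\lambda\}^R$ is then not transversal, and the naive dimension count underestimates the dimension of the intersection. Making the decoupling above rigorous in that setting requires careful book-keeping on the fine subdivision of $\trop(Q)$ to see that all maximal flags passing through $R$ collapse onto the same top cone of $\mf(Q/R)$ with combined multiplicity~$1$, rather than the seemingly wrong count suggested by a naive transversal intersection.
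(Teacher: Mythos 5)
Your proposal is correct and is essentially the paper's own proof: the deletion statement via the bijection between maximal chains of flats (unique lift $F_i=\cl_Q(G_i)$ of a maximal chain downstairs, all other maximal chains having two consecutive flats with equal trace and hence dropping dimension, lattice index $1$), which also yields generic injectivity, and the contraction statement via the decoupling of $(p,-\lambda\mathbf{1}_R)$-minimum bases of $Q$ into a basis of $Q|_R$ and a $p$-minimum basis of $Q/R$, are exactly the two steps carried out in the paper. The ``main obstacle'' you flag at the end is not actually one in the paper's setting: $\trop(Q)^{\cap R}$ is defined as the slice at large $\lambda$ (the boundary part at infinity), not as a stable intersection, so your set-theoretic equivalences already handle dependent $R$ verbatim; the paper's proof likewise only establishes the support equality there, and in all its applications $R$ is independent (cf.\ remark \ref{Q}), so no extra multiplicity book-keeping is invoked.
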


\begin{remark}
  Note that, if the matroid $Q$ is realisable, then the analogue
  classical statement is well-known. See also \cite{kristin}*{section 2}
  for similar statements.
\end{remark}

\begin{proof}[Proof of lemma \ref{geometric}]

  For the first equation, let $\sigma$ be a cone in $\mf(Q)$ and let
  $\F= (\emptyset \subsetneq F_1\subsetneq \ldots \subsetneq F_r)$ be 
  the corresponding chain of flats in $Q$. Then the projection
  of $\sigma$ along $\pi_R$ is obviously given by the chain $\G$
  with $G_i = F_i \setminus R$, which is a chain of flats in $Q \setminus R$.
  Hence $\pi_R(\sigma)$ is a cone in $\mf(Q \setminus R)$. Furthermore,
  for any maximal chain $\G$ of flats in $Q \setminus R$, there is exactly
  one ``lifted'' chain $\F$, namely given by $F_i = \cl_Q(G_i)$.
  Note that $\F$ is maximal as we assume that $Q$ and $Q\setminus R$ have the same rank.
  Thus for each
  maximal cone of $\mf(Q \setminus R)$ there is exactly one maximal cone in
  $\mf(Q)$ mapping to it (with trivial lattice index) and $\pi_R$ is one-to-one
  over points in the interior of maximal cones.
  
  For the second equation, we have the following chain of equivalences.
  \begin{eqnarray*}
    & & p \in \trop(Q)^{\cap R} 
      \;\; \Leftrightarrow \;\; 
      (p, -\lambda, \ldots, -\lambda) \in \trop(Q) \text{ for large $\lambda$} \\
    & \Leftrightarrow & 
      Q_{(p, -\lambda, \ldots, -\lambda)} \text{ loopfree for large $\lambda$} \\
    & \Leftrightarrow & 
      \text{for all $a \in E$ there exists a basis $B$ of $Q$
      such that $a \in B$, $|B \cap R| = \rank_Q(R)$  }   \\
    & & \text{and $B \cap E$ is $p$-minimal     } \\
    & \Leftrightarrow & 
      \text{for all $a \in E$ there exists a $p$-minimal basis $B'$ of $Q/R$
      such that $a \in B'$} \\
    & \Leftrightarrow & 
      (Q/R)_p \text{ loopfree}
      \;\; \Leftrightarrow \;\; p \in \trop(Q/R)
  \end{eqnarray*}
  In the middle step we use that bases $B'$ of $Q/R$ are exactly obtained
  as $B' = B \cap E$, where $B$ is basis of $Q$ with $|B \cap R| = \rank_Q(R)$.
\end{proof}

We now turn to the case of elementary quotients. Using the above
description we will see that they are in fact related to modifications
in the sense of Mikhalkin (cf.\ \cite{MAppl}). 
This observation was first made by Kristin Shaw (cf.\ 
\cite{kristin}*{proposition 2.24}, to which we also refer for
further details).

\begin{proposition} \label{modification}
  Let $M$ and $N$ be matroids of rank $r$ resp.\ $r-1$ such that   
  $\trop(N)\subseteq\trop(M)$. Let $Q$ be the matroid on $E \dcup \{e\}$
  constructed in proposition \ref{quotient} with $Q \setminus e = M$
  and $Q/e = N$.
  Then $\trop(Q)$ is a modification of $\trop(M)$ along the divisor
  $\trop(N)$ in the sense of \cite{MAppl}*{section 3.3}. 
  The modification function $\varphi$ on $\trop(M)$ is given by its
  values on the vectors $V_F$, $F$ closed in $M$ as
  \[
    \varphi(V_F)= \rank_N(F) - \rank_M(F).
  \]
  In particular, the divisor of $\varphi$ is 
  $\varphi \cdot \trop(M) = \trop(N)$.
\end{proposition}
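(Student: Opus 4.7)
My strategy is to define $\varphi$ as prescribed, identify $\trop(Q)$ cone-wise (via lemma~\ref{geometric}) with the graph of $\varphi$ plus a vertical attachment over $\trop(N)$, and thereby realise $\trop(Q)$ as the modification of $\trop(M)$ along $\varphi$. Well-definedness of $\varphi$ is immediate: prescribing $\varphi(V_F)=\rank_N(F)-\rank_M(F)$ on the rays $V_F$ of $\mf(M)$ and extending linearly on each maximal cone, consistency across shared faces holds because every face of $\mf(M)$ is generated by a subset of these rays, and $\varphi$ is assigned to each such ray independently of the ambient cone.

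For $r-s=1$, the rank formula from the proof of proposition~\ref{quotient} specialises to $\rank_Q(F)=\min\{\rank_M(F),\rank_N(F)+1\}$ and $\rank_Q(F\cup\{e\})=\rank_N(F)+1$ for $F\subseteq E$. From this one deduces that, for a flat $F$ of $M$, $\cl_Q(F)=F\cup\{e\}$ if $\rank_N(F)<\rank_M(F)$ and $\cl_Q(F)=F$ otherwise; in either case the $e$-coordinate of $V_{\cl_Q(F)}$ equals $\rank_N(F)-\rank_M(F)=\varphi(V_F)$. The proof of lemma~\ref{geometric} then shows that each maximal chain $\G=(G_1\subsetneq\cdots\subsetneq G_r=E)$ of flats of $M$ lifts uniquely to the maximal chain $(\cl_Q(G_i))_i$ of flats of $Q$, and the previous computation identifies the lifted cone with the graph of $\varphi|_{\langle\G\rangle}$. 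Taking the union over all $\G$ realises the graph $\Gamma_\varphi\subseteq\trop(M)\times\R$ as a subcomplex of $\mf(Q)$.

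The remaining maximal cones of $\mf(Q)$ are those coming from maximal flags of $Q$ containing a (necessarily unique) step $F_{j_0}=F_{j_0-1}\cup\{e\}$; since $V_{F_{j_0}}$ differs from $V_{F_{j_0-1}}$ by minus the standard basis vector of the $e$-coordinate, each such cone extends an adjacent graph cone downward in the $e$-direction. A direct check, using the description of flats of $N=Q/e$ as those flats of $Q$ containing $e$, shows that the projections of these cones cover $|\trop(N)|$ and that, set-theoretically, they glue to the vertical attachment $\{(p,t):p\in|\trop(N)|,\ t\leq\varphi(p)\}$. Together with the previous step this identifies $\trop(Q)$ with the modification of $\trop(M)$ along $\varphi$ in the sense of \cite{MAppl}*{section 3.3}. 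The divisor identity $\varphi\cdot\trop(M)=\trop(N)$ then follows either directly from this identification, or more concisely from lemma~\ref{geometric}: $\trop(Q)^{\cap\{e\}}=\trop(Q/e)=\trop(N)$. The main obstacle I anticipate is in this third step: several ``infinity'' flags of $\mf(Q)$ may project to the same maximal cone of $\mf(N)$, subdividing the vertical attachment into several subcones of $\mf(Q)$, and one must verify that these pieces carry the correct multiplicities so as to match the Mikhalkin modification with weight $1$ along $\trop(N)$.
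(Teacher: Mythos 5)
Your first half runs parallel to the paper's own argument: the computation $\cl_Q(F)\in\{F,\,F\cup\{e\}\}$ for a flat $F$ of $M$ amounts to the identity $(V_F,\varphi(V_F))=V_{\cl_Q(F)}$, which places the graph of $\varphi$ inside $\trop(Q)$, and the unique lifting of maximal chains from lemma~\ref{geometric} identifies the lifted cones with the graph cones. The problem is exactly the step you flag at the end and leave open. You only identify the remaining part of $\trop(Q)$ \emph{set-theoretically} with the attachment $\{(p,t):p\in|\trop(N)|,\ t\leq\varphi(p)\}$, but both conclusions you need -- that $\trop(Q)$ is the modification in Mikhalkin's sense and that $\varphi\cdot\trop(M)=\trop(N)$ -- are statements about weighted cycles. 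Your ``more concise'' route via $\trop(Q)^{\cap \{e\}}=\trop(Q/e)=\trop(N)$ does not bypass this either: the divisor of $\varphi$ is a priori computed on $\trop(M)$, and it is only legitimate to read it off at infinity in $\trop(Q)$ once $\trop(Q)$ has been identified with the completed graph \emph{as a cycle}, weights included. So as written the proof is incomplete at precisely the anticipated point.

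The gap is genuine but easy to close, and the paper closes it without any bookkeeping of the vertical flags: every facet of the fine subdivision $\mf(Q)$ carries weight $1$; the facets whose chain has no step of the form $F_j=F_{j-1}\cup\{e\}$ are exactly the graph facets (so the non-vertical part of $\trop(Q)$ is the graph with the same weights as $\trop(M)$, using that $\pi_e$ is generically one-to-one by remark~\ref{Q} and lemma~\ref{geometric}), and all remaining facets point in direction $V_{\{e\}}$. Hence $\trop(Q)$ is a balanced cycle containing the graph of $\varphi$ with additional facets only in the direction $V_{\{e\}}$, and by the uniqueness of such a completion (\cite{AR}*{construction 3.3}) it \emph{is} the modification; the weights of the vertical part are then forced by the balancing condition and by construction agree with the divisor weights. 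In particular your worry about several ``infinity'' flags of $\mf(Q)$ subdividing the vertical attachment is harmless: weights live on generic points and are independent of the chosen polyhedral structure, so no multiplicity count over the cones of $\mf(N)$ is needed. Since all weights of $\trop(Q)$ equal $1$ and its part at infinity is $\trop(Q)^{\cap\{e\}}=\trop(N)$ by lemma~\ref{geometric}, the divisor is $\trop(N)$ with all weights $1$, i.e.\ $\varphi\cdot\trop(M)=\trop(N)$. Adding this uniqueness/balancing argument turns your proposal into a complete proof essentially identical to the paper's.
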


\begin{proof}
  Let $\varphi$ be as defined above. According to our definitions
  it satisfies 
  \[
    (V_F, \varphi(V_F)) = V_{\cl_Q(F)} \in \R^{E \dcup \{e\}},
  \]
  and therefore the graph of $\varphi$ is contained in $\trop(Q)$.
  By remark \ref{Q} we can use lemma \ref{geometric} to see that in fact $\trop(Q)$
  is the tropical completion of the graph (as in e.g.\ 
  \cite{AR}*{construction 3.3}), i.e.\ the unique tropical cycle
  containing the graph and with additional facets only in direction
  $V_{\{e\}}$. Therefore $\trop(N) = \trop(Q)^{\cap e}$ is exactly
  the divisor of $\varphi$. 
\end{proof}

Let us now collect the previous results in the following important
corollary.

\begin{corollary}
Let $M,N$ be matroids such that $\trop(N)$ is a codimension $k$ subcycle of $\trop(M)$. Then there are rational functions $\varphi_1,\ldots,\varphi_k$ such that $\varphi_1\cdots\varphi_k\cdot\trop(M)=\trop(N)$.
\end{corollary}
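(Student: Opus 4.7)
The plan is to combine Corollary \ref{zwischenmatroid} with Proposition \ref{modification} and argue inductively. Since $\trop(N)$ has codimension $k$ in $\trop(M)$ and the dimension of a matroid variety equals the rank of the underlying matroid, we have $\rank(M)-\rank(N)=k$. Corollary \ref{zwischenmatroid} therefore produces a chain of matroids $N = M_0, M_1, \ldots, M_k = M$ on the same ground set $E$ with $\rank_{M_i}(E) = \rank(N)+i$ and $\trop(M_i) \subseteq \trop(M_{i+1})$. In particular, the rank jumps by exactly one at each step, so $M_i$ is an elementary quotient of $M_{i+1}$.

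For each $i \in \{0,\ldots,k-1\}$ I apply Proposition \ref{modification} to the elementary quotient $M_i$ of $M_{i+1}$. This yields a rational function $\varphi_{i+1}$ on $\trop(M_{i+1})$, determined by
\[
  \varphi_{i+1}(V_F) = \rank_{M_i}(F) - \rank_{M_{i+1}}(F)
\]
for $F$ closed in $M_{i+1}$, and satisfying $\varphi_{i+1} \cdot \trop(M_{i+1}) = \trop(M_i)$.

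Relabeling these as $\varphi_1,\ldots,\varphi_k$ and reading the product $\varphi_1\cdots\varphi_k\cdot\trop(M)$ as the iterated intersection $\varphi_1\cdot(\varphi_2\cdot(\cdots (\varphi_k\cdot\trop(M))\cdots))$, a straightforward induction along the chain yields
\[
  \varphi_k\cdot\trop(M)=\trop(M_{k-1}),\quad \varphi_{k-1}\cdot\trop(M_{k-1})=\trop(M_{k-2}),\quad\ldots,\quad \varphi_1\cdot\trop(M_1)=\trop(M_0)=\trop(N).
\]

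The only mild subtlety is that the function $\varphi_j$ supplied by Proposition \ref{modification} is a priori a rational function on $\trop(M_j)$ rather than on the ambient $\trop(M)$; this is why the iterated (right-to-left) interpretation of the product is essential, as it keeps each successive intersection inside the correct ambient cycle where the next function is defined. If one prefers, one can extend each $\varphi_j$ by piecewise-linearity on the fan $\mf(M)$ to obtain a rational function on all of $\trop(M)$, but this is not needed for the stated conclusion.
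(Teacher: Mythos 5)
Your proposal is correct and follows exactly the route the paper intends: the corollary is stated as an immediate consequence of corollary \ref{zwischenmatroid} (the chain of elementary quotients) combined with proposition \ref{modification} (each elementary quotient is cut out by the modification function), applied iteratively. Your remark on the functions being defined only on the intermediate matroid varieties, handled by the right-to-left reading of the product, is a sensible clarification but does not change the argument.
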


\section{The intersection product on matroid varieties}

Our next aim is to use the results of the previous section 
to find rational functions cutting out the diagonal $\Delta_{\trop(M)}$ in the product $\trop(M)\times\trop(M)$.
In fact, the only thing which is left to do is to observe that both 
$\Delta_{\trop(M)}$ and $\trop(M)\times\trop(M)$ are indeed 
matroid varieties.
We know already from lemma \ref{directsum} that 
$\trop(M)\times\trop(M) = \trop(M \oplus M)$.
Next, we give the necessary definition concerning 
the diagonal $\Delta_{\trop(M)}$. Here, $\Delta_{\trop(M)}$
denotes the push-forward of $\trop(M)$ along the map
$\trop(M) \rightarrow \trop(M) \times \trop(M),$ $x \mapsto (x,x)$.

\begin{definition}
Let $M$ be a matroid on the set $E$. We define
$\diagM$ to be the matroid having the ground set $E \dcup E$ 
and the rank function $\rank_{\diagM}(A\dcup B):=\rank_M(A\cup B)$.
\end{definition}

The criteria of theorem \ref{criterionrank} can be easily checked for the
function $\rank_{\diagM}$, so this really defines a matroid.
It is also easy to see that $\{F\dcup F: F \text{ flat in } M\}$ 
is the set of flats in $\diagM$. Therefore, $\betrag{\mf(\diagM)}=\betrag{\Delta_{\bfan}}$, and we can conclude that the cycles 
\[ 
  \trop(\diagM) = \Delta_{\trop(M)}
\] 
are equal. Now we are ready to state the following main result.

\begin{corollary} \label{diagonal}
  Let $M$ be a matroid of rank $r$. Then there exist piecewise linear
  functions $\varphi_1, \ldots, \varphi_r$ on $\trop(M) \times \trop(M)$
  which cut out the diagonal $\Delta_{\trop(M)}$, i.e.\
  \[
    \Delta_{\trop(M)} = \varphi_1 \cdots \varphi_r \cdot \trop(M) \times \trop(M).
  \]
  In fact, for the containment 
  $\trop(\diagM) \subseteq \trop (M \oplus M)$ the intermediate matroids 
  $M_i$ from corollary \ref{zwischenmatroid} can be computed to have
  rank function
  \[
    \rank_{M_i}(A \dcup B) = 
      \min\{\rank_M(A\cup B) + i,\; \rank_M(A)+\rank_M(B)\},
  \]
  and following proposition \ref{modification} we can choose $\varphi_i$
  to be given by
  \[
    \varphi_i(V_F)=\begin{cases}
      -1, & \text{ if }\rank_M(A)+\rank_M(B)-\rank_M(A\cup B)\geq i \\ 
      0,  & \text{ else}  
    \end{cases},
  \] 
  where $F = A \dcup B$ is a flat of $M \oplus M$.

\end{corollary}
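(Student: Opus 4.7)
The plan is to apply Corollary \ref{zwischenmatroid} to the containment $\trop(\diagM) \subseteq \trop(M \oplus M)$ (which has codimension $r$, since $\dim \trop(\diagM) = r$ and $\dim \trop(M \oplus M) = 2r$) and then realize each step of the resulting chain as a modification via Proposition \ref{modification}.

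First I would record the rank formula for the intermediate matroids. Corollary \ref{zwischenmatroid} yields a chain $M_0 = \diagM, M_1, \ldots, M_r = M \oplus M$ with $\trop(M_{i-1}) \subseteq \trop(M_i)$, and the explicit formula for $M_i$ stated directly before that corollary gives
\[
  \rank_{M_i}(A \dcup B) = \min\{\rank_{\diagM}(A \dcup B) + i,\; \rank_{M \oplus M}(A \dcup B)\} = \min\{\rank_M(A \cup B) + i,\; \rank_M(A) + \rank_M(B)\},
\]
using $\rank_{\diagM}(A \dcup B) = \rank_M(A \cup B)$ (from the definition of $\diagM$) and $\rank_{M \oplus M}(A \dcup B) = \rank_M(A) + \rank_M(B)$ (direct sum of matroids). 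This is the claimed formula.

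For each $i$ from $1$ to $r$, $M_{i-1}$ is an elementary quotient of $M_i$, so Proposition \ref{modification} produces a piecewise linear function $\varphi_i$ with $\varphi_i \cdot \trop(M_i) = \trop(M_{i-1})$, determined on flats $F$ of $M_i$ by $\varphi_i(V_F) = \rank_{M_{i-1}}(F) - \rank_{M_i}(F)$. Using the same formula on all flats $F = A \dcup B$ of $M \oplus M$ extends $\varphi_i$ to a piecewise linear function on the full product $\trop(M) \times \trop(M)$, and by construction its restriction to $\trop(M_i)$ still agrees with the modification function. Iterating the divisor operation along the chain then yields $\varphi_1 \cdots \varphi_r \cdot \trop(M) \times \trop(M) = \trop(\diagM) = \Delta_{\trop(M)}$.

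It remains to compute $\varphi_i(V_F)$ explicitly. Setting $d := \rank_M(A) + \rank_M(B) - \rank_M(A \cup B) \geq 0$ (submodularity), the first term of the minimum defining $\rank_{M_i}$ is attained precisely when $d \geq i$, and for $\rank_{M_{i-1}}$ precisely when $d \geq i-1$. Thus if $d \geq i$ both minima are at the first term and the difference is $-1$, while if $d \leq i-1$ both are at the second term and the difference is $0$; the boundary case $d = i-1$ is immediate since the two terms in $\rank_{M_{i-1}}$ coincide there. This reproduces the formula in the statement. The main subtlety I expect is the bookkeeping in this case analysis, together with verifying that extending the modification function to the ambient $\trop(M \oplus M)$ via the same formula is compatible with iterated divisors along the chain.
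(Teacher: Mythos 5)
Your proposal is correct and follows exactly the route the paper intends: the corollary is presented as an immediate consequence of corollary \ref{zwischenmatroid} applied to $\trop(\diagM)\subseteq\trop(M\oplus M)$ (codimension $r$) together with proposition \ref{modification}, with the rank formula and the case analysis on $d=\rank_M(A)+\rank_M(B)-\rank_M(A\cup B)$ computed just as you do. Your extra care about extending each modification function to all of $\trop(M\oplus M)$ and restricting along the chain is a harmless elaboration of points the paper leaves implicit.
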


As an immediate consequence of this fact, in complete analogy to 
\cite[definition 9.3]{AR} and \cite[definition 1.16]{lars}, we can now define an intersection product of cycles
in matroid varieties.

\begin{definition}
\label{intersectionproduct}
Let $C$, $D$ be subcycles of $\trop(M)$ of codimension $s$ and $p$. We define the \emph{intersection product} $C\cdot D\in\Zy_{r-s-p}(\trop(M))$ of the cycles $C$ and $D$ in $\trop(M)$ as \[C\cdot D=\pi_{\ast}(\varphi_r\cdots\varphi_1\cdot C\times D),\] where $\pi:\trop(M)\times\trop(M)\rightarrow\trop(M)$ is the projection to the first factor.
\end{definition}

Note that here and in the following, we a priori stick to the 
definition of the functions $\varphi_i$
in the last part of corollary \ref{diagonal}. However, we will see later
that the definition is independent of all choices. 
There is only
one lemma to prove before we can list the basic properties of
the intersection product.

\begin{lemma}
\label{containedindiagonal}
Let $C,D$ be cycles in $\trop(M)$. Then $\varphi_r\cdots\varphi_1\cdot C\times D$ is a 
subcycle of $\diagonal$.
In particular, the definition of $C \cdot D$ does not depend on the chosen projection. 
\end{lemma}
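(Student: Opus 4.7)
The plan is to proceed by induction along the chain of intermediate matroids from corollary \ref{zwischenmatroid}. Write $\Delta_M = M_0 \subseteq M_1 \subseteq \cdots \subseteq M_r = M \oplus M$; the function $\varphi_j$ from corollary \ref{diagonal} is set up so that $\varphi_j \cdot \trop(M_j) = \trop(M_{j-1})$, and hence iterating gives $\varphi_1 \cdots \varphi_r \cdot \trop(M)\times\trop(M) = \Delta_{\trop(M)}$. I would prove by induction on $k$ that
\[
\varphi_{r-k+1} \cdots \varphi_{r-1} \varphi_r \cdot C\times D \;\text{ is a subcycle of }\; \trop(M_{r-k}).
\]
The case $k=0$ is immediate from $C\times D \subseteq \trop(M)\times\trop(M) = \trop(M_r)$, and the case $k=r$ yields the statement of the lemma. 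Consequently, everything reduces to the following key claim: for an elementary quotient $\trop(M_{j-1}) \subseteq \trop(M_j)$ with associated modification function $\varphi_j$ as in proposition \ref{modification}, and for any subcycle $Y \subseteq \trop(M_j)$, the divisor $\varphi_j \cdot Y$ is supported inside $|\trop(M_{j-1})|$.

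To prove this key claim I would use the locality of the divisor construction. Pick a point $p \in |\trop(M_j)|\setminus |\trop(M_{j-1})|$; it suffices to show that $p$ does not lie in the support of $\varphi_j \cdot Y$. By lemma \ref{localmatroidfans} the stars of $\trop(M_j)$ and $\trop(M_{j-1})$ at $p$ are again matroid varieties, $\trop((M_j)_p)$ and $\trop((M_{j-1})_p)$ respectively. Since $p \notin \trop(M_{j-1})$, the localised matroid $(M_{j-1})_p$ contains a loop, so $\Star_{\trop(M_{j-1})}(p) = 0$. Because the divisor operation commutes with taking stars, this gives the local vanishing $\varphi_j \cdot \Star_{\trop(M_j)}(p) = 0$. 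From this I would deduce that $\varphi_j$ is in fact locally linear on $\trop(M_j)$ at $p$ — informally, the non-smooth locus of $\varphi_j$ on $\trop(M_j)$ equals $|\trop(M_{j-1})|$, because the construction in proposition \ref{modification} realises $\varphi_j$ as the piecewise linear function whose corner divisor is exactly $\trop(M_{j-1})$. Once $\varphi_j$ is linear near $p$, its restriction to any subcycle $Y$ passing through $p$ is linear, and the divisor $\varphi_j \cdot Y$ has no contribution at $p$, as desired.

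The main obstacle is the passage from "trivial divisor on $\trop(M_j)$ near $p$'' to "$\varphi_j$ is linear near $p$''. This requires a local argument using the explicit formula for $\varphi_j$ in corollary \ref{diagonal} together with the description of flats of the enveloping matroid $Q$ from remark \ref{Q}: one needs to verify that adjacent maximal cones of $\mf(M_j)$ glued along a codimension-one face outside $|\trop(M_{j-1})|$ carry the same linear extension of $\varphi_j$. Given the explicit combinatorial description of the $\varphi_j$ via the defect $\sigma(A,B) = \rank_M(A) + \rank_M(B) - \rank_M(A\cup B)$, this is a direct (if somewhat tedious) matroid-theoretic check. Once the key claim is established and the induction runs, the second assertion of the lemma is immediate: since $\pi_1$ and $\pi_2$ agree on $|\Delta_{\trop(M)}|$, they push the intersection cycle to the same cycle on $\trop(M)$.
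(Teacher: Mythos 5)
Your overall strategy is the same as the paper's: peel off one function at a time along the chain of intermediate matroids and reduce everything to the key claim that for any subcycle $Y$ of $\trop(M_j)$ the divisor $\varphi_j\cdot Y$ is supported on $|\trop(M_{j-1})|=|\varphi_j\cdot\trop(M_j)|$; your treatment of the second assertion (the two projections agree on $|\diagonal|$) is fine. The gap sits exactly at the step you yourself flag as the ``main obstacle'', and it is not a routine verification. The implication ``the divisor of $\varphi_j$ on $\trop(M_j)$ vanishes near $p$, hence $\varphi_j$ is affine linear near $p$ on $|\trop(M_j)|$'' is false for general balanced cycles: on the one-dimensional balanced fan with rays $\pm e_1,\pm e_2$ (all weights $1$), the function taking values $1,1,-1,-1$ on these rays has zero divisor but is not linear near the origin. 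What makes the implication valid here is that matroid varieties are \emph{locally irreducible} (section 2 of the paper), combined with the fact that on a locally irreducible cycle the support of $\varphi\cdot X$ equals the whole locus of non-linearity of $\varphi$; this is precisely the input \cite[1.2.31]{disshannes} on which the paper's proof rests, and you neither invoke it nor prove a substitute.

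Your proposed substitute --- checking that adjacent maximal cones of $\mf(M_j)$ glued along a codimension-one face outside $|\trop(M_{j-1})|$ carry the same linear extension of $\varphi_j$ --- is only asserted, not carried out, and even if done it would not suffice on its own: the point $p$ may lie in a cell of higher codimension, and to conclude that $\varphi_j$ is affine on a neighbourhood of $p$ in $|\trop(M_j)|$ you must patch the facet-wise linear extensions across walls of $\Star_{\trop(M_j)}(p)=\trop((M_j)_p)$. That patching needs the facets of this local fan to be connected through codimension one; otherwise different parts of the star could carry different linear functions, the restriction of $\varphi_j$ to a subcycle $Y$ through $p$ could still be non-linear, and $\varphi_j\cdot Y$ could acquire weight at $p$ even though $\varphi_j\cdot\Star_{\trop(M_j)}(p)=0$. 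This connectivity/irreducibility of the local matroid fans is again exactly the structural fact the paper supplies. So your reduction is correct and matches the paper, but the crux --- local irreducibility of matroid fans plus ``support of the divisor equals the non-linearity locus'' --- is missing; once you add it (or cite \cite[1.2.31]{disshannes} as the paper does), your induction goes through.
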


\begin{proof}
We prove by induction over $k$ that 
\[\betrag{\varphi_k\cdots\varphi_1\cdot C\times D}\subseteq\betrag{\varphi_k\cdots\varphi_1\cdot\trop(M)\times\trop(M)}.\] 
for all $k = 1, \ldots, r$, where the case $k=r$ proves the claim.
It is clear that \[ \betrag{\varphi_k\cdot\varphi_{k-1}\cdots\varphi_1\cdot C\times D}\subseteq |{\varphi_k}_{\mid\betrag{\varphi_{k-1}\cdots\varphi_1\cdot C\times D}} |,\] where the right hand side is the locus of non-linearity of the restriction of $\varphi_k$ to the support of $\varphi_{k-1}\cdots\varphi_1\cdot C\times D$. By the induction hypothesis, the right hand side is contained in \[|{\varphi_k}_{\mid\betrag{\varphi_{k-1}\cdots\varphi_1\cdot \trop(M)\times\trop(M)}}|.\] Since $\varphi_{k-1}\cdots\varphi_1\cdot \trop(M)\times\trop(M)$ is a matroid variety, and hence locally irreducible, it follows by \cite[1.2.31]{disshannes} that \[|{\varphi_k}_{\mid\betrag{\varphi_{k-1}\cdots\varphi_1\cdot \trop(M)\times\trop(M)}}|=\betrag{\varphi_k\cdots\varphi_1\cdot\trop(M)\times\trop(M)}.\] 
\end{proof}

\begin{theorem} \label{properties}
For all subcycles $C, D, E$ of $\trop(M)$, the following properties hold:
\begin{enumerate}
\item $\betrag{C\cdot D}\subseteq \betrag{C}\cap\betrag{D}$.
\item If $C$ and $D$ are fans, then $C\cdot D$ is a fan, too.
\item $(\varphi\cdot C)\cdot D=\varphi\cdot (C\cdot D)$ for any Cartier divisor $\varphi$ on $C$.
\item $C\cdot\trop(M)=C$.
\item $C\cdot D=D\cdot C$.
\item If $C=\psi_1\cdots\psi_s\cdot\trop(M)$, then $C\cdot D=\psi_1\cdots\psi_s\cdot D$.
\item $(C\cdot D)\cdot E=C\cdot (D\cdot E)$.
\item $(C+D)\cdot E=C\cdot E+D\cdot E$.
\end{enumerate}
\end{theorem}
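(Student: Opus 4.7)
The plan is to dispose of the structural items quickly and reserve the effort for (4) and (7). Items (1), (2) and (8) are essentially immediate: (1) combines the set-theoretic containment $|\varphi_r \cdots \varphi_1 \cdot C \times D| \subseteq |C| \times |D|$ with lemma \ref{containedindiagonal}, which forces the support to also lie on the diagonal and hence to project into $|C| \cap |D|$; (2) follows because the $\varphi_i$ from corollary \ref{diagonal}, the product $C \times D$, and the projection $\pi$ are all conewise linear on fan structures; (8) is the bilinearity of Cartier divisor intersection and of push-forward.

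For (3), lemma \ref{containedindiagonal} places $\varphi_r \cdots \varphi_1 \cdot C \times D$ on the diagonal, so the two pull-backs $\pi_1^* \varphi$ and $\pi_2^* \varphi$ of $\varphi$ agree on its support. The projection formula $\pi_*(\pi_1^* \varphi \cdot Z) = \varphi \cdot \pi_* Z$ together with the commutativity of Cartier divisor intersection then yields
\[
(\varphi \cdot C) \cdot D = \pi_*\bigl(\varphi_r \cdots \varphi_1 \cdot (\varphi \cdot C) \times D\bigr) = \pi_*\bigl(\pi_1^*\varphi \cdot \varphi_r \cdots \varphi_1 \cdot C \times D\bigr) = \varphi \cdot (C \cdot D).
\]
For (5), the matroid $\diagM$ is symmetric under swapping the two copies of $E$, so the explicit formula in corollary \ref{diagonal} shows that each $\varphi_i$ is invariant under the involution $\sigma : \trop(M) \times \trop(M) \to \trop(M) \times \trop(M)$ exchanging the factors. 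Since $\sigma_*(C \times D) = D \times C$ and $\pi_1$, $\pi_2$ agree on the diagonal, $C \cdot D = D \cdot C$ follows.

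For (4), I would prove the stronger statement $\varphi_r \cdots \varphi_1 \cdot C \times \trop(M) = \delta_* C$, where $\delta : \trop(M) \to \trop(M) \times \trop(M)$, $x \mapsto (x,x)$; as $\pi \circ \delta = \id$, this immediately implies $C \cdot \trop(M) = C$. By lemma \ref{containedindiagonal} the left-hand side is supported on $\delta(\trop(M))$, so it is of the form $\delta_* X$ for a unique cycle $X$ on $\trop(M)$, and I would verify $X = C$ by a direct weight check on maximal cones of $C$, using the explicit values of $\varphi_i$ on vectors $V_{F \dcup F}$ given in corollary \ref{diagonal} (reducing everything to the single number $\rank_M(F)$). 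Item (6) then follows by iteratively applying (3) to pull each $\psi_j$ outside the intersection product, followed by (4) and (5).

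Associativity (7) is the main obstacle and calls for a passage to the triple product $\trop(M)^3 \cong \trop(M \oplus M \oplus M)$. The small diagonal $\{(x,x,x)\}$ is again a matroid variety (the argument of the $\diagM$ construction extends to three copies) and can be cut out by $2r$ piecewise linear functions obtained by applying corollary \ref{diagonal} twice. Both $(C \cdot D) \cdot E$ and $C \cdot (D \cdot E)$ can then be written as the push-forward to the first factor of a $2r$-fold divisor product applied to $C \times D \times E$; their equality follows from the commutativity of Cartier divisor intersection in $\trop(M)^3$, from (3) used to commute intermediate push-forwards past divisor intersections, and from the triple-product analogue of (4) ensuring that either bracketing produces the same cycle on the small diagonal. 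The delicate bookkeeping --- verifying that the two bracketings genuinely correspond to the same collection of rational functions, or at worst to two collections whose full product agrees --- is where I expect the real work to lie.
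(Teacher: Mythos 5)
Your handling of (1), (2), (3), (5), (6) and (8) matches the paper's route (the paper simply observes that these follow as in the $\R^r$-case of Allermann--Rau, using lemma \ref{containedindiagonal} for (1) and the symmetry of the explicit $\varphi_i$ for (5)), and your triple-product outline for (7) is the standard argument that the paper also invokes by reference; as written, though, you explicitly defer the bookkeeping for (7), so that item is a sketch rather than a proof.

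The genuine gap is (4). Your plan is to prove the stronger identity $\varphi_r\cdots\varphi_1\cdot (C\times \trop(M))=\delta_*C$ by a ``direct weight check on maximal cones of $C$, using the explicit values of $\varphi_i$ on the $V_{F\dcup F}$'' --- but this check is exactly the hard point, and it does not reduce to ray values. The weight of a facet of an intermediate cycle $\varphi_k\cdots\varphi_1\cdot(C\times\trop(M))$ is computed from the balancing sums over all adjacent facets of that intermediate cycle, hence from the local polyhedral structure and weights of the arbitrary subcycle $C$; these intermediate cycles are not matroid varieties and admit no explicit description (in the $\R^n$-case of \cite{AR} the analogous check works only because the functions $\max\{x_i,y_i\}$ allow one to identify each intermediate cycle with $C\times\R^{n-k}$ sitting along a partial diagonal, and no such product decomposition is available for the matroidal $\varphi_i$). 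The paper avoids this computation altogether: using (8) it reduces to irreducible $C$, so that by (1) and irreducibility $C\cdot\trop(M)=\lambda_C\cdot C$ for some $\lambda_C\in\Z$; it then shows all $\lambda_P$ for points $P$ coincide with $\lambda_{\{0\}}$ by passing to recession fans ($P\times\trop(M)$ has recession fan $\{0\}\times\trop(M)$ and degrees of zero-dimensional divisor products are preserved, \cite{disslars}*{proposition 2.2.2}); next it picks rational functions $\psi_1,\ldots,\psi_{\dim C}$ with $\psi_1\cdots\psi_{\dim C}\cdot C\neq 0$ and uses (3) to force $\lambda_C=\lambda_{\{0\}}$; finally $\lambda_{\trop(M)}=1$ directly from corollary \ref{diagonal}. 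Without this (or an honest inductive identification of the intermediate cycles), your claim $\varphi_r\cdots\varphi_1\cdot(C\times\trop(M))=\delta_*C$, and hence (4) and your derivation of (6), is unsupported.
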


\begin{proof}
(1) follows directly from lemma \ref{containedindiagonal}. 
Everything else except (4) follows either directly or can be deduced in
exactly the same way as in the $\R^r$-case (cf.\ 
\cite[1.5.2, 1.5.5, 1.5.6, 1.5.9]{disshannes} or \cite[section 9]{AR}). \\
It remains to prove (4).
By (8) it suffices to prove (4) for irreducible cycles $C$. We know by (1) that $\betrag{C\cdot\trop(M)}\subseteq\betrag{C}$; hence the irreducibility of $C$ implies that $C\cdot \trop(M)=\lambda_C\cdot C$  for some $\lambda_C\in\Z$. We first note that the factors $\lambda_P$ are the same for every point $P$ in $\trop(M)$: For any point $P$, the recession fan of $P\times \trop(M)$ is $\{0\}\times\trop(M)$; thus we know by \cite[proposition 2.2.2]{disslars} that \[\lambda_P=\deg(\varphi_r\cdots\varphi_1\cdot P\times \trop(M))= \deg(\varphi_r\cdots\varphi_1\cdot\{0\}\times \trop(M))=\lambda_{\{0\}}.\]  
Now, as it was done in the proof of \cite[proposition 1.4.15]{disshannes}, we choose rational functions $\psi_1\ldots,\psi_{\dim(C)}$ such that $\psi_1\cdots\psi_{\dim(C)}\cdot C\neq 0$. Then (3) implies that
\begin{eqnarray*}
\lambda_C\cdot(\psi_1\cdots\psi_{\dim(C)}\cdot C)&=&\psi_1\cdots\psi_{\dim(C)}\cdot (C\cdot\trop(M))\\&=&(\psi_1\cdots\psi_{\dim(C)}\cdot C)\cdot\trop(M)\\&=&\lambda_{\{0\}}\cdot(\psi_1\cdots\psi_{\dim(C)}\cdot C).
\end{eqnarray*}
Hence $\lambda_C=\lambda_{\{0\}}$ for all cycles $C$. As $\lambda_{\trop(M)}=1$, it follows that $C\cdot\trop(M) =C$ for every $C$.
\end{proof}

\begin{remark}
\label{independent}
It follows from theorem \ref{properties} (6) that our intersection product is independent of the choice of rational functions describing the diagonal $\diagonal$, 
as each intersection product can be calculated as 
\[
  C \cdot D = \pi_*(\diagonal \cdot C \times D),
\]
where the right hand side is an intersection product of cycles on 
$\trop(M \oplus M)$.
This is not only satisfactory, but also what we need to prove the 
following lemmas.
\end{remark}

\begin{lemma} \label{automorphism}
  Let $\alpha : \trop(M) \rightarrow \trop(M')$ be a tropical isomorphism of matroid 
  varieties and let $C$ and $D$ be two arbitrary cycles in $\trop(M)$.
  Then the followings holds.
  \[
    \alpha_*(C \cdot D) = \alpha_*(C) \cdot \alpha_*(D)
  \]
\end{lemma}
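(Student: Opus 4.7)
The plan is to reduce the statement to the functoriality of the intersection product under the isomorphism $\alpha \times \alpha : \trop(M) \times \trop(M) \to \trop(M') \times \trop(M')$, combined with the flexibility gained from remark \ref{independent}.

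First I would observe that $\alpha \times \alpha$ sends the diagonal $\Delta_{\trop(M)}$ isomorphically to $\Delta_{\trop(M')}$, and more generally $(\alpha \times \alpha)_*(C \times D) = \alpha_*(C) \times \alpha_*(D)$. Moreover, if $\pi, \pi'$ denote the projections onto the first factor of the respective products, then $\alpha \circ \pi = \pi' \circ (\alpha \times \alpha)$, so $\alpha_* \circ \pi_* = \pi'_* \circ (\alpha\times\alpha)_*$ on cycles.

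Next, choose rational functions $\varphi_1', \ldots, \varphi_r'$ on $\trop(M') \times \trop(M')$ cutting out $\Delta_{\trop(M')}$ as in corollary \ref{diagonal}. Their pullbacks $\varphi_i := \varphi_i' \circ (\alpha \times \alpha)$ are rational functions on $\trop(M) \times \trop(M)$, and since $\alpha \times \alpha$ is an isomorphism (in particular, bijective and integral-affine on cones) these pullbacks cut out $\Delta_{\trop(M)}$. By remark \ref{independent} I may use this particular choice of functions to compute the intersection on $\trop(M)$:
\[
  C \cdot D \;=\; \pi_*\bigl(\varphi_r \cdots \varphi_1 \cdot (C \times D)\bigr).
\]
Now applying $\alpha_*$ and using the naturality of push-forward with projections gives
\[
  \alpha_*(C \cdot D) \;=\; \pi'_*\bigl((\alpha \times \alpha)_*(\varphi_r \cdots \varphi_1 \cdot (C \times D))\bigr).
\]
The key step is then the projection formula for a pullback of a rational function along an isomorphism: since $\varphi_i = \varphi_i' \circ (\alpha \times \alpha)$, we have $(\alpha \times \alpha)_*(\varphi_i \cdot X) = \varphi_i' \cdot (\alpha \times \alpha)_*(X)$ for any cycle $X$ on $\trop(M)\times\trop(M)$ (this is standard — for an isomorphism it is immediate from the definitions of $\varphi \cdot X$ in terms of weights on facets). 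Iterating and applying $(\alpha \times \alpha)_*(C \times D) = \alpha_*(C) \times \alpha_*(D)$ yields
\[
  \alpha_*(C \cdot D) \;=\; \pi'_*\bigl(\varphi_r' \cdots \varphi_1' \cdot (\alpha_*(C) \times \alpha_*(D))\bigr) \;=\; \alpha_*(C) \cdot \alpha_*(D),
\]
where the last equality is the definition of the intersection product on $\trop(M')$.

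The only point that requires some care is the projection formula in the third paragraph; but since $\alpha \times \alpha$ is a tropical isomorphism — bijective on supports and preserving the integral-affine structure — both $\varphi_i \cdot X$ and its image are computed cone-by-cone with matching weights, so the formula reduces to a direct check on maximal cones.
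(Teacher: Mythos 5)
Your proposal is correct and follows essentially the same route as the paper: both arguments combine remark \ref{independent} (freedom in the choice of functions cutting out the diagonal) with the projection formula along the isomorphism $\alpha\times\alpha$ and the identity $(\alpha\times\alpha)_*(C\times D)=\alpha_*(C)\times\alpha_*(D)$. The only cosmetic difference is that you pull back the functions from $\trop(M')\times\trop(M')$, whereas the paper pushes the situation forward using theorem \ref{properties}(6) applied to $\beta=\alpha\times\alpha$, $\Delta_{\trop(M)}$ and $C\times D$; for an isomorphism these are the same computation.
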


\begin{proof}
  If $C=\psi_1\cdots\psi_s\cdot\trop(M)$ is cut out by rational functions,
  the claim follows from theorem \ref{properties} (6) and the projection formula.
  We apply this to $\beta := \alpha \times \alpha$ (the corresponding isomorphism between
  $\trop(M) \times \trop(M)$ and $\trop(M') \times \trop(M')$) and the cycles $\diagonal$
  and $C \times D$. 
  By the previous remark, this suffices to prove the claim.
\end{proof}

\begin{lemma}
\label{crossproduct}
Let $A_1,B_1$ be cycles in $\trop(M_1)$ and let $A_2, B_2$ be cycles in $\trop(M_2)$. Then 
\[(A_1\times A_2)\cdot (B_1\times B_2)=(A_1\cdot B_1)\times (A_2\cdot B_2).\]
\end{lemma}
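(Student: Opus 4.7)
The plan is to reduce the statement to two ``folklore'' compatibility statements -- intersection product and push-forward both distribute over cartesian products of cycles -- once the relevant diagonal has been identified as a cartesian product of diagonals.

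Set $M:=M_1\oplus M_2$. The first step is to observe that by Lemma \ref{directsum} we have $\trop(M)=\trop(M_1)\times\trop(M_2)$, and a direct check at the level of rank functions shows that $\diagM$ coincides with $\Delta_{M_1}\oplus\Delta_{M_2}$ after reshuffling the ground set as $(E_1\dcup E_2)\dcup(E_1\dcup E_2)\leftrightarrow(E_1\dcup E_1)\dcup(E_2\dcup E_2)$. Let $\sigma$ denote the corresponding coordinate permutation; it defines a tropical isomorphism
\[
\sigma:\trop(M\oplus M)\xrightarrow{\sim}\trop(M_1\oplus M_1)\times\trop(M_2\oplus M_2)
\]
sending $\Delta_{\trop(M)}$ to $\Delta_{\trop(M_1)}\times\Delta_{\trop(M_2)}$ and $(A_1\times A_2)\times(B_1\times B_2)$ to $(A_1\times B_1)\times(A_2\times B_2)$.

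Next I would invoke remark \ref{independent} together with lemma \ref{automorphism} to compute the intersection product via $\sigma$: choose rational functions $\varphi_1,\ldots,\varphi_r$ on $\trop(M_1\oplus M_1)$ cutting out $\Delta_{\trop(M_1)}$, and $\psi_1,\ldots,\psi_s$ on $\trop(M_2\oplus M_2)$ cutting out $\Delta_{\trop(M_2)}$, provided by corollary \ref{diagonal}. Pulled back to $\trop(M_1\oplus M_1)\times\trop(M_2\oplus M_2)$ via the two projections, their combined product cuts out $\Delta_{\trop(M_1)}\times\Delta_{\trop(M_2)}$, and hence (after transport by $\sigma^{-1}$) is a valid choice of rational functions for the diagonal of $\trop(M)$.

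Since each $\varphi_i$ is constant along the second factor of $\trop(M_1\oplus M_1)\times\trop(M_2\oplus M_2)$ and each $\psi_j$ along the first, the standard compatibility of Cartier intersection with cartesian products gives
\[
\Bigl(\prod_i\varphi_i\Bigr)\Bigl(\prod_j\psi_j\Bigr)\cdot(A_1\times B_1)\times(A_2\times B_2)=\Bigl(\prod_i\varphi_i\cdot A_1\times B_1\Bigr)\times\Bigl(\prod_j\psi_j\cdot A_2\times B_2\Bigr).
\]
Applying the push-forward along the projection to the first factor of $\trop(M)\times\trop(M)$ -- which, after $\sigma$, factors as the product of the two projections $\pi_i:\trop(M_i)\times\trop(M_i)\to\trop(M_i)$ -- and using that push-forward also distributes over cartesian products, the right-hand side becomes $(A_1\cdot B_1)\times(A_2\cdot B_2)$ by definition \ref{intersectionproduct}. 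The main obstacle is not conceptual but rather the careful bookkeeping around $\sigma$ and the verification of the matroid-level identity $\diagM=\Delta_{M_1}\oplus\Delta_{M_2}$; the two distributivity facts are standard and can be cited from \cite{AR} or \cite{disshannes}.
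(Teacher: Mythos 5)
Your proof is correct and follows essentially the same route as the paper's: both reduce to the identity $\Delta_{\trop(M_1)}\times\Delta_{\trop(M_2)}=\Delta_{\trop(M_1)\times\trop(M_2)}$ (after permuting coordinates), invoke remark \ref{independent} together with theorem \ref{properties}~(6) to compute with functions pulled back from the two factors, and conclude via the standard compatibility of Cartier intersection and push-forward with cartesian products. Your version merely makes explicit the coordinate permutation $\sigma$ and the matroid-level check $\Delta_M=\Delta_{M_1}\oplus\Delta_{M_2}$, which the paper leaves implicit.
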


\begin{proof}
If $A_1$ and $A_2$ are cut out by rational functions, the claim follows from theorem \ref{properties} (6). The general statement follows from remark \ref{independent} using
the fact that (after permuting the coordinates)
$\Delta_{\trop(M_1)} \times \Delta_{\trop(M_2)} =\Delta_{\trop(M_1)\times\trop(M_2)}$. 
\end{proof}

\begin{remark}
  Let $\trop(N), \trop(N')$ be two matroid varieties contained
  in third matroid variety $\trop(M)$. 
  So far, we were not able to find an easy matroid-theoretic
  description of the intersection product $\trop(N) \cdot \trop(N')$. 
  In general, the product is not just a matroid variety again.
  The easiest example where we at least get negative weights is
  the self-intersection of the straight line contained in the plane 
  $\max\{0,x,y,z\} \cdot \R^3$
  (cf. \cite{AR}*{example 3.10}). 
  In our setup, this is given by
  $\trop(N) \subseteq \trop(M)$, where $M$ is the uniform matroid
  of rank $3$ on $4$ elements (cf. example \ref{lnk}) and $N$
  is the matroid with lattice of flats $\emptyset, \{1,2\},
  \{3,4\}, \{1,2,3,4\}$. It is easy to check that the self-intersection
  of $\trop(N)$ in $\trop(M)$ is $\R\cdot(1,1,1,1)$ with weight $-1$. 
  
  To find a general description of $\trop(N) \cdot \trop(N')$, one should
  probably compactify the problem in $\T\PP^n$, but this causes other 
  difficulties. The only case which is understood so far is $\trop(M) = \R^n$.
  In this case, we form the \emph{matroid intersection} $N \wedge N'$
  (cf.\ \cite{White}*{section 7.6}).
  The bases of $N \wedge N'$ are the minimal sets in
  \[
    \{B \cap B' : B \text{ basis of } N, B' \text{ basis of } N'\}.
  \]
  If $r,s$ are the ranks of $N,N'$, then the rank of $N \wedge N'$
  is greater or equal to $n - r - s$ (where equality is attained
  if and only if there exist bases $B,B'$ of $N,N'$ satisfying
  $B \cup B' = [n]$). Then we get
  \[
    \trop(N) \cdot \trop(N') =
      \begin{cases}
        \trop(N \wedge N'), & \text{if the rank of $N \wedge N'$ 
                                   is $n-r-s$}, \\
        \emptyset,          & \text{otherwise}.
      \end{cases}
  \]
  This follows essentially from extending the arguments in the proof
  of \cite{Speyer}*{proposition 3.1} to the case where the coordinates
  of the Pl\"ucker vector are allowed to be infinite.
\end{remark}

\section{Dividing out the lineality space} \label{dividingout}

So far, we defined an intersection product on $\trop(M)$ which is a ``tropical cone''
in the sense that it contains the lineality space $L = \R\cdot (1,\ldots, 1)$. 
But in most applications, one is really interested in $\trop(M) / L$. We will 
now discuss how the intersection product of $\trop(M)$ descends to $\trop(M)/L$.
First, let us fix some terminology.

Let $\curlyx$ be a polyhedral complex in a vector space $V$. 
For a cell $\tau \in \curlyx$ we denote by $V_\tau$ 
the linear subspace spanned by (differences of vectors in) $\tau$. 
The intersection of all these subspaces $L := \cap_{\tau \in \curlyx} V_\tau$
is called the \emph{lineality space} of $\curlyx$. 
If $\curlyx$ is a fan, $L$ is just the unique inclusion minimal cone of 
$\curlyx$. We define the polyhedral complex $\curlyx/L$ in $V/L$ by 
$\curlyx/L:=\{\qqq(\tau)|\tau \in \curlyx\}$, where $\qqq:V\rightarrow V/L$ is 
the quotient map. If $\curlyx$ is weighted, $\qqq(\sigma)$ inherits the weight 
from $\sigma$. 

Let $X$ be a tropical cycle in $V$. A subspace $L \subseteq V$ is called a
\emph{lineality space} of $X$ if there is a polyhedral structure $\curlyx$ of $X$ 
whose lineality space is $L$. In this case, we denote by $X/L$ the tropical
cycle in $V/L$ represented by $\curlyx /L$.

Let $C$ be a cycle in $X/L$ and let $\curlyc$ be a polyhedral structure of $C$. 
We define the polyhedral complex $\qqq^{-1}(\curlyc)$ in $X$ to be
the collection of cells $\{\qqq^{-1}(\sigma) | \sigma\in\curlyc\}$ (with
weights inherited from $\curlyc$). 
Furthermore, we define $\qqq^{-1}(C)$ to be the tropical 
cycle associated to $\qqq^{-1}(\curlyc)$. By definition, $L$ is a lineality 
space of $\qqq^{-1}(C)$.

Now, the only thing we need in order to define an intersection product
on $\trop(M)/L$ is the following lemma.

\begin{lemma} \label{linspace}
  Let $C,D$ be two cycles in a matroid variety $\trop(M)$ and let us assume
  that $L$ is a lineality space of each. Then $L$ is also a lineality
  space of $C \cdot D$ (if non-zero).
\end{lemma}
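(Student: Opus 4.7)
The plan is to reduce the statement to a translation-invariance argument. A subspace $L\subseteq V$ is a lineality space of a cycle $X$ in $V$ precisely when $X$ is invariant under the translation $t_w:v\mapsto v+w$ for every $w\in L$ (this follows immediately from the definition via polyhedral structures given in the paragraph preceding the lemma). So it suffices to show that $(t_w)_{\ast}(C\cdot D) = C\cdot D$ for every $w\in L$.

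Fix $w\in L$. Since $L$ is a lineality space of $\trop(M)$, the translation $t_w$ is a tropical isomorphism of $\trop(M)$ onto itself, and since $L$ is also a lineality space of $C$ and $D$ individually we have $(t_w)_{\ast}C = C$ and $(t_w)_{\ast}D = D$. Applying lemma \ref{automorphism} with $\alpha = t_w$ then yields
\[(t_w)_{\ast}(C\cdot D) = (t_w)_{\ast}C\cdot (t_w)_{\ast}D = C\cdot D,\]
which is exactly the $t_w$-invariance we need. As $w\in L$ was arbitrary, $L$ is a lineality space of $C\cdot D$ whenever this cycle is non-zero.

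The only mild obstacle is confirming that an arbitrary (not necessarily integer) real translation $t_w$ counts as a tropical isomorphism in the sense required by lemma \ref{automorphism}. This should cause no trouble, since $t_w$ is an affine map preserving both support and weights of $\trop(M)$ and the projection formula underlying lemma \ref{automorphism} applies to it verbatim. If one prefers to avoid this point altogether, a direct alternative is to use remark \ref{independent} to write $C\cdot D = \pi_{\ast}(\diagonal\cdot (C\times D))$ and observe that $C\times D$ has lineality space $L\times L$ while $\diagonal = \trop(\diagM)$ has lineality $\Delta_L := \{(w,w):w\in L\}$; the same $t_{(w,w)}$-argument carried out on $\trop(M\oplus M)$ shows that $\Delta_L$ survives as a lineality space of $\diagonal\cdot (C\times D)$, and projecting via $\pi$ yields the desired lineality $L$ on $C\cdot D$.
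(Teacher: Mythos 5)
Your proof is correct and is essentially the paper's own argument: the paper likewise characterises having lineality space $L$ as invariance under all translations $\alpha_v$, $v\in L$, and then applies lemma \ref{automorphism} to pass this invariance from $C$ and $D$ to $C\cdot D$. Your worry about real translations is unfounded for the same reason you give (the paper's notion of tropical morphism explicitly allows translation by arbitrary real vectors), so the argument goes through verbatim.
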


\begin{proof}
  For all vectors $v \in L$ we can define the translation automorphism
  $\alpha_v : \trop(M) \rightarrow \trop(M)$ which sends $x$ to $x+v$.
  For a subcycle of $\trop(M)$, having $L$ as lineality space is equivalent
  to being invariant under all translations $\alpha_v, v \in L$.
  Now we use lemma \ref{automorphism} to see that this property
  is passed from $C$ and $D$ to $C \cdot D$.
\end{proof}

\begin{definition} \label{defintprodlinspace}
  Let $\trop(M)$ be a matroid variety with lineality space $L$, and let
  $C,D$ be two tropical cycles in $\trop(M) / L$. We define the 
  \emph{intersection product} $C \cdot D$ of $C$ and $D$ in $\trop(M) / L$ by 
  \[
    C \cdot D := (\qqq^{-1}(C) \cdot \qqq^{-1}(D))/L,
  \]
  where on the right hand side we use the previously defined intersection
  product on $\trop(M)$ (cf.\ definition \ref{intersectionproduct}).
  In words, we first take preimages of $C$ and $D$ in $\trop(M)$ and intersect 
  them. By lemma \ref{linspace}, the result has lineality space $L$ which we 
  divide out again. 
\end{definition}

\begin{remark} \label{cart products}
  This definition also works for cartesian
  products $\trop(M)/L \times \trop(M')/L'$ as they are equal to
  $\trop(M \oplus M')/L \times L'$.
\end{remark}

\begin{proposition}
\label{diagonallinspace}
Let $C,D$ be cycles in $ \trop(M)/L$. Then $\Delta_{\trop(M)/L}\cdot (C\times D)=\Delta_{C\cdot D}$. In particular, we have
\[C\cdot D=\pi_{\ast}(\Delta_{\trop(M)/L}\cdot C\times D),\] 
where $\pi:\trop(M)/L\times\trop(M)/L\rightarrow\trop(M)/L$ is the projection to the first factor. Note that this is how we defined our intersection product on matroid varieties (cf.\ remark \ref{independent}).
\end{proposition}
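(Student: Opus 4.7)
The second equation of the proposition follows immediately from the first by applying $\pi_\ast$: writing $\sigma: \trop(M)/L \to (\trop(M)/L)^2$ for the diagonal embedding $\bar x \mapsto (\bar x, \bar x)$, we have $\pi \circ \sigma = \id$, so $\pi_\ast(\Delta_{C \cdot D}) = \pi_\ast \sigma_\ast(C \cdot D) = C \cdot D$. Therefore I focus on proving the first equation, $\Delta_{\trop(M)/L} \cdot (C \times D) = \Delta_{C \cdot D}$.

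Set $\tilde C := \qqq^{-1}(C)$, $\tilde D := \qqq^{-1}(D)$, and $\widetilde{\Delta} := (\qqq \times \qqq)^{-1}(\Delta_{\trop(M)/L})$. Unwinding Definition \ref{defintprodlinspace} (and using that $(\qqq \times \qqq)^{-1}(C \times D) = \tilde C \times \tilde D$), the first equation is equivalent to the identity
\[
\widetilde{\Delta} \cdot (\tilde C \times \tilde D) \;=\; (\qqq \times \qqq)^{-1}(\Delta_{C \cdot D})
\]
of $L \oplus L$-invariant cycles in $\trop(M \oplus M) = \trop(M)^2$. The starting point for this identity is the analogous unthickened identity $\Delta_{\trop(M)} \cdot (\tilde C \times \tilde D) = \Delta_{\tilde C \cdot \tilde D}$, obtained exactly as in Remark \ref{independent} from Lemma \ref{containedindiagonal} together with the fact that the first-factor projection restricts to an isomorphism on $|\Delta_{\trop(M)}|$.

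To promote this to the thickened cycle $\widetilde{\Delta}$, I would apply Lemma \ref{automorphism} with the translation automorphism $\tau_{(v, 0)}$ of $\trop(M \oplus M)$ for $v \in L$. This automorphism fixes $\tilde C \times \tilde D$ (since $\tilde C$, $\tilde D$ are $L$-invariant by construction) and carries $\Delta_{\trop(M)}$ to $\Delta_{\trop(M)} + (v, 0)$, yielding the shifted identity $(\Delta_{\trop(M)} + (v, 0)) \cdot (\tilde C \times \tilde D) = \Delta_{\tilde C \cdot \tilde D} + (v, 0)$ for every $v \in L$. The main obstacle is to package this one-parameter family of shifted identities into the single cycle identity for the $L \oplus L$-invariant thickening $\widetilde{\Delta}$. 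I would handle this via a local reduction: at a generic point $p = (y_0, y_0) \in |\Delta_{\tilde C \cdot \tilde D}|$, Lemma \ref{localmatroidfans} replaces $\trop(M \oplus M)$ with its star $\trop(M_{y_0} \oplus M_{y_0})$, in which $\widetilde{\Delta}$ acquires an explicit description as $\Delta_{\trop(M_{y_0})}$ augmented by the lineality $L \oplus L$, and the intersection with $\tilde C \times \tilde D$ reduces to a tractable fan computation whose output can be matched with $(\qqq \times \qqq)^{-1}(\Delta_{C \cdot D})$ cell by cell. Once the cycle identity is verified locally (and hence globally, both sides being $L \oplus L$-invariant), dividing by $L \oplus L$ yields the proposition.
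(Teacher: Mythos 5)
Your reductions are fine as far as they go: the second equation does follow from the first by pushing forward along $\pi$, and the unthickened identity $\Delta_{\trop(M)}\cdot(\widetilde C\times\widetilde D)=\Delta_{\widetilde C\cdot\widetilde D}$ is indeed available from lemma \ref{containedindiagonal}, property (6) of theorem \ref{properties} and the injectivity of $\pi$ on the diagonal. But the step you yourself flag as ``the main obstacle'' is in fact the entire content of the proposition, and your sketch does not supply it. The translated identities $(\Delta_{\trop(M)}+(v,0))\cdot(\widetilde C\times\widetilde D)=\Delta_{\widetilde C\cdot\widetilde D}+(v,0)$ cannot be ``packaged'' into the identity for $\widetilde{\Delta}:=(\qqq\times\qqq)^{-1}\Delta_{\trop(M)/L}$ by any formal operation: $\widetilde{\Delta}$ is not a sum of such translates but a cycle of strictly larger dimension ($\dim\Delta_{\trop(M)}+\dim L$), so knowing all the codimension-$r$ products tells you nothing directly about the codimension-$(r-\dim L)$ product $\widetilde{\Delta}\cdot(\widetilde C\times\widetilde D)$. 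Your proposed local reduction does not close this gap either: at a point $(y_0,y_0)$ the star of $\widetilde{\Delta}$ is again the preimage of a quotient diagonal inside $\trop(M_{y_0}\oplus M_{y_0})$, so the ``tractable fan computation'' you invoke is exactly the statement to be proved, now for the star fans; no mechanism for actually computing the product with the thickened diagonal is given. (A smaller issue: checking stars only at generic points of $|\Delta_{\widetilde C\cdot\widetilde D}|$ would not rule out extra facets of $\widetilde{\Delta}\cdot(\widetilde C\times\widetilde D)$ supported elsewhere in $|\widetilde{\Delta}|\cap(|\widetilde C|\times|\widetilde D|)$.)

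What is missing is precisely what the paper supplies. First, the cycle identity $(\qqq\times\qqq)^{-1}\Delta_{A/L}=(\id\times\qqq)^{-1}(\id\times\qqq)_{\ast}\Delta_A$ for $L$-invariant $A$, which rewrites $\widetilde{\Delta}$ as the thickening of the honest diagonal $\Delta_{\trop(M)}$ along the single lineality direction $\{0\}\times L$. Second, and crucially, lemma \ref{qminus1}: thickening along a lineality direction commutes with the intersection product against a cycle that is already invariant, i.e.\ $\qqq^{-1}\qqq_{\ast}(C)\cdot D=\qqq^{-1}\qqq_{\ast}(C\cdot D)$. Its proof is not formal: one must choose the functions cutting out the diagonal by pulling back, from the connected components of $M$, functions that are affine linear on a polyhedral structure of $\trop(M)\times\trop(M)$ with lineality space $\Delta_L$ (using lemma \ref{connectedcomponents}), and then use that for such functions $\varphi\cdot\qqq^{-1}\qqq_{\ast}(\,\cdot\,)=\qqq^{-1}\qqq_{\ast}(\varphi\cdot(\,\cdot\,))$. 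Neither this invariance argument nor any substitute for it appears in your proposal, so the proof has a genuine gap at its central point.
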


To prove this we use the following lemmata:

\begin{lemma}
  Let $X$ be a tropical cycle with polyhedral structure $\curlyx$ 
  whose lineality space is $L$. Let $\varphi$ be a function which
  is affine linear on the cells of $\curlyx$ and let $C$ be a cycle
  in $X$ (not necessarily with lineality space $L$). Then the equation
  \[
    \varphi \cdot \qqq^{-1}\qqq_*(C) = \qqq^{-1}\qqq_*(\varphi \cdot C)
  \]
  holds, where $\qqq : X \rightarrow X/L$ is the quotient map.
\end{lemma}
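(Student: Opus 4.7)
I plan to prove the equality cell-by-cell, via a common polyhedral structure and an explicit computation of codimension-one weights.

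First, I would choose a polyhedral structure $\curlyc$ of $C$ that refines $\curlyx$; such a refinement exists because $C \subseteq X$. After further subdivision I would arrange that $\qqq(\curlyc) := \{\qqq(\sigma) : \sigma \in \curlyc\}$ is a polyhedral complex in $X/L$ representing $\qqq_{*}(C)$, and that $\qqq^{-1}\qqq(\curlyc) = \{\sigma+L : \sigma \in \curlyc\}$ is a polyhedral structure for $\qqq^{-1}\qqq_{*}(C)$. The crucial observation is that every cell $\sigma + L$ is contained in the (necessarily $L$-invariant) cell $\tilde\sigma \in \curlyx$ containing $\sigma$, so by hypothesis $\varphi$ is affine linear on every cell of $\qqq^{-1}\qqq_{*}(C)$, and moreover its linear part on $\sigma + L$ coincides with its linear part on $\sigma$ (both being restrictions of the same affine function $\varphi|_{\tilde\sigma}$).

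With this setup, both cycles $\varphi \cdot \qqq^{-1}\qqq_{*}(C)$ and $\qqq^{-1}\qqq_{*}(\varphi \cdot C)$ are supported on cells of the form $\tau + L$ for codimension-one cells $\tau \in \curlyc$. For such a cell, the weight on the left is given by the tropical balancing formula applied to $\qqq^{-1}\qqq_{*}(C)$, summing over adjacent maximal cells $\sigma + L$ contributions built from the weight of $\sigma+L$ and the slope of $\varphi|_{\tilde\sigma}$ along the primitive normal vector. The weight on the right comes from the analogous formula for $\varphi \cdot C$ on $\curlyc$, then transported through $\qqq^{-1}\qqq_{*}$. Since the primitive normals $v_{\sigma/\tau}$ and the primitive normals at $\sigma+L / \tau+L$ project to the same primitive vector in $X/L$, and since the slopes of $\varphi$ on $\sigma$ and on $\sigma+L$ are equal, the two computations are assembled from identical ingredients and agree term by term.

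The main obstacle is the bookkeeping of lattice indices when $\qqq$ fails to be generically injective on the maximal cells of $\curlyc$, i.e.\ when several cells $\sigma_1,\ldots,\sigma_r \in \curlyc$ share the same $L$-saturation $\sigma+L$. In that case, the weight of $\sigma+L$ in $\qqq^{-1}\qqq_{*}(C)$ is a lattice-weighted sum over the $\sigma_i$, and the push-forward $\qqq_{*}$ together with the pull-back $\qqq^{-1}$ introduce lattice-index factors that must cancel on both sides of the equation. The identity $\qqq^{-1}\qqq(\sigma) = \sigma + L$ together with the natural isomorphism $\Lambda_{\sigma+L}/\Lambda_L \cong \qqq(\Lambda_{\sigma+L})$ makes this cancellation transparent, and by first reducing to the generically transverse case through a further subdivision one can sidestep most of the combinatorial bookkeeping.
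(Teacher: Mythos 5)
Your plan of comparing weights cell by cell can in principle be carried out, but as written it has a genuine gap exactly where you claim the two computations ``agree term by term''. They do not. First, a maximal cell $\sigma$ of $\curlyc$ with $V_\sigma\cap L\neq 0$ (so that $\qqq(\sigma)$ drops dimension) is discarded when forming $\qqq_*(C)$ and hence produces no maximal cell of $\qqq^{-1}\qqq_*(C)$; nevertheless it contributes the term $\omega(\sigma)\,\varphi_\sigma(v_{\sigma/\tau})$ to the weight of an adjacent non-collapsing codimension-one cell $\tau$ of $\varphi\cdot C$, and this term survives on the right-hand side. Second, even for non-collapsing $\sigma$ the normal vectors you compare are only well defined up to elements of $L$, and since $\varphi$ is merely assumed affine on the cells of $\curlyx$, its linear part need not vanish on $L$; hence $\varphi_\sigma(v_{\sigma/\tau})$ and $\varphi_{\sigma+L}(v_{(\sigma+L)/(\tau+L)})$ can differ. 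A concrete instance: $X=\R^2$, $L=\R e_2$, $\varphi=\max\{x,0\}+y$, and $C$ the standard tropical line with rays $(-1,0)$, $(0,-1)$, $(1,1)$. At the origin the right-hand computation has contributions $0$, $-1$, $2$, while the left-hand computation along the line $\{x=0\}$ has contributions $0$, $1$; the totals agree (both equal $1$) only after the collapsing ray and the differing correction terms $-\varphi_\tau(\sum\omega\, v)$ are played off against each other, which is precisely the bookkeeping your argument skips. Moreover, your proposed escape --- ``reducing to the generically transverse case through a further subdivision'' --- cannot work: whether $\qqq$ collapses a cell depends only on $V_\sigma\cap L$, and no subdivision changes that.

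The paper's proof avoids all of this with a normalisation step that your proposal is missing: since the cells of $\curlyx$ are $L$-invariant and adjacent cells share $L$-invariant faces, the restrictions to $L$ of the linear parts of $\varphi$ agree on all cells, so after subtracting a globally affine linear function (which changes no divisor) one may assume $\varphi=\qqq^*\widetilde{\varphi}$ for a function $\widetilde{\varphi}$ on $X/L$; then both sides of the asserted identity equal $\qqq^{-1}(\widetilde{\varphi}\cdot\qqq_*(C))$, by the definitions of $\qqq^{-1}$ and $\qqq_*$ and the projection formula. If you want to keep your direct computation, you must first perform this normalisation, and even then the contributions of collapsing cells have to be merged with the correction terms rather than matched one-to-one --- at which point you are essentially re-proving the projection formula instead of sidestepping it.
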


\begin{proof}
  First note that by adding a globally affine linear function to $\varphi$, we can
  assume $\varphi = \qqq^*\widetilde{\varphi}$ for a suitable function 
  $\widetilde{\varphi}$ on $X/L$. In this case, it is obvious from the definitions
  and projection formula that both sides equal 
  $\qqq^{-1}(\widetilde{\varphi} \cdot \qqq_*(C))$.
\end{proof}

\begin{lemma}
\label{qminus1}
Let $L$ be a lineality space of a matroid variety $\trop(M)$, and $\qqq:\trop(M)\rightarrow\trop(M)/L$ the corresponding quotient map. Let $C,D$ be cycles in $\trop(M)$ such that 
$L$ is a lineality space of $D$. Then $\qqq^{-1}\qqq_*(C)\cdot D=\qqq^{-1}\qqq_*(C\cdot D)$.
\end{lemma}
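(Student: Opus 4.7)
The plan is to pass the operator $\qqq^{-1}\qqq_*$ through the formula $C\cdot D=\pi_*(\varphi_r\cdots\varphi_1\cdot(C\times D))$ defining the intersection product. Let $\Delta_L:=\{(l,l):l\in L\}\subseteq\trop(M\oplus M)$; this subspace lies inside the diagonal $\diagonal$ and is the lineality space of a (suitably coarsened) polyhedral structure on $\trop(M\oplus M)$ on whose cells all the diagonal-cutting functions $\varphi_1,\ldots,\varphi_r$ from Corollary \ref{diagonal} are affine linear. Let $\tilde\qqq$ denote the corresponding quotient map.

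First I would verify the product identity
\[
\qqq^{-1}\qqq_*(C)\times D=\tilde\qqq^{-1}\tilde\qqq_*(C\times D),
\]
which is precisely where the hypothesis that $L$ is a lineality space of $D$ enters. The left-hand side is $C\times D$ saturated in the first factor by $L$-translates, while the right-hand side is the $\Delta_L$-saturation $\{(c+l,d+l):c\in C,\,d\in D,\,l\in L\}$; the two agree because for $D$ with $L$-lineality one has $d+l\in D$, decoupling the simultaneous translations into a translation only in the first factor. Second, I would apply the previous lemma iteratively to the functions $\varphi_1,\ldots,\varphi_r$ with the quotient $\tilde\qqq$, yielding
\[
\varphi_r\cdots\varphi_1\cdot\tilde\qqq^{-1}\tilde\qqq_*(C\times D)=\tilde\qqq^{-1}\tilde\qqq_*(\varphi_r\cdots\varphi_1\cdot(C\times D)).
\]

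Third, I would push forward along $\pi$. Setting $Z:=\varphi_r\cdots\varphi_1\cdot(C\times D)$, Lemma \ref{containedindiagonal} gives $|Z|\subseteq\diagonal$, and since $\Delta_L\subseteq\diagonal$ the $\Delta_L$-saturation of $Z$ also lies in $\diagonal$. On $\diagonal$ the projection $\pi$ is an isomorphism, and it sends $\Delta_L$ onto $L$, so
\[
\pi_*\bigl(\tilde\qqq^{-1}\tilde\qqq_*(Z)\bigr)=\qqq^{-1}\qqq_*(\pi_*(Z))=\qqq^{-1}\qqq_*(C\cdot D).
\]
Combining the three steps with the definition of the intersection product concludes the argument.

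The main obstacle is identifying the correct quotient $\tilde\qqq$: neither the naive $\qqq\times\id$ nor $\qqq\times\qqq$ is directly available, since the $\varphi_i$ need not be linear on cells of any polyhedral structure with lineality $L\times 0$ or $L\times L$, forcing one to use the smaller subspace $\Delta_L$. It is precisely for this reason that the $L$-invariance of $D$ in the first step is indispensable: without it, the $\Delta_L$-saturation and the required saturation $\qqq^{-1}\qqq_*(C)\times D$ genuinely disagree, and the whole chain of identities breaks down.
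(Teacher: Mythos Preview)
Your approach is the paper's: pass to the quotient by $\Delta_L$, identify $\qqq^{-1}\qqq_*(C)\times D$ with the $\Delta_L$-saturation of $C\times D$, commute the diagonal-cutting functions past the saturation operator via the previous lemma, and project. Even your identification of the ``main obstacle'' matches the paper.

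There is, however, a genuine gap. You assert that the specific functions $\varphi_1,\ldots,\varphi_r$ of Corollary~\ref{diagonal} are affine linear on some polyhedral structure of $\trop(M\oplus M)$ with lineality space $\Delta_L$. This fails whenever $L$ strictly contains $\R\cdot(1,\ldots,1)$, i.e.\ whenever $M$ is disconnected. Concretely, take $M=U_{1,1}\oplus U_{1,1}$, so that $\trop(M)=\R^2$ and $L=\R^2$; then $\Delta_L=\{(a,b,a,b)\}\subseteq\R^4$, and the formula in Corollary~\ref{diagonal} gives $\varphi_1(1,0,1,0)=\varphi_1(0,1,0,1)=0$ while $\varphi_1(1,1,1,1)=1$. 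Hence $\varphi_1$ is not linear on $\Delta_L$, and no coarsening of $\mf(M\oplus M)$ rescues this.

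The paper resolves this by replacing the functions. One decomposes $M=\bigoplus_i M_i$ into connected components and pulls back to $\trop(M)\times\trop(M)$ the diagonal-cutting functions of each factor $\trop(M_i)\times\trop(M_i)$. By Lemma~\ref{connectedcomponents} each $M_i$ has one-dimensional lineality, so these pulled-back functions are affine linear on the product of fine subdivisions $\prod_i\mf(M_i\oplus M_i)$, whose lineality space contains $\Delta_L$. Since the intersection product is independent of the choice of functions cutting out the diagonal (Remark~\ref{independent}), one may use these instead, after which your three steps go through verbatim.
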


\begin{proof}
First, we split $M$ into its connected components $M=\bigoplus_{i}M_i$
and pull back the functions that cut out the diagonal of 
$\trop(M_i) \times \trop(M_i)$ to $\trop(M) \times \trop(M)$. 
With the help of lemma \ref{connectedcomponents} this gives us functions
on $\trop(M) \times \trop(M)$ which cut out the diagonal and are affine linear
on a polyhedral structure of $\trop(M) \times \trop(M)$ with lineality space
$\Delta_L$.

Second, set
$\jj: \trop(M) \times \trop(M) \rightarrow (\trop(M) \times \trop(M))/\Delta_L$.
Then we have $\qqq^{-1}\qqq_*(C) \times D = \jj^{-1}\jj_*(C \times D)$.
Thus we are in the situation of the previous lemma, and intersecting with the diagonal
gives $\jj^{-1}\jj_*(\diagonal \cdot C \times D)$. After projecting, this
is $\qqq^{-1}\qqq_*(C\cdot D)$ and we are done.
\end{proof}

\begin{proof}[Proof of proposition \ref{diagonallinspace}]
Let $\qqq:\trop(M)\rightarrow\trop(M)/L$ be the quotient map. For any cycle $A$ having lineality space $L$ the following equality holds:
\begin{equation}\label{E}
(\qqq\times\qqq)^{-1}\Delta_{A/L}=(\id\times\qqq)^{-1}(\id\times \qqq)_{\ast}\Delta_A.
\end{equation}
The set-theoretic equality is clear; the equality of the cycles follows from the fact that all involved weights are inherited by the weights of $A$. 
By definition of our intersection products and equation \eqref{E} we have
\begin{eqnarray*}
\Delta_{\trop(M)/L}\cdot (C\times D) &=& ((\qqq\times\qqq)^{-1}\Delta_{\trop(M)/L}\cdot \qqq^{-1}C\times \qqq^{-1}D)/L\times L \\
&=&  (((\id\times\qqq)^{-1}(\id\times\qqq)_{\ast}\Delta_{\trop(M)})\cdot \qqq^{-1}C\times \qqq^{-1}D)/L\times L,
\end{eqnarray*}
as well as
\begin{eqnarray*}
\Delta_{C\cdot D} &=& ((\qqq\times\qqq)^{-1}(\Delta_{(\qqq^{-1}C\cdot\qqq^{-1}D)/L}))/L\times L \\
&=& ((\id\times\qqq)^{-1}(\id\times\qqq)_{\ast}(\Delta_{\qqq^{-1}C\cdot\qqq^{-1}D}))/L\times L \\
&=& ((\id\times\qqq)^{-1}(\id\times\qqq)_{\ast}(\Delta_{\trop(M)}\cdot \qqq^{-1}C\times \qqq^{-1}D))/L\times L. 
\end{eqnarray*}
Now the claim follows from lemma \ref{qminus1}.
\end{proof}

\begin{remark}\label{abc}
  Let $\trop(M)/L$ be a quotient of a matroid variety and assume we can cut out
  the diagonal $\Delta_{\trop(M)/L}$ in $\trop(M)/L \times \trop(M)/L$ with a 
  collection of rational functions. Then the intersection product defined by this
  collection coincides with the one defined in \ref{defintprodlinspace}.
  This follows from proposition \ref{diagonallinspace} together with 
  property (6) of theorem \ref{properties}.
\end{remark}

\begin{remark} \label{automorphism linspace}
  Lemma \ref{automorphism} also holds if we replace 
  $\trop(M)$ by $\trop(M)/L$, i.e.\ we have 
  \[
    \alpha_*(C \cdot D) = \alpha_*(C) \cdot \alpha_*(D)
  \]
  for any isomorphism of $\alpha : \trop(M)/L \rightarrow \trop(M')/L'$. 
  We first use remark \ref{cart products} and write 
  $\trop(M)/L = (\trop(M) \times L')/(L \times L')$ resp.\ 
  $\trop(M')/L' = (L \times \trop(M'))/(L \times L')$. 
  In other words, we can assume that $\trop(M)$ and $\trop(M')$ lie in the
  same ambient vector space and that $L=L'$. 
  In this situation we can
  lift $\alpha$ to an isomorphism 
  $\widetilde{\alpha} : \trop(M) \rightarrow \trop(M')$
  with $q \circ \widetilde{\alpha} = \alpha \circ q$ and use
  lemma \ref{automorphism}. 
\end{remark}

\section{Smooth varieties and locality}

For the sake of completeness, in the following we give a (preliminary) definition
of smooth tropical varieties (whose local models
are matroid varieties modulo lineality spaces) 
and extend the intersection product to those.
For the latter we have to show that the intersection product can be computed ``locally''.

\begin{definition}
  A \emph{smooth tropical variety} is a  
  topological space $X$ together with an 
    open cover $\{U_i\}$ and homeomorphisms
  \[
    \phi_i : U_i \rightarrow W_i \subseteq |\trop(M)/L| \subseteq \R^{n}/L
  \]
  such that
  \begin{itemize} 
  \item
    each $W_i$ is an (euclidean) open subset of $|\trop(M)/L|$ for a suitable
    matroid $M$ with (suitable) lineality space $L$; 
  \item
    for each pair $i,j$, the transition map
    \[
      \phi_j \circ \phi_i^{-1} :
        \phi_i(U_i \cap U_j) \rightarrow \phi_j(U_i \cap U_j)
    \]
    is the restriction of an affine $\Z$-linear map $\Phi_{i,j}$, i.e.\
    the composition of a translation by a real vector and a $\Z$-linear map.
  \end{itemize}
\end{definition}

Let us stress again that this is only a provisional definition which is appropriate
for the purposes of this paper. 
In particular, our definition does not allow any boundary points (i.e.\ 
points of ``positive sedentarity''). However, we chose not to 
reflect this in a more complicated name. 

Note also that a tropical cycle $X$
in $\R^n$ is a smooth variety if and only if for all points $p$ in $X$
the star $\Star_X(p)$ is isomorphic to $\trop(M)/L$, the quotient of
a suitable matroid variety.

Let $W \subseteq |\trop(M)/L| \subseteq \R^{n}/L$ be a set as in the previous
definition. We can define polyhedral complexes and tropical cycles in $W$
exactly as in $\trop(M)/L$ --- by just defining a polyhedron in $W$ to be
the (non-empty) 
intersection of a polyhedron in $|\trop(M)/L| \subseteq \R^{n}/L$ with $W$.
A set $C \subseteq W$ (or, more generally, a topological space)
is called a \emph{weighted set} if each point from a dense open subset of $C$ is equipped with a non-zero integer weight which is locally constant
(in the dense open subset). Two such weighted sets $C,C'$ are said to \emph{agree} and
are thus identified if the sets are equal and the weight functions agree (where
both defined). Note that each tropical cycle $D$ in $W$ can be regarded
as a weighted set
by inheriting the weight of each facet to its interior points.

\begin{definition}
  Let $X$ be a smooth tropical variety. A \emph{tropical subcycle} of $X$ 
  is defined to be
  a weighted set $C$ such that for all $i$ the induced weighted set
  $\phi_i(C \cap U_i)$ agrees with a tropical cycle in $W_i$.
\end{definition}

Of course, each smooth variety $X$ contains the \emph{fundamental cycle} $X$ itself
with constant weight $1$ for all points.

If $C,D$ 
are two tropical cycles in $X$, we want to define their 
intersection product in two steps: We first intersect $C$ and $D$
locally on each $U_i$ (via $\phi_i$) and then glue together the
local results. To make this approach work, it remains to be checked that
intersection products on $\trop(M)/L$ can
be computed locally, as in the $\R^r$-case (cf.\ 
\cite{disshannes}*{proposition 1.5.8}).

Let $X$ be a tropical cycle in a vector space $V$ and let $p$ be a point
in $X$. Recall that we defined $\Star_X(p)$ to be 
the fan cycle containing all vectors $v$ such that
$p + \epsilon v \in |X|$ for sufficiently small $\epsilon$.
Let $\varphi$ be a rational function on $X$. Then $\varphi$ induces
a function $\varphi^p$ on $\Star_X(p)$. Namely, we first restrict
$\varphi$ to a small neighbourhood of $p$ and then extend it by linearity 
to $\Star_X(p)$
(one might also normalise to $\varphi^p(0) = 0$). From the locality of
$\varphi \cdot X$ it follows that
\[
  \Star_{\varphi \cdot X}(p) = \varphi^p \cdot \Star_X(p)
\]
(cf.\ \cite[proposition 1.2.12]{disshannes}). 

In particular, if the functions $\varphi_1, \ldots, \varphi_r$ cut out
the diagonal of $X$ in $X \times X$, then the functions
$\varphi_1^{(p,p)}, \ldots, \varphi_r^{(p,p)}$ cut out the diagonal of
$\Star_X(p)$ in $\Star_{X \times X}(p,p) = \Star_X(p) \times \Star_X(p)$.
Using these collections of functions to define intersection products on
$X$ resp.\ $\Star_X(p)$, we obviously get the equality
\[
  \Star_{C \cdot D}(p) = \Star_C(p) \cdot \Star_D(p),
\]
where the left hand side (resp.\ right hand side) contains a product on
X (resp.\ $\Star_X(p)$).

Note again that the intersection product on $\trop(M)$ is independent of
the chosen functions. Moreover, for each cycle $C$ in $\trop(M)/L$ and 
$p \in |\qqq^{-1}(C)|$ with $\qqq(p) = p'$ 
we have $\qqq^{-1}\Star_C(p') = \Star_{\qqq^{-1}C}(p)$.
This leads to the following statement.

\begin{corollary}
Let $C,D$ be subcycles of $\trop(M)/L$ and $p$ a point of $\trop(M)/L$. 
Then the following equality holds: 
\[
  \Star_{C \cdot D}(p) = \Star_C(p) \cdot \Star_D(p)
\]  
\end{corollary}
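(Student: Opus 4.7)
The plan is to reduce the corollary to the analogous statement already established in the paragraphs immediately preceding it for intersection products on $\trop(M)$ itself, using the $\qqq^{-1}$ construction that defines the intersection product on $\trop(M)/L$.

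Concretely, set $\widetilde{C}:=\qqq^{-1}(C)$ and $\widetilde{D}:=\qqq^{-1}(D)$, which are cycles in $\trop(M)$ with lineality space $L$, and pick any lift $\widetilde{p}\in\qqq^{-1}(p)$. By definition \ref{defintprodlinspace} we have $C\cdot D=(\widetilde{C}\cdot\widetilde{D})/L$, and by lemma \ref{linspace} the cycle $\widetilde{C}\cdot\widetilde{D}$ also has $L$ as lineality space. The central compatibility I need is
\[
\qqq^{-1}\Star_E(p)=\Star_{\qqq^{-1}E}(\widetilde{p})
\]
for any cycle $E$ in $\trop(M)/L$; this is the observation recorded just before the corollary and is a direct consequence of choosing a polyhedral structure of $E$ containing $\{p\}$ as a cell and pulling it back via $\qqq$, with weights inherited.

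Applying this identity with $E=C\cdot D$ yields $\qqq^{-1}\Star_{C\cdot D}(p)=\Star_{\widetilde{C}\cdot\widetilde{D}}(\widetilde{p})$. Now invoke the corresponding star-compatibility on the matroid variety $\trop(M)$, which was established in the paragraph above the corollary using that $\Star_{\trop(M)\times\trop(M)}(\widetilde{p},\widetilde{p})=\trop(M_{\widetilde{p}})\times\trop(M_{\widetilde{p}})$ and that the restricted functions $\varphi_i^{(\widetilde{p},\widetilde{p})}$ cut out its diagonal: this gives $\Star_{\widetilde{C}\cdot\widetilde{D}}(\widetilde{p})=\Star_{\widetilde{C}}(\widetilde{p})\cdot\Star_{\widetilde{D}}(\widetilde{p})$. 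Applying the compatibility identity again to $E=C$ and $E=D$ rewrites the right hand side as $\qqq^{-1}\Star_C(p)\cdot\qqq^{-1}\Star_D(p)$. Dividing both sides by $L$ and recognising, via definition \ref{defintprodlinspace} applied to the local matroid quotient $\Star_{\trop(M)/L}(p)=\trop(M_{\widetilde{p}})/L$, that the right hand side is precisely $\Star_C(p)\cdot\Star_D(p)$, we obtain the claim.

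The main obstacle is the bookkeeping around the compatibility identity $\qqq^{-1}\Star_E(p)=\Star_{\qqq^{-1}E}(\widetilde{p})$ and the independence-of-presentation for the local intersection product on $\trop(M_{\widetilde{p}})/L$. The first is a routine check on polyhedral structures once one observes that the lineality of $\qqq^{-1}E$ is automatically $L$; the second is covered by remark \ref{abc}, which ensures that any rational functions cutting out the local diagonal produce the same product. Boundary cases where $p\notin|C|\cap|D|$ force both sides to vanish by property (1) of theorem \ref{properties}.
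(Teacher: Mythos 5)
Your proposal is correct and follows essentially the same route as the paper: it combines the locality statement $\Star_{\varphi\cdot X}(p)=\varphi^{p}\cdot\Star_X(p)$ (so the restricted functions $\varphi_i^{(\widetilde p,\widetilde p)}$ cut out the diagonal of $\Star_{\trop(M)}(\widetilde p)\times\Star_{\trop(M)}(\widetilde p)$), the compatibility $\qqq^{-1}\Star_C(p')=\Star_{\qqq^{-1}C}(\widetilde p)$ noted just before the corollary, and the independence of the product from the chosen cutting functions (remark \ref{abc} / theorem \ref{properties}(6)). The paper leaves these steps implicit ("this leads to the following statement"), and your write-up simply makes the same bookkeeping explicit.
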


Let us now put things together. First, we have a well-defined 
intersection product on open sets 
$W \subseteq |\trop(M)/L| \subseteq \R^{n}/L$.
Namely, for two cycles $C,D$ in $W$, $C \cdot D$ is the subcycle
of $W$ which satisfies 
$\Star_{C \cdot D}(p) = \Star_C(p) \cdot \Star_D(p)$, where
the latter part of the equation is an honest intersection 
product of the two subcycles $\Star_C(p), \Star_D(p)$ on
$\trop(M_p)/L$. 
This does not depend on the choice of
$M,L$ by remark \ref{automorphism linspace}.

When $C,D$ are two subcycles of a smooth tropical variety $X$,
then on each $U_i$ we can compute 
$E_i := (C \cap U_i) \cdot (D \cap U_i)$.
Using locality again, we see that on each overlap $U_i \cap U_j$
the weighted sets $E_i$ and $E_j$ agree. More precisely, this
follows from the fact that for each point $p \in U_i \cap U_j$,
the maps $\Phi_{i,j}$ resp.\ $\Phi_{j,i}$ provide isomorphisms
between the stars of $W_i, \phi_i(C), \phi_i(D)$ at $\phi_i(p)$
on the one hand and the stars of 
$W_j, \phi_j(C), \phi_j(D)$ at $\phi_j(p)$ on the other hand.
Therefore, also the stars of the local intersections 
$\phi_i(E_i)$ and $\phi_j(E_j)$ at $p$ are isomorphic
(cf.\ remark \ref{automorphism linspace}), which 
proves that $E_i$ and $E_j$ agree locally.
We collect all this in the following theorem.

\begin{defthm} \label{properties1}
  Let $X$ be a smooth tropical variety and let $C$ and $D$ be subcycles of $X$.
  Then the \emph{intersection product of $C$ and $D$ on $X$}, denoted by $C \cdot D$,
  is the unique subcycle of $X$ such that
  \[
    (C\cdot D) \cap U_i = (C \cap U_i) \cdot (D \cap U_i)
  \]
  holds for any $U_i$ of the open cover. Moreover, this intersection product satisfies
  the following properties.
\begin{enumerate}
\item $\codim(C \cdot D) = \codim C + \codim D$ (if $C\cdot D \neq 0$).
\item $\betrag{C\cdot D}\subseteq \betrag{C}\cap\betrag{D}$.
\item $(\varphi\cdot C)\cdot D=\varphi\cdot (C\cdot D)$ for any Cartier divisor $\varphi$ on $C$.
\item $C\cdot X=C$.
\item $C\cdot D=D\cdot C$.
\item If $C=\Psi_1\cdots\Psi_s\cdot X$, then $C\cdot D=\Psi_1\cdots\Psi_s\cdot D$.
\item $(C\cdot D)\cdot E=C\cdot (D\cdot E)$.
\item $(C+D)\cdot E=C\cdot E+D\cdot E$.
\item $(A_1\times A_2)\cdot (B_1\times B_2)=(A_1\cdot B_1)\times (A_2\cdot B_2)$ 
if $A_1,B_1$ and $A_2, B_2$ are subcycles of the two smooth varieties $X_1$ and $X_2$ respectively.
\end{enumerate}
\end{defthm}

\begin{proof}
  We already discussed that our definition is well-defined.
  It remains to show the list of properties. 
  In the case $X = \trop(M)$, all the properties have already been proven.
  The next step is $X = \trop(M)/L$, to which the properties immediately
  generalise. For the general case, note that all properties can be 
  verified locally. Therefore, by our local definition of the general
  intersection product, all properties also hold in the general case.
\end{proof}

\begin{remark}
  Let $X,Y$ be two smooth varieties. A tropical morphism $f : X \rightarrow Y$
  is a continuous map such that for all $i,j$ the map 
  $\phi_j^Y \circ f \circ (\phi_i^X)^{-1}$ on the charts is induced by an affine $\Z$-linear map
  of the ambient vector spaces. We call $f$ an isomorphism if there is an 
  inverse tropical morphism $g : Y \rightarrow X$.
  We can extend remark \ref{automorphism linspace} to this case, i.e.\
  if $f$ is an isomorphism and $C$ and $D$ are two subcycles of $X$, then 
  \[
    f_*(C \cdot D) = f_*(C) \cdot f_*(D).
  \]
  Moreover, we can extend proposition \ref{diagonallinspace} and
  check locally that 
  \[
    C \cdot D = \pi_*(\Delta_X \cdot C \times D)
  \]
  holds for all smooth varieties $X$.
\end{remark}

\section{Examples}

In this section we discuss a few examples. The first example compares our 
new definitions to the previously known cases of $\R^r$ and $L^n_k$.
The following examples are devoted to the moduli spaces of tropical rational curves.

\begin{example}
\label{lnk}
Let $M=U_{k+1,n+1}$ be the uniform matroid of rank $k+1$ on the set $N:=\{1,\ldots,n+1\}$ (i.e.\
each $k+1$-subset of $N$ is a basis). 
Then $L:=\R\cdot (1,\ldots ,1)$ is the lineality space of $\trop(M)$, and $\trop(M)/L$ is isomorphic to $L^n_{k}=\max\{x_1,\ldots,x_n,0\}^{n-k}\cdot\R^n$. Thus we have reproved the result of \cite{lars} that the cycles $L^n_{k}$ admit an intersection product of cycles. Note that both intersection products agree by remark \ref{abc}.
\end{example}

\begin{example}
\label{mn}
The complete (undirected) graph $K_{n-1}$ with $n-1$ vertices defines a matroid on the set of edges $\left\{1,\ldots,\binom{n-1}{2}\right\}$ whose independent sets are the trees in $K_{n-1}$. 
It was shown in \cite[chapter 4]{ardila} that $\trop(K_{n-1})$ parameterises 
so-called \emph{equidistant $(n-1)$-trees} (i.e.\ rooted trees with $n-1$ 
labelled leaves and lengths
on each edge such that the distance from the root to any leaf is the same).
As a variation of this, we construct a bijection of  $\trop(K_{n-1})/L$ (with $L:=\R\cdot (1,\ldots ,1)$) and $\mn$, the space of $n$-marked abstract rational tropical curves
(i.e.\ metric trees with bounded internal edges and $n$ unbounded labelled leaves; 
see \cite[chapter 3]{GKM} for the construction of $\mn$).
Our bijection is analogous to the one in \cite{ardila} except for a global scalar factor.
More important, we show that this map is actually a tropical isomorphism of the two
fans, i.e.\ it is induced by a $\Z$-linear transformation of the ambient spaces.
Hence $\mn$ can also be equipped with an intersection product of cycles. 

Note that $\trop(K_{n-1})/L$ lives in $\R^{\binom{n-1}{2}}/L$, whereas
$\mn$ lives in $\R^{\binom{n}{2}}/\Ima(\phi_n)$. 
Here, $\phi_n:\R^n\rightarrow \R^{\binom{n}{2}}$ is the linear map defined by $(a_1,\ldots,a_n)\mapsto ((a_i+a_j))_{i,j}$. We define the linear map $f$ by 
\begin{eqnarray*}
f:  \R^{\binom{n-1}{2}}/L &\rightarrow & \R^{\binom{n}{2}}/\Ima(\phi_n) \\  (a_{i,j})_{i,j} &\mapsto & (b_{i,j})_{i,j} \ , \ \ \ \text{ with } b_{i,j}=\begin{cases} 0, & \text{ if } n\in\{i,j\} \\ 2\cdot a_{i,j}, & \text{ else}\end{cases}.
\end{eqnarray*} 
It is easy to see that $f$ is well-defined and injective. Since its domain and target space have the same dimension, it follows that $f$ is a linear transformation. 

Let $F$ be a flat of the matroid corresponding to $K_{n-1}$. 
Then $F$ is a vertex-disjoint union of complete subgraphs $S_1,\ldots,S_p$ of $K_{n-1}$, and \[f(V_F)=(b_{i,j})_{i,j},  \text{ with } b_{i,j}=\begin{cases}-2, & \text{ if } \{i,j\}\subseteq V(S_t) \text{ for some } t \\ 0, & \text{ else }\end{cases},\] where the $V(S_t)$ denote the sets of vertices of the complete subgraphs $S_t$. 
We define a vector $a \in \R^n$ by setting $a_i=1$ if $i\in V(S_t)$ for some $t$, and $a_i=0$ otherwise. Then   
\[(f(V_F)+\phi_n(a))_{i,j}=\begin{cases}0, & \text{ if } \{i,j\}\subseteq V(S_t) \text{ for some } t, \text{ or } i,j\not\in V(S_t) \text{ for all } t \\ 1, & \text{ if } i\in V(S_t) \text{ for some } t, \text{ and } j\not\in V(S_s) \text{ for all } s\\ 2, & \text{ if there are $s\not = t$ with } i\in V(S_s),\  j\in V(S_t)  \end{cases}.\] 
The metric graph with $n$ leaves associated to this vector, denoted by $M_F$, 
is depicted in the following picture.
\begin{center}
\begin{picture}(0,0)%
\includegraphics{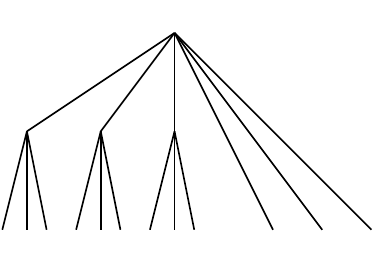}%
\end{picture}%
\setlength{\unitlength}{2072sp}%
\begingroup\makeatletter\ifx\SetFigFont\undefined%
\gdef\SetFigFont#1#2#3#4#5{%
  \reset@font\fontsize{#1}{#2pt}%
  \fontfamily{#3}\fontseries{#4}\fontshape{#5}%
  \selectfont}%
\fi\endgroup%
\begin{picture}(3419,2425)(654,-1754)
\put(2746,-1681){\makebox(0,0)[lb]{\smash{{\SetFigFont{8}{7.2}{\rmdefault}{\mddefault}{\updefault}{\color[rgb]{0,0,0}$\{1,\ldots,n\}\setminus \cup_{i} S_i$}%
}}}}
\put(756,-1681){\makebox(0,0)[lb]{\smash{{\SetFigFont{8}{7.2}{\rmdefault}{\mddefault}{\updefault}{\color[rgb]{0,0,0}$S_1$}%
}}}}
\put(1431,-1681){\makebox(0,0)[lb]{\smash{{\SetFigFont{8}{7.2}{\rmdefault}{\mddefault}{\updefault}{\color[rgb]{0,0,0}$S_2$}%
}}}}
\put(2151,-1681){\makebox(0,0)[lb]{\smash{{\SetFigFont{8}{7.2}{\rmdefault}{\mddefault}{\updefault}{\color[rgb]{0,0,0}$S_p$}%
}}}}
\put(1830,-1681){\makebox(0,0)[lb]{\smash{{\SetFigFont{8}{7.2}{\rmdefault}{\mddefault}{\updefault}{\color[rgb]{0,0,0}...}%
}}}}
\put(1200,-200){\makebox(0,0)[lb]{\smash{{\SetFigFont{8}{7.2}{\rmdefault}{\mddefault}{\updefault}{\color[rgb]{0,0,0}$1$}%
}}}}
\put(1650,-200){\makebox(0,0)[lb]{\smash{{\SetFigFont{8}{7.2}{\rmdefault}{\mddefault}{\updefault}{\color[rgb]{0,0,0}$1$}%
}}}}
\put(2100,-200){\makebox(0,0)[lb]{\smash{{\SetFigFont{8}{7.2}{\rmdefault}{\mddefault}{\updefault}{\color[rgb]{0,0,0}$1$}%
}}}}
\end{picture}%
\\
\end{center}
We see from this description that $f$ also gives an isomorphism of the underlying
lattices $\Z^{\binom{n-1}{2}}/L \rightarrow \Lambda_n$. 
Here $\Lambda_n$ is the lattice generated 
by metric graphs with only one internal edge of length $1$ (see \cite[construction 3.6]{GKM}). 
$\Z^{\binom{n-1}{2}}/L$ is mapped to $\Lambda_n$ as $\Z^{\binom{n-1}{2}}/L$ is spanned by 
the $V_F$ and $M_F$ is contained in $\Lambda_n$.
Moreover, let $M_{I|J}$ be a generator of $\Lambda_n$ corresponding to the graph whose single internal edge splits the leaves into the partition $I\dcup J=\{1,\ldots,n\}$ with $n\in J$. 
Then we have $M_{I|J} = M_F$, where $F$ is the flat associated to the complete subgraph with vertex set $I$; hence $M_{I|J}$ lies in the image of $\Z^{\binom{n-1}{2}}/L$.

It remains to check that $f$ can be restricted to a bijection
$\trop(K_{n-1})/L \rightarrow \mn$. Sticking to the notation for the generators of$\Lambda_n$, 
we already saw
that $M_F = M_{S_1|S_1^c} + \ldots + M_{S_p|S_p^c}$.
For a chain of flats $\F$, it follows that all the appearing partitions $I\dcup J=\{1,\ldots,n\}$ satisfy the following property: 
For each pair of partitions one part of the first partition is contained in one of the
parts of the second partition. This is what is needed to ensure that each positive
sum of such vectors $M_{I|J}$ still corresponds to a metric graph. Therefore,
the image of $\trop(K_{n-1})/L$ is contained in $\mn$. As $\mn$ is irreducible,
we actually have equality.

Therefore, $f$ induces a tropical isomorphism between $\trop(K_{n-1})/L$ and $\mn$
and thus $\mn$ inherits the intersection product of cycles from $\trop(K_{n-1})/L$.
Note that this intersection product on $\mn$ is independent of the chosen isomorphism by
remark \ref{automorphism linspace}.
\end{example}

\begin{example}
The moduli space $\mnlab(\Delta, \R^r)$ parameterises $n$-marked rational parameterised tropical curves of degree $\Delta$ in $\R^r$ (cf.\ \cite[definition 4.1]{GKM}). It was shown in \cite[proposition 4.7]{GKM} that $\mnlab(\Delta, \R^r)$ can be identified with $\mndelta$.
This identification together with the previous examples shows that we have an isomorphism 
\[
  \mnlab(\Delta, \R^r) \cong 
    \trop(K_{n+\betrag{\Delta}-1} \oplus U_{r+1,r+1})/ L \times L',
\]
where $K_{n+\betrag{\Delta}-1}$ is the complete graph matroid, $U_{r+1,r+1}$ is the 
uniform matroid of rank $r+1$ on $r+1$ elements and $L$ resp.\ $L'$ are their
respectively ``natural'' one-dimensional lineality spaces. This implies that there is an intersection product of cycles on $\mnlab(\Delta, \R^r)$ having the properties listed in theorem \ref{properties1}.
\end{example}

\section{Pull-back of cycles}

When dividing out a lineality space $q : X \rightarrow X/L$, we defined by
$q^{-1}(C)$ a very natural preimage for every cycle $C \in X/L$. 
Moreover,
when we consider a modification $\pi : \widetilde{X} \rightarrow X$ (with $X,\widetilde{X}$ smooth) given by 
a function $\varphi$ on $X$, then for each $C \in X$ there is also a natural 
lift $\widetilde{C}$ of $C$ to $\widetilde{X}$. Namely, we can restrict $\varphi$ 
to $C$ and define $\widetilde{C}$ to be the modification of $C$ by $\varphi|_C$.

In the following, we will see that both cases are examples of a more general
construction. 
This generalisation is useful when dealing with a chain of several
modifications and when showing that our intersection product agrees with the
definitions made in \cite{kristin}. A discussion of this construction for 
less general smooth varieties (in our terminology, in the case of only 
uniform matroids) can be found in \cite{lars}*{section 3}.

\begin{definition}
Let $f:X\rightarrow Y$ be a morphism of smooth tropical cycles. 
We define the pull-back of a cycle $C$ in $Y$ to be 
\[
  f^{\ast}C:=\pi_{\ast}(\Gamma_f\cdot (X\times C)),
\] 
where $\pi:X\times Y\rightarrow X$ is the projection to the first factor and $\Gamma_f$ is the graph of $f$ (that means $\Gamma_f:={\gamma_f}_{\ast}(X)$, with $\gamma_f:X\rightarrow X\times Y,\  x\mapsto (x,f(x))$.
\end{definition}

Note that here $\Gamma_f\cdot (X\times C)$ is an intersection product of cycles
in $X \times Y$, which is smooth by our assumptions. By definition, we see
that the codimension of $C$ in $Y$ equals the codimension of $f^* C$ in $X$ and 
$|f^* C| \subseteq f^{-1} |C|$. Moreover, we obviously have
$f^*(C + C') = f^* C + f^* C'$.

\begin{example}
\label{expull-back}
  Let us give some examples.
  \begin{enumerate}
    \item
      Let $f:X\rightarrow Y$ be a morphism of smooth tropical cycles.
      Then $f^* Y = X$. This follows easily from $\pi_* (\Gamma_f) = X$.
    \item
      Now we assume additionally that $C = \varphi_1 \cdots \varphi_l \cdot Y$
      is a subcycle of $Y$ cut out by some functions. Then we have
      \[
        f^* C = f^* \varphi_1 \cdots f^* \varphi_l \cdot X.
      \]
      Indeed, if we denote the two projections of $X \times Y$ by $\pi_X$ and 
      $\pi_Y$, then by definition the function $\pi_Y^* \varphi_1$ agrees
      on $\Gamma_f$ with the function $\pi_X^* f^* \varphi_1$ and the above 
      equation follows from projection formula.
    \item
      Let $\id : X \rightarrow X$ be the identity morphism. Then $\Gamma_{\id} = \Delta_X$,
      and we conclude $\id^* C = X \cdot C = C$ for all subcycles $C$ of $X$. 
    \item
      Let $p : X \times Y \rightarrow Y$ be a projection. Then 
      $\Gamma_p = X \times \Delta_Y$, and it follows easily that
      $p^* C = X \times C$ for all subcycles $C$ of $Y$.   
  \end{enumerate}
\end{example}

Our next goal is to prove the following properties of pull-backs:

\begin{theorem}
\label{pull-backproperties}
  Let $X$, $Y$ and $Z$ be smooth tropical varieties and let
  $f : X \rightarrow Y$ and $g : Y \rightarrow Z$ be two morphisms.
  Let $C, C'$ be two cycles in $Y$, $D$ a cycle in $X$ and
  $E$ a cycle in $Z$. 
  Then the following holds:
  \begin{enumerate}
    \item
      $C \cdot f_* D = f_*(f^* C \cdot D)$
    \item
      $f^*(C \cdot C') = f^* C \cdot f^* C'$
    \item
      $(g \circ f)^* E = f^* g^* E$
  \end{enumerate}
\end{theorem}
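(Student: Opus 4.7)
The plan is to prove (1), (3), (2) in that order, reducing each to the special case where the cycle being pulled back is cut out by rational functions, and then extending via the diagonal characterisation of the intersection product. A key identity used throughout will be
\[
\Gamma_f\cdot (X\times C) \;=\; (\gamma_f)_* f^*C
\]
for every subcycle $C$ of $Y$, together with its companion $\Gamma_f\cdot (D\times Y) = (\gamma_f)_* D$. Both follow from the observation that $\pi|_{\Gamma_f}$ is an isomorphism with inverse $\gamma_f$, so $(\gamma_f)_*\pi_*$ acts as the identity on cycles supported in $\Gamma_f$ (and $\Gamma_f\cdot (X\times C)$ is supported in $\Gamma_f$ by Theorem \ref{properties1}(2)).

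For the special case of (1) when $C = \psi_1\cdots\psi_l \cdot Y$ is cut out by rational functions, Example \ref{expull-back}(2) gives $f^*C = f^*\psi_1 \cdots f^*\psi_l \cdot X$ and Theorem \ref{properties1}(6) combined with the classical projection formula for rational functions (cf.\ \cite{AR,disshannes}) yields (1). The general case then follows from the diagonal characterisation $C\cdot D = \pi_*(\Delta_Y\cdot(C\times D))$ recorded in the remark after Theorem \ref{properties1}, which (locally) computes any intersection on $Y$ via a sequence of rational-function divisors cutting out $\Delta_Y$. Part (3) is treated analogously: when $E=\psi_1\cdots\psi_l\cdot Z$ the identity reduces to $(gf)^*\psi = f^*(g^*\psi)$ for rational functions, which is just the chain rule for pull-back of functions along a composition, and the general case follows by the same diagonal reduction.

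Once (1) and (3) are in place, part (2) will follow from the computation
\begin{align*}
f^*(C\cdot C')
&= \pi_*\bigl(\Gamma_f\cdot (X\times (C\cdot C'))\bigr) \\
&= \pi_*\bigl(\Gamma_f\cdot (X\times C)\cdot(X\times C')\bigr) \\
&= \pi_*\bigl((\gamma_f)_*f^*C\cdot (X\times C')\bigr) \\
&= \pi_*(\gamma_f)_*\bigl(f^*C\cdot f^*C'\bigr) \;=\; f^*C\cdot f^*C',
\end{align*}
using, in turn, Theorem \ref{properties1}(9) to split $X\times(C\cdot C') = (X\times C)\cdot(X\times C')$, the key identity, the projection formula (1) for $\gamma_f$ combined with $\gamma_f^*(X\times C')=f^*C'$ (which comes from $X\times C' = p_Y^*C'$ together with (3) applied to $p_Y\circ\gamma_f=f$), and finally $\pi\circ\gamma_f=\id_X$. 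The hard part will be making the diagonal-characterisation reduction rigorous in the general forms of (1) and (3): although $\Delta_Y$ need not be globally cut out by rational functions on an arbitrary smooth variety $Y$, intersection products are defined locally by the construction of Section 6, and locally $\Delta_{\trop(M)/L}$ is cut out by the explicit functions of Corollary \ref{diagonal}, so the reduction indeed goes through.
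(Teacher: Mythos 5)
Your special cases and your derivation of (2) are fine: the identity $\Gamma_f\cdot(X\times C)=(\gamma_f)_*f^*C$ does follow from the support statement plus $(\gamma_f)_*\pi_*=\id$ on cycles supported in $|\Gamma_f|$, Example \ref{expull-back}(2) settles (1) and (3) when the cycle being pulled back is cut out by rational functions, and the displayed chain for (2) is correct \emph{provided} (1) and (3) hold in full generality for arbitrary morphisms (you apply (1) to $\gamma_f$ with the cycle $X\times C'$, which is not cut out by functions). The gap is precisely in the sentence ``the general case then follows from the diagonal characterisation''. If you carry this out for (1) on a matroid variety, writing $C\cdot f_*D=\pi_*(\Delta_Y\cdot(C\times f_*D))$ and applying the rational-function case to $\id\times f$ (legitimate, since $\Delta_Y$ is cut out by the functions of Corollary \ref{diagonal}), you are left with two further statements: (i) the cycle $(\id\times f)^*\Delta_Y$ must be identified, \emph{with its weights}, with the graph of $f$, and (ii) you still need $\pi_{X*}\bigl(\Gamma_f\cdot(D\times C)\bigr)=f^*C\cdot D$. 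Neither follows from the special case, because the cycles now involved ($D\times Y$, $X\times C$, etc.) are not cut out by rational functions; invoking the projection formula for them is circular, since that formula is what is being proved. The same applies to your companion identity: $(\gamma_f)_*\pi_*=\id$ only gives $\Gamma_f\cdot(D\times Y)=(\gamma_f)_*\pi_*\bigl(\Gamma_f\cdot(D\times Y)\bigr)$, and the missing input $\pi_*\bigl(\Gamma_f\cdot(D\times Y)\bigr)=D$ is again an unproven instance of (1). This is exactly the content of the paper's Lemma \ref{help} (equations \eqref{A}, \eqref{B}, \eqref{B'}), whose proof is not formal: it uses irreducibility of matroid varieties and a degree-one computation to upgrade an equality of supports to an equality of cycles, and it is what makes $(f\times\id)^*\Delta_Y=\Gamma_f$ and the three-factor identities available; your proposal has no substitute for it, and likewise nothing replacing \eqref{B} in the general case of (3).

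A secondary gap is the passage beyond matroid varieties. Corollary \ref{diagonal} cuts out $\Delta_{\trop(M)}$ inside $\trop(M)\times\trop(M)$; the paper never asserts that $\Delta_{\trop(M)/L}$ is cut out by rational functions in $\trop(M)/L\times\trop(M)/L$ (see remark \ref{abc}, which is conditional on this), and instead passes to quotients via Proposition \ref{pull-back} and lemma \ref{qminus1} before invoking locality for general smooth varieties. Your chart-wise reduction therefore needs either that proposition or an argument that graphs, pull-backs and push-forwards (which are local on the target only) are compatible with restriction to charts; as written, the extension from $\trop(M)$ to $\trop(M)/L$ and then to smooth $X$ is asserted rather than proved.
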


In a first step we prove the theorem for matroid varieties $X,Y,Z$. We need the following lemma: 

\begin{lemma} \label{help}
    Let $f : X \rightarrow Y$ be a morphism between matroid varieties.
    Then we have 
    \begin{equation} \label{A}
      (\{x\} \times Y) \cdot \Gamma_f = \{(x, f(x))\}
    \end{equation}
    for each point $x$ of $X$. \\
    Let $g : Y \rightarrow Z$ be another morphism of matroid varieties and set
    $\Phi : X \rightarrow X \times Y \times Z$, 
    $x \mapsto (x, f(x), g(f(x)))$. Then we have
    \begin{equation} \label{B}
      \Phi_* X = (\Gamma_f \times Z) \cdot (X \times \Gamma_g).
    \end{equation}
    Analogously, if $h : X \rightarrow Z$ is another morphism of matroid varieties and we
    set $\Phi : X \rightarrow X \times Y \times Z$, 
    $x \mapsto (x, f(x), h(x))$, then we have
    \begin{equation} \label{B'}
      \Phi_* X = (\Gamma_f \times Z) \cdot (\Gamma_h \times Y).
    \end{equation}
    Note that, by abuse of notation with regard to the order
    of the factors, $(\Gamma_h \times Y)$ sits
    inside $X \times Y \times Z$. 
\end{lemma}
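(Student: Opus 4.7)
The plan is to prove all three equations by the same overall strategy. Each identity asserts that two cycles in $X\times Y$ (respectively $X\times Y\times Z$) agree; they have matching dimensions, and the coincidence of their supports is immediate from the graph definitions, so the whole content lies in matching multiplicities. For this I combine locality of the intersection product with the projection formula for rational functions along push-forwards, and I ultimately reduce everything to a transversality computation in a matroid fan.

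For equation (A), a dimension count gives $\dim(\{x\}\times Y)+\dim\Gamma_f=\dim(X\times Y)$, and set-theoretically $|\{x\}\times Y|\cap|\Gamma_f|=\{(x,f(x))\}$; hence the left hand side equals $m\cdot\{(x,f(x))\}$ for some $m\in\Z$. By the locality corollary preceding theorem~\ref{properties1}, the multiplicity $m$ can be read off from the stars of both cycles at $(x,f(x))$. These live in $\Star_{X\times Y}((x,f(x)))=\Star_X(x)\times\Star_Y(f(x))$, which is again a matroid variety by lemma~\ref{localmatroidfans}. The star of $\{x\}\times Y$ is $\{0\}\times\Star_Y(f(x))$, and the star of $\Gamma_f$ is the graph of the linearisation $df_x:\Star_X(x)\to\Star_Y(f(x))$. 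The graph of a linear map meets such a vertical fibre transversally at the origin with weight $1$, so $m=1$.

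For equation (B), set $\psi:=\gamma_f\times\id_Z:X\times Z\to X\times Y\times Z$, so that $\Gamma_f\times Z=\psi_*(X\times Z)$. By locality, around each intersection point the subcycle $X\times\Gamma_g$ can be cut out in $X\times Y\times Z$ by finitely many piecewise linear functions $\varphi_1,\ldots,\varphi_s$. Applying theorem~\ref{properties1}(6) together with the projection formula for rational functions along $\psi$, we obtain
\[
(\Gamma_f\times Z)\cdot(X\times\Gamma_g)=\varphi_1\cdots\varphi_s\cdot\psi_*(X\times Z)=\psi_*\bigl(\psi^*\varphi_1\cdots\psi^*\varphi_s\cdot(X\times Z)\bigr).
\]
Since $\psi(x,z)=(x,f(x),z)$, the set-theoretic vanishing of the $\psi^*\varphi_i$ on $X\times Z$ is exactly $\Gamma_{g\circ f}$; the weight-one claim reduces, locally at each point $(x,g(f(x)))$, to the same linear computation performed in (A). Hence the inner product equals $\Gamma_{g\circ f}$, and pushing forward gives $\psi_*\Gamma_{g\circ f}=\Phi_*X$, using $\psi\circ\gamma_{g\circ f}=\Phi$.

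Equation (B') follows by an identical argument with the roles of the factors permuted: one uses the same $\psi=\gamma_f\times\id_Z$ and observes $\psi^{-1}(\Gamma_h\times Y)=\Gamma_h$ (after the tacit reordering of factors), concluding via $\psi\circ\gamma_h=\Phi$. The main obstacle common to all three parts is the local multiplicity computation in matroid fans; this is ultimately an explicit transversality statement for linear maps, made accessible by the fact that stars of matroid varieties are again matroid varieties (lemma~\ref{localmatroidfans}) together with the local nature of our intersection product.
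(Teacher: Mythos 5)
Your overall strategy (match supports, then pin down a single multiplicity) parallels the paper's, but at every point where a multiplicity actually has to be computed you assert it rather than prove it, and these assertions are precisely the content of the lemma. In \eqref{A}, the reduction via locality to stars is fine (except that $f$ is only piecewise affine $\Z$-linear, so the star of $\Gamma_f$ is the graph of a piecewise linear map, not of a linearisation $df_x$), but the final step ``the graph of a linear map meets the vertical fibre transversally with weight $1$'' is unjustified in this framework: the intersection product is defined via the specific diagonal-cutting functions of corollary \ref{diagonal}, no comparison with a stable/transversal intersection rule is established in the paper, and you cannot straighten the graph by the shear $(a,b)\mapsto(a,b-f(a))$ because that map does not preserve the ambient matroid variety $\Star_X(x)\times\Star_Y(f(x))$, so lemma \ref{automorphism} is not applicable. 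The paper instead cuts out the point $x$ by rational functions on $X$ (possible since $\deg\trop(M)=1$), pulls them back along the projection to cut out $\{x\}\times Y$, and obtains weight $1$ from theorem \ref{properties}(6) and the projection formula along $\gamma_f$, using $\pi_X\circ\gamma_f=\id_X$; some argument of this kind is needed and is missing from your proof.

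The same issue recurs, with an added circularity, in \eqref{B} and \eqref{B'}. You assume that $X\times\Gamma_g$ is (locally) cut out by rational functions; but $\Gamma_g$ is not a matroid subfan, so the corollary at the end of section 3 does not apply, and in the paper this fact is only obtained later, in the proof of theorem \ref{pull-backproperties}, in the form $(g\times\id)^*\Delta_Z=\Gamma_g$ --- whose multiplicity statement is itself deduced from equation \eqref{A} of this very lemma. Likewise your closing claim that $\psi^*\varphi_1\cdots\psi^*\varphi_s\cdot(X\times Z)$ equals $\Gamma_{g\circ f}$ with weight one is again exactly the type of statement at issue, and saying it ``reduces to the same linear computation performed in (A)'' only repeats the unproven transversality assertion. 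The paper's route for \eqref{B} and \eqref{B'} avoids all of this: both sides are cycles of dimension $\dim X$ supported on $|\Phi_*X|$, which is irreducible, hence they differ by one global integer factor, and that factor is shown to be $1$ by intersecting with $\{x\}\times Y\times Z$ and using \eqref{A} together with the cross-product property (9) of theorem \ref{properties1}. So your proposal has a plausible skeleton, but the essential multiplicity computations --- the actual substance of the lemma --- are missing or rest on facts the paper derives from the lemma itself.
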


\begin{proof}
  We start with equation \eqref{A}. It is obvious that both sides
  are supported on the point $(x, f(x))$, so it suffices to check that
  the degree on the left hand side is $1$. To do this, we can
  assume that $x = 0$ is the origin and is cut out by rational functions on $X$.
  Then $\{x\} \times Y$ is cut out by the pull backs of these functions
  and the projection formula proves the claim. \\
  For equation \eqref{B}, we also start by noting that the supports on both
  sides must be equal. This follows from the fact that
  $\Phi_* X$ is irreducible 
  and the support of the right hand side
  is obviously contained in $|\Phi_* X|$.
  So again, both sides can only differ by a global factor.
  It is easy to see that this factor is indeed $1$: 
  For example, we can intersect both sides with $\{x\} \times Y \times Z$,
  where $x \in |X|$ is any point. Using equation
  \eqref{A} and part (9) of theorem \ref{properties1} it follows that we
  get $1 \cdot\{(x, f(x), g(f(x)))\}$
  on both sides. \\
  Equation \eqref{B'} can be proven completely analogously.
\end{proof}

\begin{proof}[Proof of theorem \ref{pull-backproperties} for matroid varieties $X,Y,Z$]
We give (rather short) proofs of the three properties if $X,Y,Z$ are matroid varieties. 
We skip the details
of a couple of straightforward computations which can be found in more
details in \cite{lars}. In what follows, $\pi := \pi_X$ 
denotes the projection
of a product of $X$, $Y$ and $Z$ to the factor $X$.

To prove (1), it essentially suffices to show $(f \times \id)^* \Delta_Y = \Gamma_f$,
where $f \times \id : X \times Y \rightarrow Y \times Y$.
Using this, a straightforward computation shows
\[
  C \cdot f_* D = f_* \pi_* (\Gamma_f \cdot D \times C) 
                = f_* (f^* C \cdot D).
\]
The equation 
$(f \times \id)^* \Delta_Y = \Gamma_f$ is clear set-theoretically and
the equality of weights can be checked using the first equation of 
lemma \ref{help} and part (2) of example \ref{expull-back}.

To prove (2), another computation shows
\[
  f^*(C \cdot C') = 
    \pi_* ((\Gamma_f \times Y) \cdot (X \times \Gamma_{\id_Y}) 
                  \cdot (X \times C \times C'))
\]
and 
\[
  f^* C \cdot f^* C' = 
    \pi_* (\pi_{1,2}^{\ast}\Gamma_f \cdot \pi_{1,3}^{\ast}\Gamma_f
                  \cdot (X \times C \times C')),
\]
with $\pi_{1,i}:X\times Y\times Y\rightarrow, (x,y_1,y_2)\mapsto (x,y_i)$.
Using both the second and third equality of lemma \ref{help} (with $h=f$ and $g=\id$) together with part (4) of example \ref{expull-back}, we see that 
both terms coincide.

To prove (3), we compute easily
\[
  (g \circ f)^* E = 
     \pi_* ( \Phi_* X \cdot (X \times Y \times E)),
\]
where $\Phi : X \rightarrow X \times Y \times Z$ maps $x$ to $(x, f(x), g(f(x)))$,
and
\[
  f^*g^* E = 
    \pi_* ( (\Gamma_f \times Z) \cdot (X \times \Gamma_g)
                  \cdot (X \times Y \times E) ).
\]
Using the second equation of lemma \ref{help} again, the claim follows.
\end{proof}

In order to extend this proof to arbitrary smooth varieties we need another technical proposition:

\begin{proposition}
\label{pull-back}
Consider the following commutative diagram of tropical morphisms.
\[
  \begin{CD}
    \trop(M)    @>g>>     \trop(N)\\
    @V{\qqq}VV @VV{\jj}V\\
    \trop(M)/L @>f>> \trop(N)/K
  \end{CD}
\]
Here $L,K$ are lineality spaces and $\qqq, \jj$ are the respective quotient maps.
Then the following equality holds: 
\[
  \qqq^{-1}f^{\ast}C = g^{\ast}\jj^{-1}C,
\]  
or equivalently
\[
  f^{\ast}C=(g^{\ast}\jj^{-1}C)/L.
\]  
\end{proposition}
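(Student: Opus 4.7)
The plan is to reduce the proposition to the functoriality of pullbacks (theorem~\ref{pull-backproperties}(3)), after first identifying the preimage $\qqq^{-1}$ of section~\ref{dividingout} with the formal pullback $\qqq^*$ of section~8 (and analogously for $\jj$).

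First I would establish $\qqq^* C = \qqq^{-1} C$ for every cycle $C$ in $\trop(M)/L$. Because $L$ is a lineality space of $\trop(M)$, any choice of rational linear complement $V$ to $L$ in the ambient vector space yields a tropical isomorphism $\trop(M) \cong L \times \trop(M)/L$ under which $\qqq$ becomes the projection onto the second factor. Example~\ref{expull-back}(4) then gives $\qqq^* C = L \times C$, which is precisely the preimage $\qqq^{-1} C$. The same argument applied to $\jj$ produces $\jj^* = \jj^{-1}$.

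With these identifications in hand, the proposition reduces to the formal chain
\[
g^* \jj^{-1} C \;=\; g^* \jj^* C \;=\; (\jj \circ g)^* C \;=\; (f \circ \qqq)^* C \;=\; \qqq^* f^* C \;=\; \qqq^{-1} f^* C,
\]
which uses theorem~\ref{pull-backproperties}(3) twice together with the given commutativity $\jj \circ g = f \circ \qqq$. The only genuinely delicate step is the preparatory identification $\qqq^* = \qqq^{-1}$, since it rests on a non-canonical splitting of the ambient vector space; however, the resulting cycle $L \times C$, transported back through any rational splitting, always gives the same preimage $\qqq^{-1} C$, so the choice is harmless. Once this identification is settled, nothing further is needed beyond the results already proven in section~8.
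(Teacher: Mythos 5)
Your main chain is circular in the context of this paper. You apply theorem \ref{pull-backproperties}(3) to the compositions $\jj\circ g$ and $f\circ\qqq$, but both of these involve the quotients $\trop(M)/L$ and $\trop(N)/K$ as source or target. At the point where proposition \ref{pull-back} has to be proved, the paper has established theorem \ref{pull-backproperties} only in the case where all three spaces are matroid varieties (via lemma \ref{help}); the extension to quotients of matroid varieties by lineality spaces --- and from there to smooth varieties --- is obtained in the paper \emph{by means of} proposition \ref{pull-back}. So the chain $g^{\ast}\jj^{\ast}C=(\jj\circ g)^{\ast}C=(f\circ\qqq)^{\ast}C=\qqq^{\ast}f^{\ast}C$ invokes exactly the instances of functoriality whose proof rests on the statement you are trying to prove. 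Note also that a quotient $\trop(M)/L$ is in general not itself a matroid variety (e.g.\ $\trop(U_{3,4})/L\cong L^3_2$ has no lineality space), so you cannot reduce these instances to the already-proven matroid case by a splitting. To repair the argument you would have to prove property (3) directly for morphisms with quotient source/target (say, by redoing lemma \ref{help} and the subsequent computations in that setting), which is essentially the work that proposition \ref{pull-back} is designed to replace.

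The preparatory identification $\qqq^{\ast}C=\qqq^{-1}C$ is a side issue: your argument via a lattice splitting $\trop(M)\cong L\times\trop(M)/L$ and example \ref{expull-back}(4) is reasonable (modulo noting that the pull-back is compatible with the splitting isomorphism), although in the paper this identification is itself deduced from proposition \ref{pull-back} afterwards. The paper's own proof avoids functoriality altogether: it compares the two graphs by showing the equality of cycles $(\qqq\times\jj)^{-1}\Gamma_f=(\id\times\jj)^{-1}(\id\times\jj)_{\ast}\Gamma_{g}$ (both have weight $1$ and the same support), and then uses lemma \ref{qminus1} to move $(\id\times\jj)^{-1}(\id\times\jj)_{\ast}$ past the intersection with $\trop(M)\times\jj^{-1}C$ before projecting; this relies only on the material of sections 4 and 5, not on theorem \ref{pull-backproperties}.
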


\begin{proof}
Since the cycles $\Gamma_f$ and $\Gamma_{g}$ carry only trivial weights, the equality
\[|(\qqq\times\jj)^{-1}\Gamma_f|=\{(x,y):x\in|\trop(M)|,\ \jj(y)=f\circ\qqq(x)\}=|(\id\times\jj)^{-1}(\id\times\jj)_{\ast}(\Gamma_{g})|\]
implies the equality of cycles \[(\qqq\times\jj)^{-1}\Gamma_f=(\id\times\jj)^{-1}(\id\times\jj)_{\ast}(\Gamma_{g}).\] Let $\pi:\trop(M)/L\times\trop(N)/K\rightarrow \trop(M)/L$ and $\widetilde{\pi}:\trop(M)\times\trop(N)\rightarrow\trop(M)$ projections to the first factor. It follows from the above equality that
\begin{eqnarray*}
f^{\ast}C &=& \pi_{\ast} (((\qqq\times\jj)^{-1}\Gamma_f\cdot (\trop(M)\times\jj^{-1}C))/L\times K) \\
&=& \pi_{\ast} (((\id\times\jj)^{-1}(\id\times\jj)_{\ast}(\Gamma_{g})\cdot (\trop(M)\times\jj^{-1}C))/L\times K).
\end{eqnarray*}
Aplying lemma \ref{qminus1} to the quotient map $(\id\times\jj)$, we see that the above is equal to
\begin{eqnarray*}
&& \pi_{\ast} (((\id\times\jj)^{-1}(\id\times\jj)_{\ast}(\Gamma_{g}\cdot (\trop(M)\times\jj^{-1}C)))/L\times K)\\
&=& \pi_{\ast} ((\id\times\jj)_{\ast}(\Gamma_{g}\cdot(\trop(M)\times\jj^{-1}C))/L\times \{0\})\\
&=& (\widetilde{\pi}_{\ast}(\Gamma_{g}\cdot (\trop(M)\times\jj^{-1}C))/L \\
&=& (g^{\ast}\jj^{-1}C)/L.
\end{eqnarray*}
\end{proof}

\begin{proof}[Proof of theorem \ref{pull-backproperties} for smooth cycles $X,Y,Z$]
We have already proved the claim for matroid varieties $X$, $Y$ and $Z$. 
Using proposition \ref{pull-back}, we see that theorem \ref{pull-backproperties} also holds if $X,Y,Z$ are quotients of matroid varieties by lineality spaces. 
Moreover, as all constructions are based on intersection products and therefore
are defined locally, the statements hold in fact for all smooth varieties
(in our sense).
\end{proof}

\begin{remark}
Let $\trop(M)$ be a matroid variety with lineality space $L$ and let
$\qqq : \trop(M) \rightarrow \trop(M)/L$ be the quotient map. Then the pull-back
$\qqq^* (C)$ coincides with $\qqq^{-1}(C)$ as defined previously. This is a direct consequence
of proposition \ref{pull-back}. 
\end{remark}

\begin{remark}
\label{f_*f^*}
  Let $f : X \rightarrow Y$ be a morphism of smooth tropical varieties
  such that $f_*(X) = Y$. Then it follows from 
  the first part of theorem \ref{pull-backproperties}
  that $f_*f^*(C) = C$ holds for any cycle $C$ in $Y$.
\end{remark}

We now come back to the meaning of pull-backs in the case of modifications. 

\begin{lemma}
\label{pull-backmodification}
Let $Q$, $M$ and $N$ be (loopfree) matroids and let $e$ be an element in $Q$ 
(which is not a coloop) such that $Q \setminus e = M$, $Q/e = N$.
Consider the corresponding modification $\pi : \trop(Q) \rightarrow\trop(M)$ 
and let $\varphi$ be the modification function on $\trop(M)$ (as described in 
proposition \ref{modification}). 
For any subcycle $C$ of $\trop(M)$, let $\widetilde{C}$ be the modification of
$C$ by $\varphi$. Then the equality
\[
  \widetilde{C} = \pi^{\ast}C
\] 
holds.
\end{lemma}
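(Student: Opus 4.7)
The plan is to verify the equality locally by reducing to the case of divisors cut out by rational functions, then propagating this via example \ref{expull-back} (2) and a one-step modification/divisor compatibility.

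Both $\widetilde{C}$ and $\pi^* C$ are defined by local intersection-theoretic operations (the pull-back via the local description of the intersection product used in theorem \ref{properties1}, and the modification by its tropical completion characterization, cf.\ proposition \ref{modification}), so it suffices to establish the equality in a neighborhood of each point of $\trop(Q)$. In such a neighborhood I may cut out $C$ by rational functions on $\trop(M)$, writing $C = \psi_1 \cdots \psi_k \cdot \trop(M)$ with $k = \codim_{\trop(M)} C$. Then example \ref{expull-back} (2) gives
\[
\pi^* C \;=\; (\pi^* \psi_1) \cdots (\pi^* \psi_k) \cdot \trop(Q),
\]
with $\pi^* \psi_i = \psi_i \circ \pi$, so $\pi^* C$ is presented as an iterated divisor on $\trop(Q)$.

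From here the plan is to identify this iterated divisor with $\widetilde{C}$. The key is the following one-step compatibility: for every rational function $\psi$ on a subcycle $Y \subseteq \trop(M)$, the divisor $(\psi \circ \pi) \cdot \widetilde{Y}$ on $\widetilde{Y}$ equals the modification of $\psi \cdot Y$ along the restricted function $\varphi|_{\psi \cdot Y}$. Granting this, a straightforward induction on $k$ (applied successively to $Y = \trop(M), \psi_1 \cdot \trop(M), \ldots$) shows that $(\pi^*\psi_1) \cdots (\pi^* \psi_k) \cdot \trop(Q)$ is precisely the modification of $\psi_1 \cdots \psi_k \cdot \trop(M) = C$ by $\varphi|_C$, which is $\widetilde{C}$ by definition.

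The main obstacle is therefore the one-step compatibility statement. To prove it, I would unwind the description of $\widetilde{Y}$ as the tropical completion of the graph of $\varphi|_Y$ in $Y \times \R$, with additional vertical facets in direction $V_{\{e\}}$ (as in proposition \ref{modification} together with \cite{AR}*{construction 3.3}). Since $\pi^*\psi = \psi \circ \pi$ agrees with $\psi$ along the graph portion and is $V_{\{e\}}$-invariant along vertical facets, the support of $(\pi^* \psi) \cdot \widetilde{Y}$ projects under $\pi$ onto $|\psi \cdot Y|$ with the vertical facets lying exactly over $|\psi \cdot Y|$. Matching weights then becomes a local balancing computation: along a codimension-one face of the graph portion the slope-change of $\pi^* \psi$ reproduces the weights of $\psi \cdot Y$, while the weights of the vertical facets above $|\psi \cdot Y|$ come out to the jumps of $\varphi|_Y$ across that face, which is the very recipe for the vertical facet weights in the tropical completion that defines the modification of $\psi \cdot Y$ by $\varphi|_{\psi \cdot Y}$. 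This finishes the proof.
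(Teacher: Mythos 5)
Your argument collapses at its first reduction: the claim that, near any point, an arbitrary subcycle $C$ of $\trop(M)$ can be written as $\psi_1\cdots\psi_k\cdot\trop(M)$ for rational functions $\psi_i$. Nothing in the paper (or in the literature it relies on) provides such a statement, and it is precisely the assumption the paper is careful to avoid throughout: the proof of property (4) of theorem \ref{properties} uses an irreducibility/degree argument instead of writing $C$ as a complete intersection, lemma \ref{automorphism} invokes the cut-out case only for the diagonal (which \emph{is} explicitly cut out by corollary \ref{diagonal}), and remark \ref{independent} reduces everything to intersecting with the diagonal for exactly this reason. It is not known -- and not expected -- that every tropical cycle in a matroid variety (even in $\R^n$, even locally, i.e.\ as a fan subcycle of the star $\trop(M_p)$) is a complete intersection of rational functions; only the codimension-one case is available. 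So example \ref{expull-back}~(2) cannot be brought to bear on a general $C$, and the whole induction has no starting presentation to run on. A secondary issue is that your ``one-step compatibility'' ($(\psi\circ\pi)\cdot\widetilde{Y}$ equals the modification of $\psi\cdot Y$ along $\varphi|_{\psi\cdot Y}$) is itself a nontrivial lemma whose weight-matching you only sketch; it is plausible, but as written it is an assertion, not a proof.

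The paper's route avoids both problems and works for arbitrary $C$: it uses that the modification $\widetilde{C}$ is uniquely characterised by the two conditions $\pi_{\ast}\widetilde{C}=C$ and $\widetilde{C}^{\cap e}=\varphi\cdot C$, and then verifies these for $\pi^{\ast}C$ directly -- the first from $\pi_{\ast}\pi^{\ast}C=C$ (remark \ref{f_*f^*}), the second by writing $\trop(N)=\pi_{\ast}(\max\{x_e,-r\}\cdot\trop(Q))$ for $r\gg 0$ and applying the projection formula of theorem \ref{pull-backproperties}~(1) together with $\varphi\cdot C=C\cdot\trop(N)$ (theorem \ref{properties}~(6)). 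If you want to salvage your approach, you would either have to prove the one-step compatibility for arbitrary subcycles $Y$ and then apply it with $Y=\trop(M)$ and $C$ replaced by the characterising data above, or restrict your reduction to cycles that are genuinely known to be cut out by functions -- but for general $C$ some argument like the paper's characterisation seems unavoidable.
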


\begin{tabular}{p{6cm}p{1cm}p{4cm}}
\parbox[c]{1em}{\includegraphics[scale=0.26]{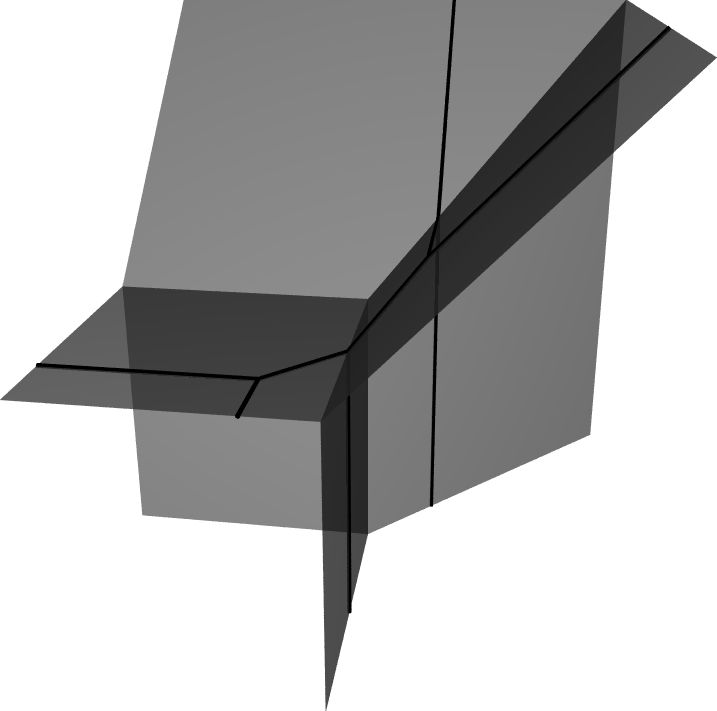}}
&
$\stackrel{\pi}{\longrightarrow}$ &
\parbox[c]{1em}{\includegraphics[scale=0.2]{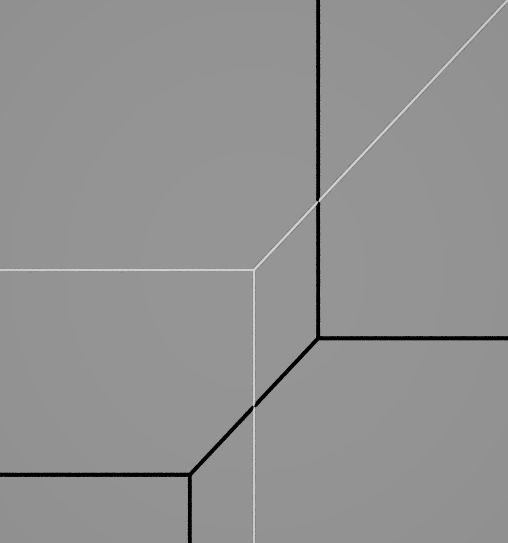}}\\
\multicolumn{3}{c}{Modification of a cycle and its intersection with $\trop(N)$}
\end{tabular}

\begin{proof}
The modification of $C$ along the function $\varphi$ is the (uniquely determined) cycle $\widetilde{C}$ in $\trop(Q)$ satisfying $\pi_{\ast}\widetilde{C}=C$ and 
$\widetilde{C}^{\cap e} = \varphi\cdot C$. We show that $\pi^{\ast} C$ fulfils those two conditions: As $\pi_{\ast}\trop(Q)=\trop(M)$, the first equality follows from
remark \ref{f_*f^*}. 
As for $(\pi^{\ast} C)^{\cap e} = \varphi \cdot C$, we first pick a
real constant $r$ large enough such that 
$\trop(N) = \pi_* (\max\{x_e, -r\} \cdot \trop(Q))$ 
holds.
Applying theorem \ref{pull-backproperties} again provides
\[
  \varphi\cdot C  
    = C\cdot \trop(N)
    = \pi_{\ast}(\pi^{\ast} C \cdot \max\{x_e,-r\} \cdot \trop(Q))
\] 
It follows that $\varphi\cdot C=(\pi^{\ast} C)^{\cap e}$.
\end{proof}

Applying this lemma to a whole series of modifications, we get the following corollary.

\begin{corollary}
  Let $\trop(Q)$ and $\trop(M)$ be matroid varieties such that $Q \setminus R = M$
  (for suitable $R$) and choose a series of matroid modifications
  \[
    \trop(Q) = \trop(M_0) \stackrel{\pi_1}{\rightarrow} 
               \trop(M_1) \stackrel{\pi_2}{\rightarrow} 
               \ldots     \stackrel{\pi_n}{\rightarrow} 
               \trop(M_n) = \trop(M).
  \]
  Let $C$ be a cycle in $\trop(M)$ and let $\widetilde{C}$ be its repeated
  modification along $\pi_n, \ldots, \pi_2, \pi_1$. Then $\widetilde{C}$ is
  in fact independent of the chosen series of modifications.
  
  Moreover, let $\trop(N) \subseteq \trop(M)$ be two matroid varieties
  and let $Q$ be the matroid such that $Q \setminus R = M$ and $Q/R = N$ (cf.\ proof of proposition \ref{quotient}).
  Let $C$ be any cycle in $\trop(M)$. 
  Then the intersection product $\trop(N) \cdot C$ can be computed as
  $(\pi^*C)^{\cap R}$, where $\pi : \trop(Q) \rightarrow \trop(M)$.
  In other words, we get $\trop(N) \cdot C$ by performing a series of 
  modifications that lift $C$ to a cycle in $\trop(Q)$, 
  and then intersecting with a boundary part.
\end{corollary}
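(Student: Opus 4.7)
The first statement follows formally from lemma \ref{pull-backmodification} combined with functoriality of pull-back. Each elementary step $\pi_i$ lifts the current cycle by $\pi_i^*$, so iterating and applying theorem \ref{pull-backproperties}(3) gives
\[
  \widetilde{C} = \pi_1^*\pi_2^* \cdots \pi_n^* C = (\pi_n \circ \cdots \circ \pi_1)^* C = \pi^* C,
\]
which depends only on the composition $\pi$ and hence is independent of the series of modifications.

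For the second statement, the plan is to express $\trop(N)$ as a pushforward and apply the projection formula from theorem \ref{pull-backproperties}(1). Writing $R = \{e_1, \ldots, e_k\}$ and $\varphi^\lambda_e := \max\{x_e, -\lambda\}$, the key claim is that for all sufficiently large real numbers $\lambda$,
\[
  \trop(N) = \pi_*\bigl( \varphi^\lambda_{e_1} \cdots \varphi^\lambda_{e_k} \cdot \trop(Q) \bigr).
\]
Assuming this, theorem \ref{pull-backproperties}(1) together with the identity $\pi^* C \cdot \trop(Q) = \pi^* C$ (valid by theorem \ref{properties1}(4), since $\pi^* C$ is a subcycle of $\trop(Q)$) yields
\[
  \trop(N) \cdot C = \pi_*\bigl( \pi^* C \cdot \varphi^\lambda_{e_1} \cdots \varphi^\lambda_{e_k} \bigr),
\]
which is precisely the definition of $(\pi^* C)^{\cap R}$ for $\lambda$ large enough.

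The main obstacle is establishing the displayed pushforward formula for $\trop(N)$, which generalises the $|R|=1$ version quoted in the proof of lemma \ref{pull-backmodification}. I plan to proceed by induction on $|R|$: factor $\pi = \pi'' \circ \pi'$ with $\pi'$ projecting out $R' := R \setminus \{e_k\}$ and $\pi''$ projecting out $e_k$. The inductive hypothesis applied to $(Q, R')$ yields $\pi'_*\bigl(\prod_{i < k} \varphi^\lambda_{e_i} \cdot \trop(Q)\bigr) = \trop(Q/R')$, and the base case applied to the elementary deletion $(Q/R') \setminus e_k$ identifies $\pi''_*(\varphi^\lambda_{e_k} \cdot \trop(Q/R'))$ with $\trop((Q/R')/e_k) = \trop(Q/R) = \trop(N)$. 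Since $\varphi^\lambda_{e_k}$ depends only on coordinates preserved by $\pi'$, it commutes past $\pi'_*$ by the projection formula; chaining the two pushforward identities then gives the claim. The only matroid-theoretic ingredient is the associativity $(Q/R')/e_k = Q/R$.
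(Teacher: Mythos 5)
Your proposal is correct, and its first half is exactly the paper's (unwritten) argument: the paper derives this corollary simply by ``applying lemma \ref{pull-backmodification} to a whole series of modifications,'' i.e.\ each elementary lift is a pull-back and theorem \ref{pull-backproperties}(3) makes the composite independent of the chosen chain. For the second half you take a slightly different route than the intended iteration: rather than running through the chain of elementary modifications and using that $\trop(N)=\varphi_1\cdots\varphi_k\cdot\trop(M)$ together with theorem \ref{properties}(6), you prove the $|R|$-element pushforward identity $\trop(N)=\pi_*\bigl(\prod_{e\in R}\max\{x_e,-\lambda\}\cdot\trop(Q)\bigr)$ by induction (single-element case as in the proof of lemma \ref{pull-backmodification}, plus the projection formula to slide $\max\{x_{e_k},-\lambda\}=\pi'^*\psi$ past $\pi'_*$) and then apply theorem \ref{pull-backproperties}(1) once; this is in effect a rerun of the proof of lemma \ref{pull-backmodification} with $|R|$ elements instead of one, which is a clean and arguably more uniform way to package the same ingredients. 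Two small points you should make explicit: the intermediate object $Q/R'$ must be a loopfree matroid with $e_k$ not a coloop for the base case to apply, and both facts follow from remark \ref{Q} (since $R$ is independent and closed in $Q$, every $R'\subseteq R$ is a flat, and $\rank_Q(E)=\rank(Q)$ forces $e_k$ to be a non-coloop of $Q/R'$); and the final identification of $\pi_*\bigl(\prod_e\max\{x_e,-\lambda\}\cdot\pi^*C\bigr)$ with the boundary slice $(\pi^*C)^{\cap R}$ is not literally the definition but needs the (easy, $\lambda\gg 0$) observation that cutting the coordinates of $R$ one at a time agrees with the simultaneous slice --- the same step the paper itself glosses over at the end of the proof of lemma \ref{pull-backmodification} for $|R|=1$. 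With these remarks added, your sketch is a complete and faithful proof.
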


Another important consequence of lemma \ref{pull-backmodification} is that
we can now prove that our intersection product coincides with the definitions 
made in \cite{kristin}.

\begin{theorem} \label{compkristin}
  Let $\trop(M)$ be a matroid variety and let $C,D$ be two cycles in $\trop(M)$.
  We denote by $C.D$ the recursive intersection product defined in 
  \cite{kristin}*{definition 3.6}. Then this intersection product coincides with
  the one defined in definition \ref{intersectionproduct}, i.e.\
  \[
    C.D = C \cdot D.
  \]
\end{theorem}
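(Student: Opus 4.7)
The plan is to reduce both intersection products to the same modification recipe and then appeal to the machinery built up earlier in this section.

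First I would handle the special case in which one factor is a sub-matroid-variety, i.e.\ show $\trop(N) \cdot C = \trop(N).C$ for any $\trop(N) \subseteq \trop(M)$ and any cycle $C$ in $\trop(M)$. By Corollary \ref{zwischenmatroid} we can factor $\trop(N) \subseteq \trop(M)$ through a chain of elementary quotients, so it suffices to treat the case where $N$ is an elementary quotient of $M$. Proposition \ref{modification} then identifies the corresponding $\pi:\trop(Q)\to\trop(M)$ as the tropical modification whose divisor is exactly $\trop(N)$, and the corollary following Lemma \ref{pull-backmodification} gives
\[
  \trop(N)\cdot C = (\pi^{\ast}C)^{\cap e}.
\]
This is exactly the recipe used in \cite{kristin}*{definition 3.6}: lift $C$ through the modification and intersect with the exceptional boundary. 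So the two definitions agree in this case; iteration over the chain of elementary quotients in Corollary \ref{zwischenmatroid} handles higher codimension.

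Second, I would reduce the general case to the previous one. Any cycle $D$ in $\trop(M)$ is locally of the form $D = \psi_1 \cdots \psi_s \cdot \trop(M)$ for rational functions $\psi_i$; this is a standard consequence of the local irreducibility of $\trop(M)$ (Lemma proved after Lemma \ref{connectedcomponents}) and can be done on a cover. Our product satisfies $C \cdot (\psi\cdot\trop(M)) = \psi \cdot C$ by part (6) of Theorem \ref{properties}, and Shaw's product is built to respect intersections with Cartier divisors in the same way. Hence both reduce to successively intersecting $C$ with the individual divisors $\psi_i \cdot \trop(M_{i-1})$, each of which is locally a matroid subvariety and thus covered by Step 1. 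An induction on $s$ finishes the global identity, using that both products are local.

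The main obstacle I expect is the bookkeeping in matching Shaw's recursive modification data to the modification functions produced by Proposition \ref{modification}: at each elementary quotient step one must check that her chosen modification function and our $\varphi$ differ by a globally affine linear function (which does not affect the outcome of any intersection) and that the induced pull-backs and boundary restrictions really coincide. Once this dictionary is set up cleanly, the inductive comparison is straightforward, but keeping track of this across a sequence of modifications, and ensuring locality is preserved throughout, is the delicate part of the argument.
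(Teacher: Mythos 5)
Your Step 2 contains a genuine gap: the claim that an arbitrary cycle $D$ in $\trop(M)$ is locally of the form $\psi_1\cdots\psi_s\cdot\trop(M)$ is not a consequence of local irreducibility and is false in general --- tropical cycles need not be, even locally, complete intersections of Cartier divisors, and this is precisely why the intersection product has to be defined via cutting out the diagonal rather than by writing one of the factors as a product of divisors. (Note that in the proof of theorem \ref{properties}(4) the functions $\psi_1,\ldots,\psi_{\dim C}$ are chosen so that $\psi_1\cdots\psi_{\dim C}\cdot C\neq 0$; they do not cut $C$ out of $\trop(M)$.) Since your reduction of the general case to Step 1 rests entirely on this local complete intersection claim (plus an unproven compatibility of Shaw's product with Cartier divisors), the argument does not go through. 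In addition, Step 1 mischaracterizes \cite{kristin}*{definition 3.6}: the recipe ``lift $C$ through the modification and intersect with the boundary'' computes $\trop(N)\cdot C$ (this is the corollary preceding theorem \ref{compkristin}), but Shaw's definition 3.6 is a recursive formula for the product of two \emph{arbitrary} cycles, so agreement in the case where one factor is a matroid subvariety is not the right special case to iterate.

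The paper's proof instead works directly with the recursive structure of Shaw's definition: her product is determined by the base case $\R^n$ (where the two products agree) together with the recursion
$C.D=\pi^*(\pi_*C.\pi_*D)+\pi^*\pi_*C.\Delta_D+\Delta_C.\pi^*\pi_*D+\Delta_C.\Delta_D$
along a modification $\pi:\trop(M)\rightarrow\trop(M\setminus e)$, with $\Delta_C=C-\pi^*\pi_*C$ and $\Delta_D=D-\pi^*\pi_*D$. Lemma \ref{pull-backmodification} supplies exactly the dictionary you anticipated in your last paragraph (Shaw's modification lift of a cycle equals the pull-back $\pi^*$), and then bilinearity of the product of definition \ref{intersectionproduct} together with $\pi^*(\pi_*C\cdot\pi_*D)=\pi^*\pi_*C\cdot\pi^*\pi_*D$ (theorem \ref{pull-backproperties}(2)) shows that our product satisfies the same recursion, so the two products coincide by induction. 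Any repair of your approach would in effect have to verify this recursion (or Shaw's full list of defining properties) for the diagonal-based product anyway, which is what the paper does.
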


\begin{proof}
  The intersection product $C.D$ of \cite{kristin}*{definition 3.6} is defined
  recursively via modifications. Finally, the recursion uses the known intersection
  product on $\R^n$. As our definition gives back the same product on $\R^n$, we
  have agreement here. It remains to check that our definition satisfies the
  same recursion formula given by
  \[
    C.D = \pi^*(\pi_* C . \pi_* D) + \pi^*\pi_* C.\Delta_D
          + \Delta_C.\pi^*\pi_* D + \Delta_C.\Delta_D,
  \]
  where $\pi : \trop(M) \rightarrow \trop(M\setminus e)$ is a modification and
  $\Delta_C = C - \pi^*\pi_* C$ resp.\ $\Delta_D = D - \pi^*\pi_* D$. 
  Note that in \cite{kristin} $\pi^* E$ is defined as the (restricted) modification of
  $E$, but by lemma \ref{pull-backmodification} we know that we can also use our 
  pull-back definition instead. Writing $C = \pi^*\pi_* C + \Delta_C$
  and $D = \pi^*\pi_* D + \Delta_D$ we get
  \[
    C \cdot D = \pi^*\pi_* C \cdot \pi^*\pi_* D + \pi^*\pi_* C \cdot \Delta_D
          + \Delta_C \cdot \pi^*\pi_* D + \Delta_C \cdot \Delta_D,
  \]
  noting that the first term equals $\pi^*\pi_* C \cdot \pi^*\pi_* D
  = \pi^*(\pi_* C \cdot \pi_* D)$ by theorem \ref{pull-backproperties}
  property (2). So our intersection product satisfies the same recursion formula
  and therefore the definitions agree.
\end{proof}

\section{Rational equivalence on matroid varieties}

Let $C$ be a cycle in $\trop(M)/L$. 
Then by contracting all bounded parts of $C$ to the origin, we
get the so-called \emph{recession cycle} $\delta(X)$ of $X$ (cf.\
\cite{AR2}*{definition 8}). As a set, $|\delta(X)|$ is the limit 
of $t \cdot |C|$ when $t$ goes to zero. The aim of this section is to show
that $C$ is \emph{rationally equivalent} to $\delta(C)$. In this context,
rational equivalence in $\trop(M)/L$ is generated by those cycles which 
are push-forwards along some tropical morphism $f : A  \rightarrow \trop(M)/L$
of a cycle $\varphi \cdot A$, where $\varphi$ is a bounded function
(cf.\ \cite{AR2}*{definition 1}).
Note that, by definition, if $C \sim 0$ in the ambient
space $X$, then $C \sim 0$ also holds in any larger ambient space $Y \supseteq X$.
Our first statement is again concerned with dividing out a lineality space. 

\begin{center}
\includegraphics[scale=0.3]{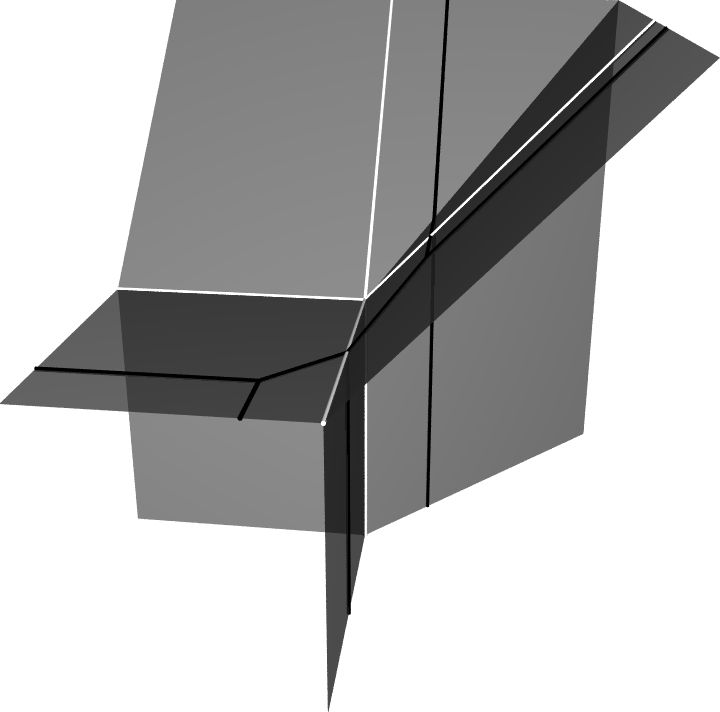}
\end{center}
\begin{center}
A curve on $\trop(U_{3,4})/L$ and its recession cycle.
\end{center}

\begin{proposition}
\label{ratequivlinspace}
Let $X$ be a cycle with lineality space $L$. Let $C$ be a subcycle of $X$ also having lineality space $L$. Then $C$ is rationally equivalent to zero on $X$ if and only if $C/L$ is rationally equivalent to zero on $X/L$.
\end{proposition}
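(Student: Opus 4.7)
The plan is to reduce both implications to the single fact that pull-back of cycles (defined in section~8) preserves rational equivalence, and then apply this to the quotient map $q : X \to X/L$ and to a section $s : X/L \to X$ of $q$.

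The key lemma states: if $f : Y \to Z$ is a tropical morphism and $E = g_*(\varphi \cdot A)$ on $Z$ with $\varphi$ bounded, then $f^* E$ is again of the form ``push-forward of a bounded divisor''. Starting from the definition $f^* E = \pi_*(\Gamma_f \cdot (Y \times E))$ and applying the projection formula (theorem~\ref{pull-backproperties}(1)) to $\mathrm{id}_Y \times g : Y \times A \to Y \times Z$, one rewrites
\[
  f^* E \;=\; (p_Y)_*\bigl( (p_A^*\varphi) \cdot (\mathrm{id}_Y \times g)^*\Gamma_f \bigr),
\]
where $p_Y, p_A$ are the projections from $Y \times A$. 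The cycle $(\mathrm{id}_Y \times g)^*\Gamma_f$ is supported on the tropical fibre product $\{(y,a) : f(y) = g(a)\}$, and $p_A^*\varphi$ is bounded because $\varphi$ is. Linearity then extends this to arbitrary rationally trivial classes, so pull-back sends $\sim 0$ to $\sim 0$.

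The ``$\Leftarrow$'' direction is then immediate: the remark after theorem~\ref{pull-backproperties} gives $q^*(C/L) = q^{-1}(C/L) = C$, so $C/L \sim 0$ on $X/L$ forces $C \sim 0$ on $X$. For ``$\Rightarrow$'', I would choose a $\Z$-linear section $s : V/L \to V$ of the quotient of ambient vector spaces; such a section exists because the lineality spaces under consideration are rational, so $\Lambda/(\Lambda \cap L)$ splits off the ambient lattice. Since $X = q^{-1}(X/L)$, we have $s(X/L) \subseteq X$, and hence $s$ restricts to a tropical morphism $s : X/L \to X$ with $q \circ s = \mathrm{id}_{X/L}$. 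Functoriality of pull-back (theorem~\ref{pull-backproperties}(3)) then yields $s^* \circ q^* = \mathrm{id}$ on cycles of $X/L$, so
\[
  s^*(C) \;=\; s^*\bigl( q^*(C/L) \bigr) \;=\; C/L.
\]
If $C \sim 0$ on $X$, applying the key lemma to $s$ gives $C/L = s^*(C) \sim 0$ on $X/L$.

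The main obstacle is the verification of the key lemma: one has to check that the projection-formula rewriting is honest and that $(\mathrm{id}_Y \times g)^*\Gamma_f$ is a genuine tropical cycle sitting inside a smooth ambient space so that theorem~\ref{pull-backproperties} is applicable. The existence of the $\Z$-linear section in the ``$\Rightarrow$'' direction also deserves a remark, but is unproblematic once one notes that lineality spaces of tropical cycles are spanned by rational vectors.
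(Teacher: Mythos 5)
Your reduction to the claim ``pull-back of cycles preserves rational equivalence'' does not work in the framework of the paper, and the point you flag as the ``main obstacle'' is in fact a genuine gap, not a routine verification. In the definition of rational equivalence the cycle $A$ carrying the bounded function $\varphi$ is an \emph{arbitrary} tropical cycle with a morphism $g: A \to Z$; it is not smooth. Your rewriting of $f^*g_*(\varphi\cdot A)$ uses the cycle-level projection formula (theorem \ref{pull-backproperties}(1)) for $\id_Y\times g: Y\times A \to Y\times Z$ and the pull-back cycle $(\id_Y\times g)^*\Gamma_f$; both are only defined when source and target are smooth, so they are simply not available for $Y\times A$. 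There is also a circularity warning here: in the paper the compatibility of pull-backs with rational equivalence (remark \ref{easyremark}) is \emph{deduced from} proposition \ref{ratequivlinspace}, so your key lemma would need an independent proof --- which is exactly the part that is missing. Finally, the proposition is stated for an arbitrary cycle $X$ with lineality space $L$, while everything in your argument ($q^*$, the section $s^*$, functoriality) requires $X$ to be smooth, so even a repaired version would only prove a special case.

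The paper's proof avoids cycle-level pull-backs entirely and works for arbitrary $X$: identify $X\cong X/L\times L$; the ``if'' direction is then the statement that cross products preserve rational equivalence (\cite{AR2}, lemma 2(a)). For ``only if'', write $C=f_*(\varphi\cdot A)$ with $\varphi$ bounded, choose rational functions $\psi_1,\ldots,\psi_{\dim L}$ on $L$ with $\psi_1\cdots\psi_{\dim L}\cdot L=\{0\}$, and replace $A$ by $A':=f^*\pi_L^*\psi_1\cdots f^*\pi_L^*\psi_{\dim L}\cdot A$; here $f^*$ is only applied to rational functions (i.e.\ composition), which needs no smoothness. The projection formula for divisors then gives $(\pi_{X/L}\circ f)_*(\varphi\cdot A')=C/L$, exhibiting $C/L\sim 0$. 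If you want to salvage your approach, you would have to either restrict the statement to smooth $X$ and prove your key lemma by a direct argument of this divisor-theoretic kind, or adopt the paper's construction outright.
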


\begin{proof}
As $X$ is isomorphic to $X/L\times L$, it suffices to show that $C/L\times L$ is rationally equivalent to zero on $X/L\times L$ if and only if $C/L$ is rationally equivalent to zero on $X/L$. The if-implication was proved in \cite[lemma 2 (a)]{AR2}. So let us assume that $C/L\times L$ is rationally equivalent to zero. That means by definition that there are a morphism $f:A\rightarrow X/L\times L$ and a bounded rational function $\varphi$ on $A$ such that $f_{\ast}(\varphi\cdot A)=C/L\times L$. Let $\pi_{X/L}:X/L\times L\rightarrow X/L$ and $\pi_L:X/L\times L\rightarrow L$ be projections to the respective factor. We choose rational function on $L$ such that $\psi_1\cdots\psi_{\dim L}\cdot L=\{0\}$. Now we just
replace $A$ by 
$A' := f^{\ast}\pi_L^{\ast}\Psi_1\cdots f^{\ast}\pi_L^{\ast}\Psi_{\dim L}\cdot A$
and check by projection formula that 
$(\pi_{X/L}\circ f)_{\ast}(\varphi \cdot A') = C/L$ holds.
\end{proof}

\begin{remark}
\label{easyremark}
Note that on matroid varieties modulo lineality spaces $\trop(M)/L$, 
intersection products
and pull-backs of cycles are compatible with rational equivalence.
In other words, if $C$ and $C'$ are cycles in $\trop(M)/L$ with 
$C \sim C'$, then also $f^* C \sim f^* C'$ and $C \cdot D \sim C' \cdot D$
for any morphism $f : \trop(N)/K \rightarrow \trop(M)/L$ and any third
cycle $D$ in $\trop(M)/L$.
This follows from the fact that cross products, intersections with rational 
functions as well as push-forwards are compatible with rational equivalence 
(cf.\ \cite{AR2}*{lemma 2}), and the previous proposition.
\end{remark}

In the following, if $a$ is an element of the matroid $M$, we
denote the corresponding projection by
$\pi_a : \trop(M) \rightarrow \trop(M \setminus a)$.
Furthermore, if $\F$ is a chain of flats in $M$, then $\F\setminus a$ denotes the chain of flats in $M\setminus a$ 
obtained by intersecting each flat of $\F$ with $E(M)\setminus a$.

In order to prove that every cycle in $\trop(M)/L$ is rationally equivalent to its recession cycle, we need the following lemmas: 

\begin{lemma}
\label{pia1}
Let $a,b\in E(M)$ be no coloops and assume that $\{b\}$ is a flat. 
Let $C$ be a subcycle of $\trop(M)$ with ${\pi_a}_{\ast} C=0$. Then ${\pi_a}_{\ast}\pi_b^{\ast}{\pi_b}_{\ast}C=0$. 
\end{lemma}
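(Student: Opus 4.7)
The plan is to deduce the statement from a base change identity. Consider the commutative square of coordinate projections
\[
\begin{CD}
\trop(M) @>{\pi_b}>> \trop(M\setminus b) \\
@V{\pi_a}VV @VV{q_a}V \\
\trop(M\setminus a) @>{q_b}>> \trop(M\setminus\{a,b\})
\end{CD}
\]
Functoriality of push-forward gives $q_{a*}\pi_{b*}C = q_{b*}\pi_{a*}C = q_{b*}(0) = 0$. The main step is to establish the base change identity
\[
\pi_{a*}\pi_b^*D = q_b^*q_{a*}D \qquad \text{for cycles $D$ in $\trop(M\setminus b)$};
\]
applied to $D = \pi_{b*}C$ this immediately yields $\pi_{a*}\pi_b^*\pi_{b*}C = q_b^*q_{a*}\pi_{b*}C = q_b^*(0) = 0$.

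To prove the base change, the plan is to exploit that $\pi_b$ is a tropical modification. The hypothesis that $\{b\}$ is a flat and $b$ is not a coloop of $M$ puts us in the setting of Proposition \ref{modification}: $\pi_b$ is the modification of $\trop(M\setminus b)$ along a piecewise-linear function $\varphi_b$ whose divisor is $\trop(M/b)$. By Lemma \ref{pull-backmodification}, $\pi_b^*D$ equals the modification $\widetilde{D}$ of $D$ along $\varphi_b$, uniquely characterized by the two conditions $\pi_{b*}\widetilde{D} = D$ and $\widetilde{D}^{\cap b} = \trop(M/b)\cdot D$. Pushing $\widetilde{D}$ forward along $\pi_a$, the fact that $\pi_a$ does not touch the $b$-coordinate together with the commutativity of the diagram give
\[
q_{b*}(\pi_{a*}\widetilde{D}) = q_{a*}D, \qquad (\pi_{a*}\widetilde{D})^{\cap b} = q_{a*}\bigl(\trop(M/b)\cdot D\bigr).
\]
By the projection formula for $q_a$ combined with the matroid identity $(M\setminus a)/b = (M/b)\setminus a$, one expects $q_{a*}(\trop(M/b)\cdot D) = \trop((M\setminus a)/b)\cdot q_{a*}D$; when $q_b$ is itself a modification, these two properties characterize $\pi_{a*}\widetilde{D}$ as the modification of $q_{a*}D$ along the corresponding function on $\trop(M\setminus\{a,b\})$, i.e.\ as $q_b^*q_{a*}D$, and uniqueness of the modification completes the base change.

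The main obstacle is handling the degenerate cases in which one of the auxiliary projections $q_a$, $q_b$ fails to be a modification; this occurs precisely when $a$ (resp.\ $b$) becomes a coloop of $M\setminus b$ (resp.\ $M\setminus a$). In such a case the corresponding tropical matroid variety splits as a Cartesian product with a free $\R$-factor, so the pull-back operator reduces to crossing with $\R$; the base change identity should still hold via a direct computation exploiting this product structure, essentially because the modification function $\varphi_b$ becomes trivial along the additional $\R$-factor after the coordinate drop.
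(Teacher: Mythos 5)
Your reduction hinges on the base change identity $\pi_{a\ast}\pi_b^{\ast}D=q_b^{\ast}q_{a\ast}D$ for arbitrary cycles $D$ in $\trop(M\setminus b)$, and this identity is false; the gap is not in the degenerate cases you flag but already in the generic situation. The step that breaks is $q_{a\ast}\bigl(\trop(M/b)\cdot D\bigr)=\trop((M\setminus a)/b)\cdot q_{a\ast}D$: to get this from the projection formula you would need $q_a^{\ast}\trop((M\setminus a)/b)=\trop(M/b)$, i.e.\ that the modification function $\varphi$ of $\pi_b$ coincides (up to a linear function) with the pull-back along $q_a$ of the modification function $\psi$ of $q_b$. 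It does not: on a cone $\langle\G\rangle$ of $\mf(M\setminus b)$ one has $\varphi=x_p$ with $p\in G_{z+1}\setminus G_z$, where $z$ is the level at which $b$ enters the $M$-closure, and this layer may consist of $a$ alone, so $\varphi$ genuinely depends on $x_a$ while $q_a^{\ast}\psi$ does not. Concretely, let $M$ be the cycle matroid of $K_4$ with $a=12$, $b=13$; then $a,b$ are not coloops of $M$, $\{b\}$ is a flat, $a$ is not a coloop of $M\setminus b$ and $b$ is not a coloop of $M\setminus a$, so none of your degenerate cases occurs. For $p=\alpha V_{\{23\}}+\beta V_{\{12,23\}}+\gamma V_{E\setminus b}\in\trop(M\setminus b)$ one computes $\varphi(p)=-\beta-\gamma$ but $\psi(q_a(p))=-\gamma$, because $b\in\cl_M(\{12,23\})$ while $b\notin\cl_{M\setminus a}(\{23\})$. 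Taking $D=\{p\}$ with $\beta>0$, lemma \ref{pull-backmodification} shows that $\pi_{a\ast}\pi_b^{\ast}D$ and $q_b^{\ast}q_{a\ast}D$ are single points of weight $1$ whose $b$-coordinates are $\varphi(p)$ resp.\ $\psi(q_a(p))$; these differ, so base change fails.

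Moreover, the strategy cannot be repaired by using only the downstairs consequence $q_{a\ast}\pi_{b\ast}C=0$: even the weaker implication ``$q_{a\ast}D=0\Rightarrow\pi_{a\ast}\pi_b^{\ast}D=0$'' is false, as one sees from $D=\{p\}-\{p'\}$ with $p,p'$ as above lying in the same fibre of $q_a$ but with different values of $\beta$; then $\pi_{a\ast}\pi_b^{\ast}D$ is a nonzero difference of two points, since $\varphi$ separates $p$ and $p'$. So the hypothesis $\pi_{a\ast}C=0$ must be exploited upstairs, on $C$ itself: it forces every cell of a suitable polyhedral structure of $C$ to lie in a cone $\langle\F\rangle$ of $\mf(M)$ with $F_{i+1}=F_i\cup a$ for some $i$, and the substantial point of the paper's proof is the combinatorial claim that on the $\pi_b$-images of exactly these cones the modification function $\varphi$ does not depend on $x_a$ (this is where the hypothesis that $\{b\}$ is a flat enters). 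That independence is what makes the relevant cells of $\pi_b^{\ast}\pi_{b\ast}C$ degenerate under $\pi_a$, together with the vanishing of ${\pi_{\{a,b\}}}_{\ast}C$, and it is precisely the ingredient your formal argument omits; without it the lemma does not follow.
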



\begin{proof}
We choose a polyhedral structure $\curlyc$ of $C$ which is compatible with pushing forward (cf.\ \cite[lemma 1.3.4]{disshannes}) such that every cell of $\curlyc$ is contained in a cone of $\bfan$. As ${\pi_a}_{\ast}C=0$ we know that every cell of $\curlyc$ is contained in a cone $\langle\F\rangle$ of $\bfan$ satisfying $F_{i+1}=F_i\cup a$ for some $i$ (as on the other facets, $\pi_a$ is one-to-one
and cannot delete non-zero cells of $\curlyc$). In order to simplify the notations we assume that $b=|E(M)|$. Let $\varphi$ be the piecewise linear function on $\mf(M\setminus b)$ which satisfies for all flats $F$ of $M\setminus b$ that 
\[\varphi(V_F)=\begin{cases} 
-1, & \text{ if } b\in\cl_M(F) \\ 
      0,  & \text{ else} 
\end{cases}.  \]
It follows from proposition \ref{modification} that $\trop(M)$ is the modification of  $\trop(M\setminus b)$ along the rational function $\varphi$. Hence $\pi_b^{\ast}{\pi_b}_{\ast} C$ is the modification of ${\pi_b}_{\ast} C$ along $\varphi$ (cf.\ lemma \ref{pull-backmodification}).
 It is easy to see that $\varphi$ is given on a cone $\langle\G \rangle$ of $\mf(M\setminus b)$ by
\[\varphi_{\mid \langle\G\rangle}(x_1,\ldots,x_{b-1})=x_p, \text{ with } p\in G_{z+1}\setminus G_z \text{ and } z \text{ s.t. } b\in\cl_M(G_{z+1})\setminus \cl_M(G_{z}).\]
We claim that for a chain of flats $\F$ in $M$ satisfying $F_{i+1}=F_i\cup a$ for some $i$, the restriction of $\varphi$ to $\langle \F\setminus b\rangle$ does not depend on $x_a$: Assume the contrary is true; then our description of $\varphi$ implies that \[b\in\cl_M(F_{i+1}\setminus b) \text{ and } b\notin\cl_M(F_{i}\setminus b).\]
Note that $\cl_M(F_{i+1}\setminus b)\subseteq F_{i+1}$ and $\cl_M(F_{i}\setminus b)\subseteq F_{i}$; thus $b\in F_{i+1}$. As $F_{i+1}=F_{i}\cup a$, this implies that $b\in F_{i}$. It follows that $\cl_M(F_{i}\setminus b)=F_{i}\setminus b$. Now $F_{i}$ and $\cl_M(F_{i}\setminus b \cup a)=F_{i+1}$ are both minimal flats containing the flat $F_{i}\setminus b$. But this is a contradiction since $F_{i}\subsetneq F_{i+1}$.

Let $\sigma$ be a maximal cell of $\pi_b^{\ast}{\pi_b}_{\ast} \curlyc$ of the form $(\id\times\varphi)(\pi_b(\tau))$, where $\tau$ is a maximal cell of $\curlyc$. 
We can assume that the restriction of $\pi_a$ to $\sigma$ is injective (otherwise $\pi_a(\sigma)$ does not contribute to the push-forward). Since $\varphi_{\mid\pi_b(\tau)}$ does not depend on the $a$-th coordinate, we can conclude that $\alpha:=\pi_{\{a,b\}}(\sigma)$ has the same dimension as $\sigma$. Let $\sigma_1,\ldots,\sigma_p$ be the cells of $\curlyc$ mapped to $\alpha$ by $\pi_{\{a,b\}}$. As $\pi_b$ is injective on $\sigma_i$, the cell $\sigma_i$ turns into the cell 
\[\widetilde{\sigma_i}:=\{(x_1,\ldots,x_{b-1},\varphi(x_1,\ldots,x_{b-1})): \exists \  x_b: (x_1,\ldots,x_b)\in\sigma_i\}\]
in the cycle $\pi_b^{\ast}{\pi_b}_{\ast}C$. The $\widetilde{\sigma_i}$ are exactly the cells of $\pi_b^{\ast}{\pi_b}_{\ast} \curlyc$ mapped to $\pi_a(\sigma)$ by $\pi_a$. 
Since ${\pi_{\{a,b\}}}_{\ast}C=0$, we can conclude that $\pi_a(\sigma)$ has weight $0$ in ${\pi_a}_{\ast} \pi_b^{\ast}{\pi_b}_{\ast} \curlyc$.\\
Now, the claim follows from the balancing condition.
\end{proof}

\begin{lemma}
\label{pia2}
Let $C$ be a subcycle of a matroid variety $\trop(M)$. Assume that $\trop(M)\neq\R^{|E(M)|}$ and that $\{a\}$ is a flat for every $a\in E(M)$. If ${\pi_a}_{\ast}(C) = 0$ for all $a\in E(M)$ which are not coloops of $M$, then $A=0$.
\end{lemma}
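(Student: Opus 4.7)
The approach is induction on $|E(M)|$. The core idea is first to upgrade the hypothesis from non-coloops to \emph{all} elements of $E(M)$, and then close the argument with a dimension count on the cones of $\mf(M)$.

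I would start by showing, for each coloop $c \in E(M)$, that $\pi_{c,\ast}C = 0$. Since $\pi_a$ and $\pi_c$ project away different coordinates, they commute, so $\pi_{a,\ast}(\pi_{c,\ast}C) = \pi_{c,\ast}(\pi_{a,\ast}C) = 0$ for every non-coloop $a$ of $M$. Because removing a coloop does not change the coloop status of the other elements, the non-coloops of $M\setminus c$ coincide with those of $M$; moreover, $M\setminus c$ is still simple (deletion preserves singleton flats), and it is non-free (otherwise $M = (M\setminus c)\oplus\{c\}$ would itself be free). The cycle $\pi_{c,\ast}C \subseteq \trop(M\setminus c)$ therefore satisfies the hypothesis of the present lemma for the strictly smaller matroid $M\setminus c$, and the inductive hypothesis yields $\pi_{c,\ast}C = 0$. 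Applying this to every coloop in turn gives $\pi_{b,\ast}C = 0$ for all $b \in E(M)$.

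For the dimension count, I would choose a polyhedral structure $\curlyc$ of $C$ that is compatible with push-forwards and refines $\mf(M)$. Suppose, toward a contradiction, that $\sigma$ is a top-dimensional cell of $\curlyc$ with $w(\sigma)\neq 0$, and let $\tau = \langle\F\rangle$ be the unique cone of $\mf(M)$ whose relative interior contains that of $\sigma$. By the argument used in the proof of Lemma \ref{pia1}, combined with the uniqueness of lifts from Lemma \ref{geometric}, if $\pi_b$ is injective on $\tau$ (i.e.\ $\{b\}$ is not a singleton jump of $\F$) then the contribution of $\sigma$ to $\pi_b(\sigma)$ in $\pi_{b,\ast}C$ cannot be cancelled, forcing $w(\sigma)=0$, a contradiction. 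Hence $\pi_b$ must fail to be injective on $\tau$ for \emph{every} $b \in E(M)$, which means every $b$ is a singleton jump of $\F$. This gives $|E(M)|\leq |S(\F)|\leq |\F|\leq \rank(M)$, contradicting $\rank(M) < |E(M)|$ (which follows from $\trop(M)\neq\R^{|E(M)|}$). Therefore $C = 0$.

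The main obstacle is the ``no-cancellation'' step in the dimension count: the uniqueness of lifts in Lemma \ref{geometric} is established at the level of maximal cones of $\mf(M\setminus b)$, so the argument is cleanest when $\sigma$ is top-dimensional in $\trop(M)$. When $\dim C < \rank M$, one needs to work more carefully in neighborhoods of maximal cones containing $\tau$ as a face, using the compatibility of $\curlyc$ with push-forwards to control any other preimages of $\pi_b(\sigma)$. The base of the induction is automatic: if $M$ has no coloops, then the upgrading step is vacuous and the dimension count applies directly, since $|E(M)|$ non-coloops cannot all be singleton jumps of a chain of length at most $\rank(M) < |E(M)|$.
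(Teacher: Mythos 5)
Your overall strategy (project along coordinates, use injectivity of $\pi_b$ on the cone of $\mf(M)$ containing a cell, and count singleton layers of the chain to contradict $\rank(M)<|E(M)|$) is the same as the paper's, and your coloop upgrade by induction on $|E(M)|$ is correct but superfluous: for any cone $\langle\F\rangle$ of $\mf(M)$ one can pass to a maximal superchain, which (since $M$ is not free) has a jump of size at least two; any $a$ in that jump is automatically a non-coloop and $\pi_a$ is injective on $\langle\F\rangle$, so the hypothesis for non-coloops already suffices.

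The genuine gap is exactly the step you flag as "the main obstacle": the claim that if $\pi_b$ is injective on the cone $\tau$ containing the cell $\sigma$, then the contribution of $\sigma$ to ${\pi_b}_{\ast}C$ cannot be cancelled. This is unjustified (and, as a general statement, false) when $\tau$ is not a maximal cone of $\mf(M)$: cells of $\curlyc$ lying in maximal cones adjacent to $\tau$ (or in other cones whose $\pi_b$-image contains $\pi_b(\tau)$) can map onto $\pi_b(\sigma)$ with the same dimension and cancel its weight, and the uniqueness of lifts in lemma \ref{geometric} only controls maximal cones lying over maximal cones, so it gives no control over these contributions. Your suggestion to "work more carefully in neighborhoods of maximal cones containing $\tau$ as a face" is precisely the missing argument, and it is supplied in the paper by a descending induction on the codimension of the cone of $\mf(M)$ containing the cell: first one kills all cells whose interior lies in the interior of a facet of $\mf(M)$ (there injectivity plus the unique lift of maximal chains excludes cancellation); then, for a cell in the interior of a codimension-one cone $\langle\G\rangle$, one observes that the only cells that could map over $\pi_a(\sigma)$ lie in $\langle\G\rangle$ or in adjacent facets, and the latter already carry weight zero by the previous step, so $\omega_C(\sigma)=0$; and one continues cone-codimension by cone-codimension. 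Without this ordering (or an equivalent mechanism ruling out cancellation from higher-dimensional adjacent cones), your dimension count does not go through; note also that your "automatic" base case (no coloops) relies on the same unproved no-cancellation claim, so it inherits the gap rather than avoiding it.
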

\begin{proof}
We choose a polyhedral structure $\curlyc$ of $C$ such that every cell of $\curlyc$ is contained in a cone of $\bfan$. Let $\F=(\emptyset\subsetneq F_1\subsetneq\ldots\subsetneq F_{\rank(M)-1}\subsetneq E(M))$ be an arbitrary maximal chain of flats of $M$. We choose $i$ such that $|F_{i+1}\setminus F_i|>1$ and $a\in F_{i+1}\setminus F_i$. The maximality of $\F$ implies that $a$ is not a coloop. As $\pi_a$ is generically one-to-one (lemma \ref{geometric}) and its restriction to $\langle\F\rangle$ is injective, ${\pi_a}_{\ast}C=0$ implies that there is no cell $\sigma\in\curlyc$ whose interior is contained in the interior of $\langle\F\rangle$. \\
Now we assume there is a cell $\sigma$ of $\curlyc$ whose interior is contained in the interior of a codimension $1$ cone $\langle\G\rangle$ of
$\bfan$. Let $\F=(\emptyset\subsetneq F_1\subsetneq\ldots\subsetneq F_{\rank(M)-1}\subsetneq E(M))$ be a maximal superchain (of flats) of $\G$. As before we choose $a\in F_{i+1}\setminus F_i$, with $i$ satisfying $|F_{i+1}\setminus F_i|>1$. Only cells of $\curlyc$ contained in $\langle\G\rangle$ or a facet adjacent to $\langle\G\rangle$ can potentially be mapped to $\pi_a(\sigma)$ by $\pi_a$. The first part of the proof thus implies that
\[0=\omega_{{\pi_a}_{\ast}C}(\pi_a(\sigma))=\omega_C(\sigma).\]
Continuing this way, we see that $C=0$.
\end{proof}

\begin{theorem}
\label{ratequivbm}
Every subcycle $C$ of a variety $\trop(M)/L$ is rationally 
equivalent to its recession cycle $\delta(C)$.
\end{theorem}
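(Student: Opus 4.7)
By Proposition \ref{ratequivlinspace} it is enough to prove the statement on $\trop(M)$ itself, applied to $\qqq^{-1}(C)$, since $\qqq^{-1}$ commutes with taking recession cycles. Moreover, using additivity of $\delta$ and the fact that $\delta$ is idempotent, the claim is equivalent to showing: \emph{every cycle $F$ in $\trop(M)$ with $\delta(F) = 0$ is rationally equivalent to zero}. (Apply this to $F := \qqq^{-1}C - \delta(\qqq^{-1}C)$.) We proceed by induction on $|E(M)|$. The base case is $\trop(M) = \R^{|E(M)|}$ (that is, $M = U_{n,n}$), which is the theorem of Allermann-Rau \cite{AR2}. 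For the inductive step we may assume, after possibly dividing out the extra lineality coming from parallel elements (again via Proposition \ref{ratequivlinspace}), that every singleton $\{a\}$ is a flat; combined with $\trop(M) \neq \R^{|E(M)|}$ this places us in the hypotheses of Lemmas \ref{pia1} and \ref{pia2}, and ensures that at least one $a \in E(M)$ is not a coloop.

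Now fix an enumeration $a_1, \ldots, a_k$ of all non-coloops of $M$ and build a sequence $F_0 := F, F_1, \ldots, F_k$ with $F_i \sim F_{i-1}$ and ${\pi_{a_j}}_\ast F_i = 0$ for every $j \le i$. Given $F_{i-1}$, note that $\delta(F_{i-1}) = 0$ (preserved through the construction, see below), hence $\delta({\pi_{a_i}}_\ast F_{i-1}) = {\pi_{a_i}}_\ast \delta(F_{i-1}) = 0$. Applying the induction hypothesis on the smaller matroid variety $\trop(M \setminus a_i)$ gives ${\pi_{a_i}}_\ast F_{i-1} \sim 0$ in $\trop(M \setminus a_i)$, and by Remark \ref{easyremark} its pull-back satisfies $\pi_{a_i}^\ast{\pi_{a_i}}_\ast F_{i-1} \sim 0$ in $\trop(M)$. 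Define $F_i := F_{i-1} - \pi_{a_i}^\ast{\pi_{a_i}}_\ast F_{i-1}$. Then $F_i \sim F_{i-1}$; Remark \ref{f_*f^*} yields ${\pi_{a_i}}_\ast F_i = 0$; and for $j < i$, Lemma \ref{pia1} (applied with $a = a_j$, $b = a_i$, $C = F_{i-1}$, using ${\pi_{a_j}}_\ast F_{i-1} = 0$) forces ${\pi_{a_j}}_\ast \pi_{a_i}^\ast {\pi_{a_i}}_\ast F_{i-1} = 0$, so ${\pi_{a_j}}_\ast F_i = 0$ as well.

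After all $k$ iterations, $F_k \sim F$ and ${\pi_a}_\ast F_k = 0$ for every non-coloop $a$. Lemma \ref{pia2} then forces $F_k = 0$, so $F \sim 0$, i.e.\ $C \sim \delta(C)$, completing the induction.

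The principal technical obstacle is the preservation of the invariant $\delta(F_i) = 0$ under the construction, which amounts to verifying that $\delta(\pi_a^\ast E) = \pi_a^\ast \delta(E)$ for any cycle $E$ in $\trop(M\setminus a)$. This in turn follows from Lemma \ref{pull-backmodification}, which identifies the pull-back with a modification along the piecewise linear function $\varphi$ of Proposition \ref{modification}, together with the elementary fact that modification commutes with taking recession cycles (as $\varphi$ depends only on combinatorial data of the flats). The remaining reductions, to the case without parallel elements and to cycles in $\trop(M)$ rather than $\trop(M)/L$, are straightforward applications of Proposition \ref{ratequivlinspace}.
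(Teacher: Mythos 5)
Your argument is essentially the paper's: the same reduction via proposition \ref{ratequivlinspace}, the same induction that deletes non-coloops one at a time, the same use of lemmas \ref{pia1} and \ref{pia2}, and the same base case $\trop(M)=\R^n$ from \cite{AR2}. The only structural difference is bookkeeping: you subtract the recession cycle at the outset and maintain the invariant $\delta(F_i)=0$, whereas the paper carries the accumulated fan pieces along (its cycles $F_i$) and only at the end identifies the resulting fan cycle with $\delta(C)$ using the uniqueness of the fan representative of a rational equivalence class in $\R^n$ (\cite{AR2}). Your variant is viable, but it costs you two commutation statements that the paper's bookkeeping avoids: $\delta({\pi_a}_\ast E)={\pi_a}_\ast\delta(E)$, which you use silently, and $\delta(\pi_a^\ast E)=\pi_a^\ast\delta(E)$, which you flag and sketch via lemma \ref{pull-backmodification}. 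Both are true (the modification function $\varphi$ is conewise linear on $\mf(M\setminus a)$, so graphs, divisors and completions are compatible with passing to recession cycles), but they genuinely need proof or a precise reference; the paper only needs the much weaker observation that $\pi_i^\ast$ of a fan cycle is a fan cycle.

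One step is wrong as stated: parallel elements do not produce ``extra lineality'', so proposition \ref{ratequivlinspace} cannot be used to remove them. By lemma \ref{connectedcomponents} the lineality space of $\trop(M)$ has dimension equal to the number of connected components of $M$; adding an element parallel to an element of, say, $U_{2,3}$ keeps the matroid connected and the lineality space equal to $\R\cdot(1,\ldots,1)$, yet the new singleton is not a flat. The correct (and easy) reduction, which the paper uses in its last line, is that the deletion projection $\pi_R:\trop(M)\rightarrow\trop(M\setminus R)$ with $R=\cl_M(\{a\})\setminus a$ is an isomorphism, and isomorphisms respect both rational equivalence and recession cycles. With that repair, and with the two commutation facts above properly justified, your proof goes through.
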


\begin{proof}
By proposition \ref{ratequivlinspace} it suffices to show the statement for 
matroid varieties $\trop(M)$.

We first consider the case where $\{a\}$ is a flat for every $a\in E(M)$. We use induction on the codimension of $\trop(M)$: The induction start ($\trop(M)=\R^n$) was proved in \cite[theorem 7]{AR2}. We show that $C$ is rationally equivalent on $\trop(M)$ to a fan cycle: After renaming the elements, we can assume that $\{1,\ldots,k\}$ is the subset of elements of $E(M)$ which are not coloops. For $i\in\{1,\ldots,k\}$ we set
\[C_0:=C, \ \ \ C_i:=C_{i-1}-\pi_i^{\ast}({\pi_i}_{\ast}C_{i-1}-\delta({\pi_i}_{\ast}C_{i-1})).\]
By induction ${\pi_i}_{\ast}C_{i-1}$ is rationally equivalent to $\delta({\pi_i}_{\ast}C_{i-1})$. As pulling back preserves rational equivalence, it follows that $C_i$ is rationally equivalent to $C_{i-1}$. We set
\[N_0:=C, \ \ \ N_i:=N_{i-1}-\pi_i^{\ast}{\pi_i}_{\ast}N_{i-1},\]
and
\[ F_0:=0, \ \ \ F_i:=F_{i-1}+\pi_i^{\ast}\delta({\pi_i}_{\ast}N_{i-1}).\]
It is easy to see that for all $i$ the cycle $F_i$ is a fan cycle, $C_i=N_i+F_i$, and ${\pi_i}_{\ast}N_i=0$. Lemma \ref{pia1} implies that ${\pi_i}_{\ast}N_{k}=0$ for all $i$; thus $N_{k}=0$ by lemma \ref{pia2}. Therefore, $C$ is rationally equivalent to the fan cycle $F_{k}$. As $\delta(C)$ is the only fan cycle which is rationally equivalent to $C$ on $\R^n$ \cite[lemma 6, theorem 7]{AR2}, we can conclude $F_{k}=\delta(C)$.\\
The general case follows from the observation that the projection $\pi_R:\trop(M)\rightarrow \trop(M\setminus R)$ is an isomorphism for $R=\cl_M(\{a\})\setminus a$.
\end{proof}

\begin{bibdiv}
\begin{biblist}

\bib{AR}{article}{
  author={Allermann, Lars},
  author={Rau, Johannes},
  title={First steps in tropical intersection theory},
  journal={Math.\ Z.},
  volume={264},
  number={3},
  pages={633--670},
  year={2010},
  eprint={\arxiv {0709.3705v3}},
}

\bib{AR2}{article}{
  author={Allermann, Lars},
  author={Rau, Johannes},
  title={Tropical rational equivalence on $\R ^r$},
  eprint={\arxiv {0811.2860v2}},
}

\bib{lars}{article}{
  author={Allermann, Lars},
  title={Tropical intersection products on smooth varieties},
  journal={to appear in Journal of the EMS},
  eprint={\arxiv {0904.2693v2}},
}

\bib{disslars}{thesis}{
  author={Allermann, Lars},
  title={Tropical intersection theory},
  type={Ph.D.\ thesis},
  organization={Technische Universit\"{a}t Kaiserslautern},
  date={2010},
  eprint={\href {http://kluedo.ub.uni-kl.de/volltexte/2010/2469/}{http://kluedo.ub.uni-kl.de/volltexte/2010/2469}},
}

\bib{ardila}{article}{
  author={Ardila, Federico},
  author={Klivans, Caroline J.},
  title={The Bergman complex of a matroid and phylogenetic trees},
  journal={J. Comb. Theory, Ser. B},
  volume={96},
  date={2006},
  pages={38--49},
  eprint={\arxiv {math/0311370v2}},
  number={1},
}

\bib{FS}{article}{
  author={Feichtner, Eva Maria},
  author={Sturmfels, Bernd},
  title={Matroid polytopes, nested sets and Bergman fans},
  journal={Port. Math. (N.S.)},
  volume={62},
  date={2005},
  number={4},
  pages={437--468},
  eprint={\arxiv {math/0411260}},
}

\bib{Fink}{article}{
  author={Fink, Alex},
  title={Tropical cycles and Chow polytopes},
  eprint={\arxiv {1001.4784}},
}

\bib{GKM}{article}{
  author={Gathmann, Andreas},
  author={Kerber, Michael},
  author={Markwig, Hannah},
  title={Tropical fans and the moduli spaces of tropical curves},
  journal={Compos.\ Math.},
  volume={145},
  number={1},
  pages={173--195},
  year={2009},
  eprint={\arxiv {0708.2268}},
}

\bib{MAppl}{article}{
  author={Mikhalkin, Grigory},
  title={Tropical geometry and its applications},
  conference={ title={International Congress of Mathematicians. Vol. II}, },
  book={ publisher={Eur. Math. Soc., Z\"urich}, },
  date={2006},
  pages={827--852},
  eprint={\arxiv {math/0601041}},
}

\bib{matroidtheory}{book}{
  author={Oxley, James G.},
  title={Matroid theory},
  publisher={Oxford University Press},
  address={Oxford},
  pages={532 p.},
  date={1992},
}

\bib{disshannes}{thesis}{
  author={Rau, Johannes},
  title={Tropical intersection theory and gravitational descendants},
  type={Ph.D.\ thesis},
  organization={Technische Universit\"{a}t Kaiserslautern},
  date={2009},
  eprint={\href {http://kluedo.ub.uni-kl.de/volltexte/2009/2370/}{http://kluedo.ub.uni-kl.de/volltexte/2009/2370}},
}

\bib{kristin}{article}{
  author={Shaw, Kristin M.},
  title={A tropical intersection product in matroidal fans},
  eprint={\arxiv {1010.3967}},
}

\bib{Speyer}{article}{
  author={Speyer, David E.},
  title={Tropical linear spaces},
  journal={SIAM J. Discrete Math.},
  volume={22},
  date={2008},
  number={4},
  pages={1527--1558},
  eprint={\arxiv {math/0410455}},
}

\bib{Sturmfels}{book}{
  author={Sturmfels, Bernd},
  title={Solving systems of polynomial equations},
  series={CBMS Regional Conference Series in Mathematics},
  volume={97},
  publisher={Published for the Conference Board of the Mathematical Sciences, Washington, DC},
  date={2002},
  pages={viii+152},
}

\bib{White}{book}{
  author={White, Neil},
  title={Theory of Matroids},
  series={Encyclopedia Math.\ Appl.},
  volume={26},
  publisher={Cambridge University Press},
  address={London},
  date={1986},
}

\end{biblist}
\end{bibdiv}

\end {document}